\newcommand{\plotptradius}{4pt}
\newcommand{\setplotptradius}[1]{\renewcommand{\plotptradius}{#1}}
\tikzset{permpt/.style={circle, draw, fill=black, inner sep=0pt, minimum width=\plotptradius}}
\tikzset{empty/.style={draw=none, fill=none}}
\newcommand\absdot[2]{
	\node[permpt] at #1 {};
}
\newcommand{\plotperm}[2][black]{ 
	\foreach \j [count=\i] in {#2} {
        \ifnum0=\j {} \else {
 		\node[permpt,fill=#1,draw=#1] (\j) at (\i,\j) {};
	} \fi
	}
}
\newcommand{\plotpermbox}[4]{
	\draw [darkgray, very thick, line cap=round, fill=white]
		({#1-0.5}, {#2-0.5}) rectangle ({#3+0.5}, {#4+0.5});
}
\newcommand{\plotpermborder}[1]{
	\foreach \i [count=\nn] in {#1} {\global\let\n\nn}    
	\plotpermbox{1}{1}{\n}{\n}
	\plotperm{#1}
}
\newcommand{\plotpermbordergrid}[1]{
	\foreach \i [count=\nn] in {#1} {\global\let\n\nn}    
	\plotpermbox{1}{1}{\n}{\n}
	\draw[step=1cm,gray!50,very thin] (1,1) grid (\n,\n);
	\plotperm{#1}
}
\newcommand{\plotgrid}[1]{
	\draw[step=1cm,gray!50,very thin] (0.5,0.5) grid (#1+0.5,#1+0.5);
}
\newcommand{\plotpermgrid}[1]{
	\foreach \i [count=\nn] in {#1} {\global\let\n\nn}    
	\plotgrid{\n}
	\plotperm{#1}
}
\newcommand{\plotpinsequence}[1]{
	\absdot{(0,0)}{};
	\edef\n{0}
	\edef\s{0}
	\edef\e{0}
	\edef\w{0}
	\edef\x{0}
	\edef\y{0}
	\foreach \pin [remember=\pin as \oldpin (initially 1), count=\i] in {#1} {
		\ifthenelse{\pin=1 \OR \pin=2}{
			\ifthenelse{\oldpin=3}{
				\xdef\x{\number\numexpr\e-1}
			}{
				\xdef\x{\number\numexpr\w+1}
			}
			\ifnum\i=1 
				\pgfmathparse{\e+1}
 				\xdef\e{\pgfmathresult}
			\fi	
		}{ 
			\ifthenelse{\oldpin=1}{
				\xdef\y{\number\numexpr\n-1}
			}{
				\xdef\y{\number\numexpr\s+1}
			}
			\ifnum\i=1 
				\pgfmathparse{\s-1}
 				\xdef\s{\pgfmathresult}
			\fi	
		}
		\ifnum\pin=1 
			\pgfmathparse{\n+2}
 			\xdef\n{\pgfmathresult}		
			\absdot{(\x,\n)}{};
			\ifnum\i>1
				\draw (\x,\n) -- (\x,\y-0.5);
			\else
				\draw[gray,very thick] (-0.5,-0.5) rectangle (\x+0.5,\n+0.5);
			\fi
		\fi
		\ifnum\pin=2 
			\pgfmathparse{\s-2}
 			\xdef\s{\pgfmathresult}
			\absdot{(\x,\s)}{};
			\ifnum\i>1
				\draw (\x,\s) -- (\x,\y+0.5);
			\else
				\draw[gray,very thick] (-0.5,0.5) rectangle (\x+0.5,\s-0.5);
			\fi
		\fi
		\ifnum\pin=3 
			\pgfmathparse{\e+2}
 			\xdef\e{\pgfmathresult}
			\absdot{(\e,\y)}{};
			\ifnum\i>1
				\draw (\e,\y) -- (\x-0.5,\y);
			\else
				\draw[gray,very thick] (-0.5,+0.5) rectangle (\e+0.5,\y-0.5);
			\fi
		\fi
		\ifnum\pin=4 
			\pgfmathparse{\w-2}
 			\xdef\w{\pgfmathresult}
			\absdot{(\w,\y)}{};
			\ifnum\i>1
				\draw (\w,\y) -- (\x+0.5,\y);
			\else
				\draw[gray,very thick] (0.5,0.5) rectangle (\w-0.5,\y-0.5);

			\fi
		\fi		
	};
}
\tikzset{
  on each segment/.style={
    decorate,
    decoration={
      show path construction,
      moveto code={},
      lineto code={
        \path [#1]
        (\tikzinputsegmentfirst) -- (\tikzinputsegmentlast);
      },
      curveto code={
        \path [#1] (\tikzinputsegmentfirst)
        .. controls
        (\tikzinputsegmentsupporta) and (\tikzinputsegmentsupportb)
        ..
        (\tikzinputsegmentlast);
      },
      closepath code={
        \path [#1]
        (\tikzinputsegmentfirst) -- (\tikzinputsegmentlast);
      },
    },
  },
  mid arrow/.style={postaction={decorate,decoration={
        markings,
        mark=at position .5 with {\arrow[#1]{stealth}}
      }}},
}
\newtheorem{thm}{Theorem}[section]
\newtheorem{prop}[thm]{Proposition}
\newtheorem{cor}[thm]{Corollary}
\newtheorem{lemma}[thm]{Lemma}
\newtheorem{obs}[thm]{Observation}
\newcommand{\AAA}{\mathcal{A}}
\newcommand{\CCC}{\mathcal{C}}
\let\C\CCC
\newcommand{\DDD}{\mathcal{D}}
\newcommand{\LLL}{\mathcal{L}}
\newcommand{\MMM}{\mathcal{M}}
\newcommand{\XXX}{\mathcal{X}}
\newcommand{\gr}{\mathrm{gr}}
\newcommand{\ugr}{\overline{\gr}}
\newcommand{\lgr}{\underline{\gr}}
\newcommand{\gridded}{\sharp}
\newcommand{\sfl}{\mathsf{l}}
\newcommand{\sfr}{\mathsf{r}}
\newcommand{\sfu}{\mathsf{u}}
\newcommand{\sfd}{\mathsf{d}}
\newcommand{\sflu}{\sfl_\sfu}
\newcommand{\sfru}{\sfr_\sfu}
\newcommand{\sfld}{\sfl_\sfd}
\newcommand{\sfrd}{\sfr_\sfd}
\newcommand{\sful}{\sfu_\sfl}
\newcommand{\sfdl}{\sfd_\sfl}
\newcommand{\sfur}{\sfu_\sfr}
\newcommand{\sfdr}{\sfd_\sfr}
\tikzstyle{vertex}=[circle, draw, fill=black,
\newcommand{\smallgrid}{\setplotptradius{2pt}\draw (-.5,0)--++(1,0) (0,-.5)--++(0,1);}
\newcommand{\sept}{\tikz[scale=.5,baseline=-3pt]{\smallgrid\node[permpt] at (.25,-.25) {};}}
\newcommand{\swpt}{\tikz[scale=.5,baseline=-3pt]{\smallgrid\node[permpt] at (-.25,-.25) {};}}
\newcommand{\nept}{\tikz[scale=.5,baseline=-3pt]{\smallgrid\node[permpt] at (.25,.25) {};}}
\newcommand{\nwpt}{\tikz[scale=.5,baseline=-3pt]{\smallgrid\node[permpt] at (-.25,.25) {};}}
\newcommand{\netwo}{\tikz[scale=.5,baseline=-3pt]{\smallgrid\node[permpt] at (.15,.35) {};\node[permpt] at (.35,.15) {};}}
\newcommand{\nwtwo}{\tikz[scale=.5,baseline=-3pt]{\smallgrid\node[permpt] at (-.15,.35) {};\node[permpt] at (-.35,.15) {};}}
\newcommand{\swtwo}{\tikz[scale=.5,baseline=-3pt]{\smallgrid\node[permpt] at (-.15,-.35) {};\node[permpt] at (-.35,-.15) {};}}
\newcommand{\nthree}{\tikz[scale=.5,baseline=-3pt]{\smallgrid\node[permpt] at (.15,.35) {};\node[permpt] at (.35,.15) {};\node[permpt] at (-.25,.25) {};}}
\newcommand{\wthree}{\tikz[scale=.5,baseline=-3pt]{\smallgrid\node[permpt] at (-.15,-.35) {};\node[permpt] at (-.35,-.15) {};\node[permpt] at (-.25,.25) {};}}
\newcommand{\nwfive}{\tikz[scale=.5,baseline=-3pt]{\smallgrid\node[permpt] at (.15,.35) {};\node[permpt] at (.35,.15) {};\node[permpt] at (-.15,-.35) {};\node[permpt] at (-.35,-.15) {};\node[permpt] at (-.25,.25) {};}}
\newcommand{\plottwocellperm}[2][black]{ 
    \foreach \i [count=\nn] in {#2} {\global\let\n\nn};
	\foreach \j [count=\i] in {#2} { 
        \ifnum0=\j {%
          \draw (\i,.5) -- ++(0,\n-1);   
        } \else {%
 		  \node[permpt,fill=#1,draw=#1] (\j) at (\i,\j) {};
	    } \fi
	};
}
\newcommand{\twocell}[1]{\tikz[scale=.4,baseline=-3pt]{\setplotptradius{2pt}
\foreach \i [count=\nn] in {#1} {\global\let\n\nn};
\begin{scope}[yscale=1/(\n-1),xscale=1/3]
	\foreach \j [count=\i] in {#1} { 
        \ifnum0=\j {%
          \draw (\i,.5-\n/2) -- ++(0,\n-1);   
        } \else {%
 		  \node[permpt] (\j) at (\i,\j-\n/2) {};
	    } \fi
	};
\end{scope}
}}
\title{Pin classes II: Small pin classes}
\author[Robert Brignall and Ben Jarvis]{Robert Brignall \and Ben Jarvis\thanks{Supported by the Engineering and Physical Sciences Research Council, UK [EP/V520147/1]}}
\affiliation{School of Mathematics and Statistics, The Open University,  Milton Keynes, UK}
\keywords{permutation classes, pin permutation, growth rate, generating function}
\begin{document}
\publicationdata{vol. 27:1, Permutation Patterns 2024}{2026}{11}{10.46298/dmtcs.14896}{2024-12-05; 2024-12-05; 2025-10-09}{2026-03-26}

\maketitle

\begin{abstract}
Pin permutations play an important role in the structural study of permutation classes, most notably in relation to simple permutations and well-quasi-ordering, and in enumerative consequences arising from these. In this paper, we continue our study of \emph{pin classes}, which are permutation classes that comprise all the finite subpermutations contained in an infinite pin permutation. We show that there is a phase transition  at $\mu\approx 3.28277$: there are uncountably many different pin classes whose growth rate is equal to $\mu$, yet only countably many below $\mu$. Furthermore, by showing that all pin classes with growth rate less than $\mu$ are essentially defined by pin permutations that possess a periodic structure, we classify the set of growth rates of pin classes up to $\mu$.
\end{abstract}

%
%
%
%
%
%
%
%
\section{Introduction}

This paper continues the study of pin classes started in~\cite{jarvis:pin-classes-i}. Here, we investigate the spectrum of `small' growth rates of such classes, up to growth rate $\mu\approx 3.28277$. This work follows the line of enquiry undertaken by several papers -- for example~\cite{Huczynska2006,kaiser:on-growth-rates:} and \cite{vatter:every-growth-rate:,vatter:small-permutati:,vatter:growth-rates-of:} -- which explored the spectrum of growth rates in general permutation classes. Of special note are~\cite{pantone:growth-rates:} who established a complete characterisation of the real numbers up to $\xi\approx 2.30522$ that are permutation class growth rates, and~\cite{bevan:intervals} who showed that every real number greater than $\lambda\approx 2.35526$ is the growth rate of some permutation class. 

Many of the features of the spectrum of growth rates up to $\xi$ and from $\lambda$ are driven by a single underpinning object: the \emph{oscillations}. These objects govern several phase transitions in the possible growth rates and number of distinct classes, most notably at $\kappa\approx 2.20557$ (the largest real root of $z^3-2z^2-1$) which represents the smallest growth rate for which there exist uncountably many distinct permutation classes -- see~\cite{vatter:small-permutati:}. 

Oscillations are one type of \emph{pin permutation} (for full definitions, see Section~\ref{sec-preliminaries}), which were originally introduced by~\citet*{brignall:decomposing-sim:} as a tool in a Ramsey-theoretic study of simple permutations. Since then, they have been studied from a number of viewpoints:~\citet*{bassino:a-polynomial-al:b} describe an efficient decision procedure to determine whether a given permutation class contains arbitrarily long pin permutations, while~\citet*{bassino:enumeration-of-:} showed that the permutation class formed of all pin permutations and their subpermutations has a rational generating function. More recently,~\cite{bv:wqo-uncountable:} used pin permutations to construct uncountably many well-quasi-ordered permutation classes with distinct enumerations, thereby disproving a conjecture of~\cite{vatter:survey}. Finally,~\citet*{chudnovsky:unavoidable-induced:} use the term \emph{chain} to describe an analogous object in the study of hereditary classes of graphs.

Given an (infinite) pin permutation, the \emph{pin class} is formed by taking all finite subpermutations that are contained in it.  In the previous paper by~\cite{jarvis:pin-classes-i}, it was shown (among other results) that pin classes always have growth rates. In this paper, we consider the pin classes with smallest growth rate, and establish a phase transition at growth rate $\mu  \approx 3.28277$, the largest real zero of $z^5-4z^4+3z^3-2z^2-z+2$. Specifically:

\paragraph{Theorem~\ref{thm-at-mu}} \emph{There are uncountably many pin classes $\C$ with growth rate equal to $\mu$.}
	
\paragraph{Theorem~\ref{thm-classification}} \emph{Any pin class $\C$ for which $\gr(\C)<\mu$ is defined by a pin permutation whose structure is ultimately periodic, and has growth rate equal either to $\kappa$ or to $\nu_{\ell}$ for some $\ell \geq 1$, defined to be the largest real zero of 
	\[
		z^{2\ell}-4z^{2\ell-1}+3z^{2\ell-2}-2z^{2\ell-3}-z^{2\ell-4}+2z^{2\ell-5}+1.
	\]}

It has been known for some time that $\kappa$ is the least growth rate of a pin class (thanks to the appearance of infinite oscillations established by~\cite{vatter:small-permutati:}). One interesting observation arising from our work is that the second least growth rate of a pin class is not reached until $\nu=\nu_{1}\approx 3.06918$. Also of note is that, among the uncountable collection of pin classes identified that have growth rate $\mu\approx 3.28277$, there exist classes whose defining pin sequence is not eventually recurrent (Proposition~\ref{prop-liouville-V}), even though all pin classes with smaller growth rates must be defined by a periodic or eventually periodic sequence.

The rest of this paper is organised as follows. In Section~\ref{sec-preliminaries}, we cover some basic definitions and results, including those concerning growth rates, the `box sum' decomposition, pin sequences and pin permutations, and the combinatorics of words. Section~\ref{sec-pin-class-structure} develops some theory for pin permutations and begins the process of restricting which pin permutations need to be considered when working below growth rate $\mu$. Sections~\ref{sec-two-quadrants} and~\ref{sec-classification} together fully classify the pin classes whose growth rate is less than $\mu$, while the intervening Section~\ref{sec-non-periodic} exhibits various pin classes that occur at growth rate $\mu$.

%
%
%
%
%
\section{Preliminaries}\label{sec-preliminaries}

We refer the reader to~\cite{bevan2015defs} for background information and basic definitions concerning permutations and permutation classes. 

The following definition is derived from the study of permutation classes, but here note that we present it in fuller generality. Let $(s_n)$ be a sequence of non-negative integers. The \emph{upper} and \emph{lower growth rates} are defined, respectively, to be 
\[
\ugr((s_n)) = \limsup_{n\to\infty}\sqrt[n]{s_n}, \quad\text{ and }\quad \lgr((s_n))=\liminf_{n\to\infty}\sqrt[n]{s_n},
\]
when these quantities exist. If $\ugr((s_n))=\lgr((s_n))$, then the sequence has an actual \emph{growth rate}, typically denoted $\gr((s_n))$.

We will frequently abuse this notation, for example if we are given some collection $\C$ of combinatorial objects, then $\ugr(\C)$ and $\lgr(\C)$ are defined using the sequence $(|\C_n|)$ that counts the number of objects in $\C$ of each size $n$. If $\C$ is a proper permutation class, for example, then this recovers the standard notions of growth rates for permutation classes. Similarly, if we are given a generating function $f(z)$, then we can also use the term $\gr(f(z))$ to refer to the growth rate of the sequence of coefficients in the Taylor expansion of $f(z)$, or, equivalently (by standard analytic combinatorics), the reciprocal of the radius of convergence of the formal power series $f(z)$.

Recall that the \emph{plot} of a permutation $\pi$ is the points $(i,\pi(i))$ in the Euclidean plane. Consider a set $\LLL$ of horizontal and vertical lines drawn in this plane -- we refer to this as a \emph{gridded plane}. For a permutation $\pi$ and a collection $\LLL$ of lines defining a gridded plane, the \emph{gridded permutation} $\pi^\gridded$ is the pair $(\pi,\LLL)$. Such an object is typically regarded as a division of the plot of $\pi$ into a grid of rectangular cells.

For enumerative purposes, the length of a gridded permutation is the same as its underlying ungridded version. However, two griddings $\pi^\gridded$ and $\pi^\natural$ of the same underlying permutation $\pi$ are only regarded as equal if they have the same number of vertical and horizontal lines, and if each rectangular cell of one gridding contains exactly the same points of $\pi$ as the corresponding rectangular cell of the other. 

Given a permutation class $\C$ and fixed non-negative integers $k,\ell$, we then let
\[\C^\gridded = \{(\pi,\LLL)\ :\ \pi\in\C\text{ and }\LLL\text{ has exactly }k\text{ horizontal and }\ell\text{ vertical lines}\},\] 
that is, the collection of all permutations in $\C$, each divided in every possible way using the prescribed number of lines. Clearly every permutation in $\C$ gives rise to multiple elements of $\C^\gridded$, but in the context of growth rates this presents no problem, as the following result shows. 

\begin{lemma}[C.f.~\cite{vatter:small-permutati:} Proposition 2.1]\label{lem-gr-gridded}
	Let $\C$ be a permutation class, and let $\C^\gridded$ denote the set of gridded permutations with some fixed numbers $k$ of horizontal and $\ell$ of vertical lines.  
	Then $\ugr(\C) = \ugr(\C^\gridded)$ and $\lgr(\C) = \lgr(\C^\gridded)$. 
\end{lemma}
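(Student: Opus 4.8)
The plan is to establish the two inequalities in each direction and observe that they hold simultaneously for $\ugr$ and $\lgr$. One direction is essentially trivial: since every gridded permutation $(\pi,\LLL)\in\C^\gridded$ has a well-defined underlying permutation $\pi\in\C$ of the same length, the map $(\pi,\LLL)\mapsto\pi$ shows that $|\C_n| \leq |\C^\gridded_n|$ for every $n$, whence $\ugr(\C)\le\ugr(\C^\gridded)$ and $\lgr(\C)\le\lgr(\C^\gridded)$ directly from the definitions of $\limsup$ and $\liminf$ of $n$th roots.

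**The reverse inequality via counting griddings.**

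For the reverse direction, I would bound the number of ways a single permutation of length $n$ can be gridded with $k$ horizontal and $\ell$ vertical lines. A horizontal line sits in one of $n+1$ `slots' between consecutive values (or at the extremes), and similarly a vertical line sits in one of $n+1$ slots between consecutive positions; allowing repeated lines if necessary, the number of choices is at most $(n+1)^{k}\cdot(n+1)^{\ell} = (n+1)^{k+\ell}$. Hence $|\C^\gridded_n| \leq (n+1)^{k+\ell}\,|\C_n|$. Taking $n$th roots gives $\sqrt[n]{|\C^\gridded_n|} \leq (n+1)^{(k+\ell)/n}\sqrt[n]{|\C_n|}$, and since $(n+1)^{(k+\ell)/n}\to 1$ as $n\to\infty$ (because $k$ and $\ell$ are fixed), this polynomial factor does not affect either the $\limsup$ or the $\liminf$. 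Therefore $\ugr(\C^\gridded)\le\ugr(\C)$ and $\lgr(\C^\gridded)\le\lgr(\C)$.

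**Conclusion.**

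Combining the two directions yields $\ugr(\C) = \ugr(\C^\gridded)$ and $\lgr(\C) = \lgr(\C^\gridded)$, as claimed; in particular if $\C$ has an actual growth rate then so does $\C^\gridded$, with the same value. The only point requiring a little care is the bookkeeping in the counting step — precisely how many distinct griddings of a fixed permutation are possible, and whether one wishes to allow lines to coincide or pass through points — but any crude bound polynomial in $n$ suffices, since the whole argument turns on the fact that $p(n)^{1/n}\to 1$ for any fixed polynomial $p$. I expect this counting bound to be the only real content of the proof; everything else is unwinding the definition of upper and lower growth rate.
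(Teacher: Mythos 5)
Your proof is correct and takes essentially the same approach as the paper: count the griddings of each fixed permutation and observe that this count is polynomial in $n$, so its $n$th root tends to $1$. The paper streamlines the argument slightly by noting the \emph{exact} count $|\C^\gridded_n| = \binom{n+k}{k}\binom{n+\ell}{\ell}\,|\C_n|$, which yields both inequalities at once, whereas you handle the two directions separately with the map $(\pi,\LLL)\mapsto\pi$ and the cruder bound $(n+1)^{k+\ell}$; both are fine since any polynomial bound suffices.
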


Since the wording of the above lemma differs somewhat from its other appearances in the literature, we provide the short proof. 

\begin{proof}
For a permutation of length $n$, there are precisely $\binom{n+k}{k}$ distinct ways to place the $k$ horizontal lines, and $\binom{n+\ell}{\ell}$ ways to place the vertical ones. Thus, each $\pi\in\CCC$ gives rise to precisely $P(n) = \binom{n+k}{k}\binom{n+\ell}{\ell}$ different gridded permutations in $\C^\gridded$, and so
\[
|\C^\gridded_n| = P(n)|\C_n|,
\]
where $\C_n$ denotes the set of permutations of length $n$ in $\C$. The lemma now follows by noting that the function $P(n)$ is a polynomial of degree $k+\ell$, and hence $\sqrt[n]{P(n)}\to 1$ as $n\to\infty$.
\end{proof}

The above lemma guarantees that, as far as  asymptotic enumeration is concerned, we can move freely between gridded and ungridded permutations. We would like to work \emph{exclusively} with gridded permutations, and for this we need to extend the standard notion of permutation containment (see Bevan~\cite{bevan2015defs} for a formal definition): given two gridded permutations $\sigma^\gridded$ and $\pi^\gridded$ which possess the same number of horizontal and vertical lines, we say that $\sigma^\gridded\leq\pi^\gridded$ if there is an injection $f$ that embeds $\sigma$ into $\pi$ so that $\sigma^\gridded$ is equal (as a gridded permutation) to the image $f(\sigma)$ in the gridded plane containing $\pi$.

While we have defined the set $\C^\gridded$ to comprise \emph{all} possible griddings of permutations in $\C$, we are typically only interested in sets made up of particular griddings: providing every $\pi\in\C$ possesses at least one gridded version in this set, then the upper and lower growth rates of the set will again coincide with those of $\C$.

\paragraph{Pin permutations}
The \emph{rectangular hull} of a set of points in the plot of a permutation (or gridded permutation) is the smallest axis-parallel rectangle that contains them. Let $(p_1,\dots,p_k)$ be a sequence of points in the plane (or in a gridded plane). A \emph{proper pin} for $(p_1,\dots,p_k)$ is a point $p$ that lies outside the rectangular hull of $\{p_1,\dots,p_k\}$, but which lies horizontally or vertically between $\{p_1,\dots,p_{k-1}\}$ and $p_k$. The position of the pin $p$ relative to the rectangular hull of $\{p_1,\dots,p_k\}$ naturally leads us to describe a proper pin as \emph{left}, \emph{right}, \emph{up} or \emph{down}.

A \emph{ pin permutation} is a permutation defined by a sequence of points $(p_1,\dots,p_k)$ for which each $p_i$ ($i\geq 3$) is a proper pin for $(p_1,\dots,p_{i-1})$. This definition implicitly assumes that the first two points $p_1$ and $p_2$ are specified in advance, and for our purposes it suffices to consider pin permutations initiated in the following way. We work in the $2\times 2$ gridded plane -- that is, the plane equipped with one horizontal and one vertical line, and we can take these lines to be the $x$- and $y$-axes, respectively (so that the intersection of the two lines is at the origin). Each of the four regions of our grid, therefore, corresponds to a quadrant, and these are numbered 1 to 4 in the usual manner:
\[
\begin{array}{c|c}
2&1 \\\hline	
3&4 \\
\end{array}.
\]
From now on, we will be working exclusively with permutations that are gridded in this $2\times 2$ grid. Note that~\cite{jarvis:pin-classes-i} works instead with the notion of a \emph{centered} permutation, which is a permutation equipped with an additional coordinate. There is a clear equivalence between these and $2\times 2$ gridded permutations, in which the additional coordinate of a centered permutation corresponds to the origin of a $2\times 2$ gridded permutation. In this paper, either viewpoint can be taken, and in any case we will drop the $\gridded$ superscript.

To construct a pin permutation $(p_1,p_2,\dots)$ in the $2\times 2$ grid, we use the following procedure: point $p_1$ can be placed in any of the four quadrants, and point $p_2$ is then placed so that it is a proper pin for $\{(0,0),p_1\}$. We can thus assign the point $p_2$ with one of the same four directions as subsequent pins. Subsequent points are then added according to the rules defined above, with the additional requirement that the origin $(0,0)$ belongs to each rectangular hull. In other words, we treat the origin $(0,0)$ as a `$0$th' pin. 

Pin permutations necessarily alternate their direction: if $(p_1,p_2,\dots)$ is a pin permutation and $p_i$ is an up or down (respectively, left or right) pin, then $p_{i+1}$ must be a left or right (respectively, up or down) pin in order to ensure that it separates $p_i$ from all earlier pins.  Note further that for $i>2$ the quadrant containing $p_i$ is determined by the directions of $p_{i-1}$ and $p_i$, whereas the quadrant containing $p_2$ is determined by the direction of $p_2$, and the quadrant containing $p_1$.

We note here two specific families of pin permutations: an \emph{oscillation} is a pin permutation for which each pin has one of only two directions: if these directions are right/up or down/left, then we have an \emph{increasing oscillation}, otherwise we have a \emph{decreasing oscillation}.

Given the way that a pin's direction defines its role in the construction of a pin permutation, it is natural to encode pin permutations using words over a finite alphabet. We will call the encoding that has been used most frequently in the past the \emph{basic encoding}; this encoding uses the alphabet $\{\sfl,\sfr,\sfu,\sfd\}$ to encode each pin $p_i$ for $i\geq 2$, together with a \emph{quadrant letter} from $\{1,2,3,4\}$ to encode the position of $p_1$. In this way, a pin permutation $\pi$ of length $n$ corresponds to a word $e(\pi)$ of length $n$ in the language defined by the following regular expression:
\[\{1,2,3,4\}\{\varepsilon,\sfu,\sfd\}(\{\sfl,\sfr\}\{\sfu,\sfd\})^*\cup \{1,2,3,4\}\{\varepsilon,\sfl,\sfr\}(\{\sfu,\sfd\}\{\sfl,\sfr\})^*\]
whee $\varepsilon$ denotes the empty word.

In this paper we find a slightly different encoding to be more convenient. This encoding, which we call the \emph{memory encoding}, has its origins in~\cite{bv:wqo-uncountable:} and also uses an eight-letter alphabet. However, in the memory encoding each letter also records the direction of the \emph{previous} pin. The alphabet is $\AAA_m = \{\sflu,\sfld,\sfru,\sfrd,\sful,\sfur,\sfdl,\sfdr\}$, with the convention that the letter $\sfr_\sfu$, for example, encodes a right pin that follows an up pin. Note too that the pin $p_1$ can also be encoded with precisely one of these eight letters depending upon which quadrant contains $p_1$, and in which direction (left/right or up/down) the next pin $p_2$ lies. For example, if $p_1$ is in quadrant 1 and $p_2$ is an up or down pin, then $p_1$ is encoded by the letter $\sfr_\sfu$. With the memory encoding, a pin permutation $\pi$ corresponds to a word $m(\pi)$ over $\AAA_m$ that is accepted by the following finite state automaton. In this automaton all states are both start and accept states, and all transitions inherit the label of their target state.

{\centering
\begin{tikzpicture}[scale=.55]
	\node[circle,draw,thick,inner sep=0pt] (ru) at (1,1) {\strut$\sfru$};
	\node[circle,draw,thick,inner sep=0pt] (ur) at (2.5,2.5) {\strut$\sfur$};
	\node[circle,draw,thick,inner sep=0pt] (dr) at (1,-1) {\strut$\sfdr$};
	\node[circle,draw,thick,inner sep=0pt] (rd) at (2.5,-2.5) {\strut$\sfrd$};
	\node[circle,draw,thick,inner sep=0pt] (ul) at (-1,1) {\strut$\sful$};
	\node[circle,draw,thick,inner sep=0pt] (lu) at (-2.5,2.5) {\strut$\sflu$};
	\node[circle,draw,thick,inner sep=0pt] (ld) at (-1,-1) {\strut$\sfld$};
	\node[circle,draw,thick,inner sep=0pt] (dl) at (-2.5,-2.5) {\strut$\sfdl$};

\draw[thick,->] (ru) edge[out=35,in=-125] (ur) (ur) edge[out=-145,in=55] (ru)
(dr) edge[out=-55,in=145] (rd) (rd) edge[out=125,in=-35] (dr)
(dl) edge[out=35,in=-125] (ld) (ld) edge[out=-145,in=55] (dl)
(lu) edge[out=-55,in=145] (ul) (ul) edge[out=125,in=-35] (lu);
\draw[thick,->] (ru) edge (dr) (dr) edge (ld) (ld) edge (ul) (ul) edge (ru)
	(ur) edge (lu) (lu) edge (dl) (dl) edge (rd) (rd) edge (ur);
\end{tikzpicture}\par
}
We call any word $w$ accepted by this automaton a \emph{pin word}. There is a one-to-one correspondence between pin words and pin permutations: one direction is provided by the memory encoding, while the other direction follows by placing pins according to the directions dictated in the memory encoding. Much of our later work will involve constructing the (gridded) pin permutation $\pi_w$ from a given pin word $w$.

For an example of a pin permutation and its two encodings, see Figure~\ref{fig-pin-example}.

\begin{figure}
\centering
\begin{tikzpicture}[scale=.25]
 	\plotpermgrid{12,2,5,13,8,11,9,6,10,4,7,15,3,1,14}
	\draw (7,8) -- (8)-| (11) |- (10) -| (6) |- (7) -| (4) |- (5) -| (13) |- (12) -| (2) |- (3) -| (15) |- (14) -| (1); 
	\draw[very thick] (0.5,7.5) -- ++(15,0);
	\draw[very thick] (6.5,0.5) -- ++(0,15);
\end{tikzpicture}\par 
	\caption{A pin permutation whose basic encoding is the word $1\sfl\sfu\sfr\sfd\sfr\sfd\sfl\sfu\sfl\sfd\sfr\sfu\sfr\sfd$, and whose memory encoding is $\sfur\sflu\sful\sfru\sfdr\sfrd\sfdr\sfld\sful\sflu\sfdl\sfrd\sfur\sfru\sfdr$.}\label{fig-pin-example}
\end{figure}

The memory encoding is more amenable than the basic encoding to handling subpermutations of pin permutations. For example, consider the permutation $\pi=4731526$, with basic encoding $e(\pi)=1\sfu\sfr\sfu\sfl\sfu\sfr$ and memory encoding $m(\pi)=\sfru\sfur\sfru\sfur\sflu\sful\sfru$. The permutation $25314$ can be obtained by deleting the first two points of the sequence, and this can be witnessed in the memory encoding simply by deleting the first two letters of $m(\pi)$. On the other hand, to obtain the basic encoding of $25314$ from $e(\pi)$, one must delete the first two letters and replace the third with an appropriate numeral. This complication can be handled by a transducer (as used in~\citet*{brignall:simple-permutat:b}), but here we have chosen to avoid this.

Note that in the memory encoding, if the predecessor letter of some given letter is known, then the subscript on the given letter is redundant. For ease of presentation, we will frequently exploit this fact and suppress the subscripts used in the memory encoding except where this redundancy does not occur. 

Now let $w=w_1w_2\cdots$ be an \emph{infinite} pin word (or a \emph{pin sequence}). The infinite (gridded) permutation $\pi_w$ can be constructed in the same way as for finite permutations. The \emph{pin class} corresponding to $\pi_w$ consists of all the finite subpermutations of $\pi_w$:
\[
\C_w = \{ \sigma : \sigma \leq \pi_w\}.
\]
Note here that we are considering containment $\leq$ of \emph{gridded} permutations, as defined earlier. Note further that, although we will be working exclusively with gridded classes $\C_w$, the `ungridded' versions of such classes form permutation classes, and of course both the gridded and un-gridded versions have the same growth rate by Lemma~\ref{lem-gr-gridded}.

\paragraph{Box sums} A widely-used tool in the study of permutation classes is the notion of direct (and skew) sums. Here, we will use the same generalisation of this notion to the $2\times 2$ grid as introduced by~\cite{jarvis:pin-classes-i}. 
Let $\sigma$ and $\tau$ be two permutations in the $2\times 2$ gridded plane. The \emph{box sum} of $\sigma$ and $\tau$, denoted $\sigma \boxplus  \tau$, is the gridded permutation of length $|\sigma|+|\tau|$ formed by inserting a copy of $\sigma$ at the origin of a copy of $\tau$. If we denote the points of $\sigma$ and $\tau$ in quadrant $i=1,2,3,4$ by $\sigma_i$ and $\tau_i$, then the box sum can be viewed graphically as follows.
\[
\begin{tikzpicture}[scale=0.2, baseline=-3pt]
	\draw (-3,0) -- ++ (6,0) (0,-3)--++(0,6);
	\foreach \x/\y [count=\n] in {1.5/1.5,-1.5/1.5, -1.5/-1.5, 1.5/-1.5}
		\node at (\x,\y) {$\sigma_\n$};
\end{tikzpicture}
\ \boxplus\ 
\begin{tikzpicture}[scale=0.2, baseline=-3pt]
	\draw (-3,0) -- ++ (6,0) (0,-3)--++(0,6);
	\foreach \x/\y [count=\n] in {1.5/1.5,-1.5/1.5, -1.5/-1.5, 1.5/-1.5}
		\node at (\x,\y) {$\tau_\n$};
\end{tikzpicture}
\ =\ 
\begin{tikzpicture}[scale=0.2, baseline=-3pt]
	\draw[dashed,gray] (-6,-3) -- ++ (12,0) (-6,3) -- ++ (12,0) (-3,-6)--++(0,12) (3,-6)--++(0,12);
	\draw (-6,0) -- ++ (12,0) (0,-6)--++(0,12);
	\foreach \x/\y [count=\n] in {1.5/1.5,-1.5/1.5, -1.5/-1.5, 1.5/-1.5} {
		\node at (\x,\y) {$\sigma_\n$};
		\node at (3*\x,3*\y) {$\tau_\n$};
	}
\end{tikzpicture}
\]
A gridded permutation is \emph{box decomposable} or $\boxplus$-decomposable if it can be expressed as the box sum of two smaller permutations, and \emph{box indecomposable} (or $\boxplus$-indecomposable) otherwise. Graphically, box decomposability can be recognised by a set of points whose rectangular hull includes the origin and has the property that there are no points in the regions above, below, to the left or to the right of this hull. 

As with direct and skew sums, one can decompose a permutation $\pi$ into a $\boxplus$-sum of box indecomposable permutations, and the collection $S$ of indecomposable permutations in this sum is unique. However, unlike direct and skew sums, it is possible for there to be more than one expression for $\pi$ using the collection $S$. For example, the gridded permutation \begin{tikzpicture}[scale=0.08, baseline=-3pt]
	\draw (-3,0) -- ++ (6,0) (0,-3)--++(0,6);
	\smallgrid
	\foreach \x/\y in {-1/-1,2/1,1/2}
		\node[permpt] at (\x,\y) {};
\end{tikzpicture}\ has two expressions as a box sum of \netwo\ and \swpt\ as follows:
\[
\begin{tikzpicture}[scale=0.2, baseline=-3pt]
	\draw (-3,0) -- ++ (6,0) (0,-3)--++(0,6);
	\foreach \x/\y in {-1/-1,2/1,1/2}
		\node[permpt] at (\x,\y) {};
\end{tikzpicture}
\quad =\quad
\begin{tikzpicture}[scale=0.2, baseline=-3pt]
	\draw (-3,0) -- ++ (6,0) (0,-3)--++(0,6);
	\foreach \x/\y in {-1/-1}
		\node[permpt] at (\x,\y) {};
\end{tikzpicture}
\boxplus
\begin{tikzpicture}[scale=0.2, baseline=-3pt]
	\draw (-3,0) -- ++ (6,0) (0,-3)--++(0,6);
	\foreach \x/\y in {2/1,1/2}
		\node[permpt] at (\x,\y) {};
\end{tikzpicture}
\quad =\quad
\begin{tikzpicture}[scale=0.2, baseline=-3pt]
	\draw (-3,0) -- ++ (6,0) (0,-3)--++(0,6);
	\foreach \x/\y in {2/1,1/2}
		\node[permpt] at (\x,\y) {};
\end{tikzpicture}
\boxplus
\begin{tikzpicture}[scale=0.2, baseline=-3pt]
	\draw (-3,0) -- ++ (6,0) (0,-3)--++(0,6);
	\foreach \x/\y in {-1/-1}
		\node[permpt] at (\x,\y) {};
\end{tikzpicture}.
\]
It is clear that a sum of two $\boxplus$-indecomposable permutations, each of which is wholly contained in opposing quadrants, can always be taken in either order. We say that such a pair of $\boxplus$-indecomposable permutations \emph{commute}. In fact, we have:

\begin{obs}[See~\cite{jarvis:pin-classes-i}]
Let $\sigma$ and $\tau$ be a pair of $\boxplus$-indecomposable gridded permutations. Then $\sigma$ and $\tau$ commute if and only if $\sigma=\tau$, or $\sigma$ and $\tau$ are wholly contained in opposing quadrants.
\end{obs}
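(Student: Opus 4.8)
This is an ``if and only if'' statement, and the backward direction is essentially free. If $\sigma=\tau$ there is nothing to prove; if instead $\sigma$ and $\tau$ lie wholly in opposing quadrants, then forming $\sigma\boxplus\tau$ or $\tau\boxplus\sigma$ places the two permutations into disjoint regions of the plane with no interaction, so the two orders produce the same gridded permutation --- this is exactly the remark preceding the statement. So all the work is in the forward direction: assume $\sigma$ and $\tau$ are $\boxplus$-indecomposable with $\sigma\neq\tau$ and $\sigma\boxplus\tau=\tau\boxplus\sigma=:\pi$, and deduce that $\sigma$ and $\tau$ lie wholly in opposing quadrants.

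The expression $\pi=\sigma\boxplus\tau$ exhibits a copy of $\sigma$ occupying a box $B_1$ containing the origin, with the points of $\tau$ pushed into the four open corner regions of $B_1$, so that no point of $\pi$ lies directly above, below, left of, or right of $B_1$; symmetrically, $\pi=\tau\boxplus\sigma$ exhibits a copy of $\tau$ in such a box $B_2$. The crucial first step is to show these two copies of $\sigma$ occupy the \emph{same} set of points of $\pi$, i.e. that the points of $\pi$ in $B_1$ are exactly the points of $\pi$ outside $B_2$. I would argue this from uniqueness of the $\boxplus$-indecomposable decomposition (the observation just stated): any box $B$ containing the origin all of whose edge-strips are empty splits $\pi$ into two $\boxplus$-summands whose combined list of indecomposable blocks is $\{\sigma,\tau\}$, so the points of $\pi$ in $B$ must induce one of $\emptyset$, $\sigma$, $\tau$, or $\pi$. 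Applying this to $B_1\cap B_2$, whose edge-strips are again empty: it cannot induce $\pi$ (too few points), and if it induced $\sigma$ then comparing cardinalities gives $B_1\subseteq B_2$, whereupon restricting $\pi=\tau\boxplus\sigma$ to $B_2$ writes $\tau$ as $\sigma\boxplus(\text{something})$ and contradicts $\boxplus$-indecomposability of $\tau$ unless $\sigma=\tau$; the case of $\tau$ is symmetric. Hence $B_1\cap B_2$ contains no point of $\pi$, and a count ($|\sigma|+|\tau|=|\pi|$) shows the points of $\pi$ in $B_1$ and those in $B_2$ partition $\pi$, as required.

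Let $S$ be this common point set, so $S$ induces $\sigma$ and $\pi\setminus S$ induces $\tau$. Then $S$ is ``inner'' with respect to $\pi=\sigma\boxplus\tau$ but ``outer'' with respect to $\pi=\tau\boxplus\sigma$: in the first, every point of $S$ is closer to both axes than any point of $\pi\setminus S$ in the same quadrant, and in the second the reverse --- which is possible only if $S$ and $\pi\setminus S$ share no quadrant. Similarly, if $S$ has a point in quadrant $2$ and $\pi\setminus S$ a point in the adjacent quadrant $1$, then in one of the two expressions the former point is lower than the latter and in the other it is higher --- a contradiction. So the quadrants occupied by $\sigma$ and by $\tau$ are disjoint and pairwise non-adjacent; since adjacency of the four quadrants forms a $4$-cycle, the only way for both $\sigma$ and $\tau$ to be nonempty is that each is confined to a single quadrant and the two quadrants are opposing, completing the proof.

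The main obstacle is the first step above --- reconciling the two expressions for $\pi$ so as to see that the inserted copy of $\sigma$ sits on literally the same points both times; once that is in hand, the ``inner versus outer'' comparison of the last paragraph is routine. A more computational alternative would instead restrict $\pi$ to each quadrant (where $\boxplus$ degenerates to a direct or a skew sum), use the standard fact that $\alpha\oplus\beta=\beta\oplus\alpha$ forces $\alpha$ and $\beta$ to be $\oplus$-powers of a common permutation, together with the observation that $\boxplus$-indecomposability of $\sigma$ makes each of its single-quadrant restrictions $\oplus$- or $\ominus$-indecomposable, and then dispatch the remaining adjacent- and shared-quadrant configurations by hand; this works but is messier.
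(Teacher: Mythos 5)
Your proof is correct, but it is organised quite differently from the paper's. The paper argues the forward direction in one stroke: if $\sigma$ and $\tau$ are not wholly contained in opposing quadrants, then some point $p$ of $\sigma$ separates the origin from some point $q$ of $\tau$ in $\sigma\boxplus\tau$, while in $\tau\boxplus\sigma$ the roles are reversed, and it then asserts that equality of the two box sums together with $\boxplus$-indecomposability forces $\sigma$ to embed in $\tau$ and vice versa, whence $\sigma=\tau$. You instead assume $\sigma\neq\tau$, use the uniqueness of the multiset of $\boxplus$-indecomposable blocks to show that the inner box $B_1$ of one expression and the inner box $B_2$ of the other carry complementary point sets of $\pi$ (via the analysis of $\pi$ restricted to $B_1\cap B_2$, whose edge-strips you correctly note are empty), and only then run the inner-versus-outer positional comparison to force the occupied quadrants to be disjoint, pairwise non-adjacent, and hence a single opposing pair. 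What your route buys is rigour: the paper's ``we can only have $\sigma\boxplus\tau=\tau\boxplus\sigma$ if $\sigma$ can be embedded in $\tau$, and vice versa'' is left unjustified, whereas your identification of the common point set $S$ makes every subsequent comparison a statement about fixed points of a fixed gridded permutation, so the contradictions in the shared-quadrant and adjacent-quadrant cases are airtight. The cost is length and the reliance on uniqueness of the indecomposable decomposition, which the paper does state immediately beforehand, so you are entitled to it. Your closing remark about the single-quadrant restrictions and commuting direct sums is a viable third route but, as you say, messier; I would keep the argument you gave.
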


To see this, note that if $\sigma$ and $\tau$ are not wholly contained in opposing quadrants, then in $\sigma\boxplus\tau$ there is a point $p$ of $\sigma$ that lies horizontally or vertically between the origin and some point $q$ of $\tau$. Similarly, in $\tau\boxplus\sigma$ the points corresponding to $p$ and $q$ must be in the other order: that is, $q$ separates $p$ from the origin. Since both $\sigma$ and $\tau$ are $\boxplus$-indecomposable, we can only have $\sigma\boxplus\tau = \tau\boxplus\sigma$ if $\sigma$ can be embedded in $\tau$, and vice versa. That is, $\sigma=\tau$.

\paragraph{Box closed classes and enumeration}
A (gridded) class $\C$ is \emph{box closed} (or $\boxplus$-closed) if $\sigma\boxplus\tau\in\C$ for every $\sigma,\tau\in\C$, and the \emph{box closure} of a set $\DDD$ is the smallest box-closed class containing $\DDD$, and is denoted $\bigboxplus \DDD$.  
These concepts are natural extensions of similar ideas for sum and skew closed (ungridded) classes, but whereas for (say) a sum-closed class we can write $f(z)=1/(1-g(z))$ for the relationship between the generating function of the class $f$ and that of the sum-indecomposable permutations $g$, the corresponding expression for box-closed classes would result in overcounting permutations whose decomposition into box indecomposable permutations involves adjacent commuting pairs.

To handle commuting pairs, we require a result originally due to~\cite{cartier:problemes-combi:}, and first applied to permutation classes by~\cite[Lemma 7]{bevan:growth-rates-ggc}. Given a finite alphabet $\AAA$ and a collection of \emph{commutation rules} (that is, unordered pairs of letters that may be read in either order), the \emph{trace monoid} $\MMM(\AAA)$ is the set of equivalence classes of words over $\AAA$, under the equivalence relation determined by the commutation rules.

\begin{lemma}[{\cite{cartier:problemes-combi:}; see also~\cite[Note V.10]{flajolet:analytic-combin:}}]\label{lem-cartier-foata}
Let $\AAA$ be a finite alphabet and let $C$ be a set of commutation rules on elements of $\AAA$.
Then the trace monoid $\MMM(\AAA)$ has generating function
\[
M(z) = \left(\sum_F(-1)^{|F|}z^{|F|}\right)^{-1}
\]
where the sum is over all sets $F$ composed of distinct letters that commute pairwise.
\end{lemma}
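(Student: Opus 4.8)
The plan is to prove the equivalent statement that $M(z)\cdot P(z)=1$, where $P(z)=\sum_F(-1)^{|F|}z^{|F|}$ is the (sign-alternated) clique polynomial appearing in the lemma and $M(z)=\sum_{t\in\MMM(\AAA)}z^{|t|}$ is the length generating function of the trace monoid. Expanding the product, the coefficient of $z^n$ is a signed count of pairs consisting of a trace and a commuting clique:
\[
	[z^n]\,M(z)\,P(z)=\sum_{\substack{(t,F):\ t\in\MMM(\AAA),\ F\text{ a clique}\\ |t|+|F|=n}}(-1)^{|F|},
\]
and the goal is to show this equals $1$ when $n=0$ and $0$ otherwise. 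The case $n=0$ is immediate — the only contribution is the empty trace together with $F=\emptyset$ — so the content is in proving that the signed sum vanishes for $n\geq 1$.

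First I would reorganise the sum according to the trace $s=F\cdot t$ obtained by pre-multiplying $t$ by the (pairwise commuting) letters of $F$. Since trace monoids are cancellative, for a fixed trace $s$ and a fixed clique $F$ that left-divides $s$ there is a unique $t$ with $s=F\cdot t$, and conversely every pair $(t,F)$ arises in this way for $s=F\cdot t$. Hence
\[
	[z^n]\,M(z)\,P(z)=\sum_{\substack{s\in\MMM(\AAA)\\ |s|=n}}\ \ \sum_{\substack{F\text{ a clique}\\ F\text{ left-divides }s}}(-1)^{|F|},
\]
and it suffices to prove the \emph{key identity}: for every trace $s$, the inner sum $\sum_{F}(-1)^{|F|}$ over cliques $F$ left-dividing $s$ equals $1$ if $s$ is the empty trace and $0$ otherwise.

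To prove the key identity, let $I(s)$ be the set of letters that can occur as the first letter of some word representing $s$ — equivalently, the set of labels of the minimal occurrences in the dependence (heap) poset of $s$. I would establish two facts: (i) $I(s)$ is itself a clique, since two distinct minimal occurrences are incomparable in the dependence poset and therefore carry commuting labels (and a repeated letter cannot have two minimal occurrences, a letter being dependent on itself); and (ii) a clique $F$ left-divides $s$ if and only if $F\subseteq I(s)$ — the forward direction by bringing each letter of $F$ to the front in turn, and the reverse direction by deleting, for each letter of $F$, its (unique) minimal occurrence, these occurrences forming an antichain so that the deletion yields a genuine trace. Granting (i) and (ii), every subset of $I(s)$ is a clique left-dividing $s$, so the inner sum is $\sum_{F\subseteq I(s)}(-1)^{|F|}=(1-1)^{|I(s)|}$, which is $1$ when $I(s)=\emptyset$ (that is, when $s$ is the empty trace) and $0$ otherwise. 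Substituting back into the displayed sum completes the proof.

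The main obstacle is the careful verification of facts (i) and (ii): this is exactly where one must use the structure of the trace monoid beyond its being a monoid, namely Levi's lemma, or equivalently the fact that the relative order of any two dependent (non-commuting) letters is an invariant of a trace, so that the dependence poset and the notion of a letter being ``available at the front'' are well defined. An alternative route that avoids the poset machinery is a direct sign-reversing involution in the style of Viennot's heaps of pieces: given a pair $(t,F)$ with $|t|+|F|=n\geq 1$, one toggles whether a distinguished letter — say the largest, in a fixed total order on $\AAA$, among the maximal pieces of the heap of $t$ together with the letters of $F$ — belongs to $F$ or instead sits atop $t$. Here the delicate point is to ensure that the toggled $F$ remains a clique, which is handled by selecting the distinguished letter only among those that commute with every letter of $F$; checking that this refined choice still gives a genuine fixed-point-free involution is the crux of that approach.
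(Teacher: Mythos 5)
The paper states this lemma with a citation to Cartier and Foata and to Note~V.10 of Flajolet and Sedgewick and supplies no proof of it, so there is no in-paper argument to compare against. Your proof is correct, and it is the classical Cartier--Foata (equivalently, Viennot heaps-of-pieces) argument: you verify $M(z)\,P(z)=1$ by reindexing the convolution through the bijection $(t,F)\leftrightarrow(s,F)$ with $s=F\cdot t$, which rests on cancellativity of the trace monoid, and then collapse the inner signed sum to $\sum_{F\subseteq I(s)}(-1)^{|F|}=(1-1)^{|I(s)|}$ by showing that the cliques left-dividing a trace $s$ are precisely the subsets of the clique $I(s)$ of initial letters. Your two supporting facts are both argued soundly: $I(s)$ is a clique because two distinct minimal elements of the heap poset are incomparable and hence carry commuting labels, and no letter can have two minimal occurrences since a letter is dependent on itself; and a clique $F$ left-divides $s$ if and only if $F\subseteq I(s)$, the forward direction by commuting each letter of $F$ to the front and the reverse by removing the antichain of minimal occurrences of letters in $F$, which is a down-set of the heap poset. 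The structural ingredient you flag — that the relative order of any two dependent occurrences is a trace invariant, so the heap poset and the notion of an ``available'' initial letter are well defined — is exactly what makes these steps legitimate, and it is right to call it out. The alternative sign-reversing involution you sketch at the end is a viable variant of the same idea, but as you note requires extra care to keep the toggled $F$ a clique; the route you actually develop is cleaner and suffices.
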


It is possible to associate each letter of $\AAA$ with a polynomial or power series, from which we obtain the following corollary. 

\begin{cor}\label{cor-cartier-foata}
	Let $\AAA$ be a finite alphabet and let $g:\AAA \to\mathbb{Z}[\![z]\!]$ be any mapping. Then the trace monoid $\MMM(\AAA)$ has weighted generating function
	\[
	M_g(z) = \left(\sum_{F} (-1)^{|F|}\prod_{a\in F}g(a)\right)^{-1}
	\]
where the sum is over all sets $F$ of distinct letters that commute pairwise.
\end{cor}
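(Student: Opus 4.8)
The plan is to deduce this from Lemma~\ref{lem-cartier-foata} by a standard ``colouring'' and substitution argument. First I would record the evident multivariate refinement of Lemma~\ref{lem-cartier-foata}: the Cartier--Foata proof goes through verbatim if, instead of weighting every letter of $\AAA$ by the single variable $z$, one assigns to each letter $a\in\AAA$ its own indeterminate $x_a$. This yields
\[
\sum_{t\in\MMM(\AAA)}\ \prod_{a\in\AAA} x_a^{n_a(t)}\ =\ \left(\sum_{F}(-1)^{|F|}\prod_{a\in F}x_a\right)^{-1},
\]
where $n_a(t)$ denotes the number of occurrences of the letter $a$ in any word representing the trace $t$, and the sum on the right is over sets $F$ of pairwise-commuting distinct letters, exactly as in the statement. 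Since $\AAA$ is finite this denominator is a genuine polynomial in the $x_a$ with constant term $1$. (Alternatively one could derive this from Lemma~\ref{lem-cartier-foata} itself by replacing each $a$ by a block of fresh, mutually commuting copies and extracting the appropriate specialisation, but the direct route is cleaner.)

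Next I would substitute $x_a = g(a)$ for each $a\in\AAA$. In every setting where the corollary is applied the ``letters'' encode box-indecomposable permutations of length at least $1$, so each $g(a)$ is a power series with zero constant term and the substitution is admissible in $\mathbb{Z}[\![z]\!]$: the denominator $\sum_F(-1)^{|F|}\prod_{a\in F}g(a)$ still has constant term $1$ (from $F=\emptyset$) and is therefore invertible, while for each $n$ only finitely many traces $t$ contribute to the coefficient of $z^n$ in $\prod_a g(a)^{n_a(t)}$, so the left-hand side specialises to a well-defined element of $\mathbb{Z}[\![z]\!]$. Performing the substitution in the displayed identity then produces precisely the claimed formula for the right-hand side.

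Finally I would check that the specialised left-hand side is indeed the weighted generating function $M_g(z)$ of $\MMM(\AAA)$: a trace $t$ represented by a word $a_1a_2\cdots a_k$ contributes $\prod_{a\in\AAA} g(a)^{n_a(t)} = \prod_{i=1}^{k} g(a_i)$, which is exactly the weight obtained by reading off the power series $g(a_i)$ at each letter and multiplying, and summing over all traces of $\MMM(\AAA)$ gives $M_g(z)$. This completes the argument.

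The only point genuinely requiring care is the legitimacy of the substitution $x_a\mapsto g(a)$ — that it yields a well-defined power series and commutes with the inversion — which is why it is worth flagging explicitly that each $g(a)$ has no constant term; the rest is bookkeeping together with the multivariate form of Cartier--Foata.
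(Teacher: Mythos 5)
Your proof is correct and spells out what the paper only gestures at — the paper offers no proof of this corollary beyond the one-sentence remark that one can ``associate each letter of $\AAA$ with a polynomial or power series,'' which is precisely your multivariate-Cartier--Foata-plus-substitution argument made explicit. You are also right to flag that the substitution $x_a\mapsto g(a)$ is only admissible when each $g(a)$ has zero constant term (otherwise neither side is a well-defined element of $\mathbb{Z}[\![z]\!]$); the paper's hypothesis ``any mapping'' is stated a little loosely, but the condition holds in every application, as you note.
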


In the case of the four-quadrant grids in this paper, we use a five letter alphabet $\{a,\ell_1,\ell_2,\ell_3,\ell_4\}$, and two commutation relations, $\ell_1\ell_3=\ell_3\ell_1$ and $\ell_2\ell_4=\ell_4\ell_2$.
For a $\boxplus$-closed class $\C$, define the function $g:\{a,\ell_1,\ell_2,\ell_3,\ell_4\}\to \mathbb{Q}(z)$ so that $g(\ell_i)$ is the generating function for the $\boxplus$-indecomposable permutations wholly contained in quadrant $i$, and $g(a)$ is the generating function for all the other $\boxplus$-indecomposable permutations. By Corollary~\ref{cor-cartier-foata}, the generating function for $\C$ is
\[
f_\C(z) = \frac1{1 - \left(g(a) + g(\ell_1) + g(\ell_2) + g(\ell_3) + g(\ell_4)\right) + g(\ell_1)g(\ell_3) + g(\ell_2)g(\ell_4)}.
\]
By way of an example, consider $\XXX^\gridded = \bigboxplus\left\{\nept,\,\nwpt,\,\swpt,\,\sept\right\}$. We take $g(a)=0$ and $g(\ell_i)=z$ for $i=1,2,3,4$, which gives us the generating function
\[
f_{\XXX^\gridded}(z) = \frac1{1-4z+2z^2}.
\]
This class is precisely the class of gridded permutations that can be drawn on a geometric `$X$'. The generating function for the ungridded version of $\XXX$ is (see~\cite{waton:on-permutation-:} or~\cite{elizalde:the-x-class-and:})
\[\frac{1-3z}{1-4z+2z^2}.\]
Note that the denominator agrees with that for $\XXX^\gridded$, confirming that $\XXX$ and $\XXX^\gridded$ have the same growth rate as required by Lemma~\ref{lem-gr-gridded}.

We now briefly turn our attention to general theory regarding the growth rates of classes. First, for a permutation class $\C$ that is sum- or skew-closed,~\cite{arratia:on-the-stanley-:} observed that the sequence $(|\C_n|)$ is \emph{supermultiplicative} (that is, $|\C_m||\C_n|\leq|\C_{m+n}|$), and therefore Fekete's lemma implies that $\gr(\C)$ exists. This fact can alternatively be derived from the \emph{supercritical sequence schema} described in Section V.2 of~\cite{flajolet:analytic-combin:}. 

Neither approach works directly for $\boxplus$-closed classes, because of the expressions provided by Lemma~\ref{lem-cartier-foata}: for example, the counting sequence for the $\XXX$ class is not supermultiplicative, nor does it satisfy the positivity condition for the supercritical sequence schema. Nevertheless, it is of course still the case that the geometric `X' has growth rate $1/\rho$, where $z=\rho$ is the smallest real positive solution to $4z-2z^2=1$, since $\rho$ is the unique dominant singularity of $f_{\XXX^\gridded}(z)$. 

Restricting our attention to pin classes, the previous paper established the following result.

\begin{thm}[\cite{jarvis:pin-classes-i}]
For any pin class $\C$, the growth rate $\gr(\C)$ exists.
\end{thm}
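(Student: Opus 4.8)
The plan is to prove that $\gr(\C_w)$ exists, where $\C_w=\{\sigma:\sigma\le\pi_w\}$ for an infinite pin word $w$; write $s_n$ for the number of gridded permutations of length $n$ in $\C_w$. Since $\C_w$ lies inside a proper permutation class, the Marcus--Tardos theorem gives $\ugr(\C_w)=\limsup_n s_n^{1/n}<\infty$, so it suffices to prove $\lgr(\C_w)=\ugr(\C_w)$. No off-the-shelf principle delivers this: as noted above, the Cartier--Foata expression of Lemma~\ref{lem-cartier-foata} does not render even $\boxplus$-closed counting sequences supermultiplicative, and it is not known whether arbitrary $\boxplus$-closed classes have growth rates. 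Instead the plan is to exploit that $\C_w$ arises from the \emph{single} infinite pin permutation $\pi_w$, which lets sub-permutations be combined via $\boxplus$-sums, and then to conclude with a Fekete-type argument.

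The target is a constant $C=C(w)$ with
\[
s_{m+n+C}\ \ge\ s_m\,s_n \qquad\text{for all } m,n\ge 1.
\]
This suffices: by induction $s_{qn+(q-1)C}\ge s_n^{\,q}$, so (using $s_k\ge 1$) $s_N\ge s_n^{\lfloor N/(n+C)\rfloor}$ for all large $N$, whence $\liminf_N s_N^{1/N}\ge s_n^{1/(n+C)}$ for every fixed $n$; on the other hand, picking $N_j\to\infty$ along which $s_{N_j}^{1/N_j}\to\ugr(\C_w)$ also gives $s_{N_j}^{1/(N_j+C)}\to\ugr(\C_w)$, so $\sup_n s_n^{1/(n+C)}\ge\ugr(\C_w)$. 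Chaining these, $\ugr(\C_w)\le\sup_n s_n^{1/(n+C)}\le\liminf_N s_N^{1/N}\le\ugr(\C_w)$, so all are equal and the growth rate exists.

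To obtain the displayed inequality I would establish a \emph{relocation lemma}: there is a constant $C=C(w)$ such that every $\tau\le\pi_w$ extends to some $\tau\le\tau^\ast\le\pi_w$ with $|\tau^\ast|\le|\tau|+C$ that can be realised inside $\pi_w$ using only points $p_j$ with arbitrarily large index. Granting this, fix $\sigma\le\pi_w$ of length $m$, realised (in a canonical way) using the prefix $p_1,\dots,p_k$ of the pin sequence, and take any $\tau\le\pi_w$ of length $n$; relocate $\tau$ past index $k$. By the externality property of pins --- every $p_j$ with $j>k$ lies outside the rectangular hull of $p_1,\dots,p_k$ --- the union of the two realising point sets induces the gridded permutation $\sigma\boxplus\tau^\ast\in\C_w$, of length at most $m+n+C$. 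With canonical realisations throughout (padding $\tau^\ast$ with extra far-out points so that $|\tau^\ast|$ is exactly $n+C$), and using the uniqueness of the decomposition into $\boxplus$-indecomposable permutations to disentangle $\sigma$ from $\tau^\ast$ inside the result, the map $(\sigma,\tau)\mapsto\sigma\boxplus\tau^\ast$ is an injection from the length-$m$ members of $\C_w$ times the length-$n$ members of $\C_w$ into the length-$(m+n+C)$ members of $\C_w$, which yields the inequality.

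The main obstacle is the relocation lemma, and it is here that the fine structure of pin sequences must be used. Decompose $\tau$ into its $\boxplus$-indecomposable pieces. Pieces lying in a quadrant that contains only finitely many points of $\pi_w$ have globally bounded size and are absorbed into $C$. Each remaining piece is supported on a bounded window of the pin sequence --- establishing this itself uses externality, to show that sufficiently large jumps in the pin indices used force a $\boxplus$-decomposition --- and must be relocated to a later window of $\pi_w$ that ``supports'' it; since only a bounded amount of the local geometry of the construction is relevant to embedding a bounded-size configuration, a pigeonhole argument over the finite alphabet $\AAA_m$ produces such later windows, even when $w$ itself is not recurrent. One must also check that the bounded family of relocated pieces can be nested inside $\pi_w$ simultaneously and in a consistent order (again by externality, once they are pushed out far enough), and that the number of pieces and the ``glue'' separating them stay bounded, so that the total overhead remains within $C$. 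Carrying out this analysis, with due care for the degenerate cases (few points in a quadrant, pieces meeting the origin region, commuting $\boxplus$-pairs), is the technical heart of the argument.
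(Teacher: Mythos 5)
Your framework---establish $s_{m+n+C}\ge s_m s_n$ and conclude by a Fekete-type argument, combining two realisations via a box sum---is a natural one, but the relocation lemma at its heart is false, and the failure is exactly the phenomenon that the $\boxplus$-interior machinery in these papers exists to handle. Proposition~\ref{prop-indecomposable-is-factor} shows that a box-indecomposable $\tau\in\C_w$ can be realised in $\pi_w$ only along a consecutive run of pins, i.e.\ via a factor $f$ of $w$ with $\pi_f=\tau$; if every such $f$ occurs only finitely often in $w$ (and in two quadrants Lemma~\ref{lem-two-cell-no-collisions} makes $f$ essentially unique once $|f|\ge 4$), then $\tau$ can be realised only at finitely many, early, indices---and so can \emph{every} $\tau^\ast$ with $\tau\le\tau^\ast$, since any realisation of $\tau^\ast$ contains one of $\tau$. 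The sequence $w^\star=\phi(b^\star)$ from Section~\ref{sec-non-periodic} is an explicit counterexample: every factor of $w^\star$ containing two left pins occurs exactly once, so infinitely many box-indecomposable permutations in $\C_{w^\star}$ admit no far-out realisation and cannot be padded into one. Your ``pigeonhole over $\AAA_m$'' only guarantees that \emph{some} factor of each bounded length recurs; it does nothing to ensure that the particular factor your piece needs does.

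Once the relocation lemma fails, so does the proposed injection $(\sigma,\tau)\mapsto\sigma\boxplus\tau^\ast$: if $\sigma$ and $\tau$ each contain a box-indecomposable piece anchored to the same unique early occurrence in $\pi_w$, the combined pattern has no embedding at all, and no bounded padding repairs the fact that $\C_w$ is not $\boxplus$-closed. The companion paper avoids this by passing to the $\boxplus$-interior $\C_w^\boxplus$, built from recurrent factors only; that class \emph{is} genuinely $\boxplus$-closed (Proposition~\ref{prop-w-recurrent}), and there an argument of your flavour does go through. The substantive work is then the separate comparison $\gr(\C_w)=\gr(\C_w^\boxplus)$ (Theorem~\ref{thm-jarvis-box-interior}), which bounds the contribution of the non-recurrent part of $\C_w$ without ever trying to relocate it. That comparison is the ingredient your plan is missing, and it cannot be replaced by the pigeonhole step you sketch.
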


\paragraph{Combinatorics of Words}
We have already discussed several examples of words over finite alphabets, but here we briefly survey some definitions and results from the standard literature on the combinatorics of words that we will require. Further details about the concepts here can be found in the survey article by~\cite{cassaigne:factor-complexity:}, and references therein.

Given a (finite or infinite) word $w$, a \emph{factor} of $w$ is a contiguous subsequence of $w$. An infinite word $w$ over a finite alphabet is said to be:
\begin{itemize}
\item \emph{periodic} if there exists $k$ such that $w_i = w_{k+i}$ for all $i\geq 1$. The smallest such value of $k$ for which this holds is called the \emph{period} of $w$,
\item \emph{eventually periodic} if $w=uw'$ for some finite word $u$ and periodic word $w'$. The \emph{period} of $w$ is defined to be equal to the period of $w'$.
\item \emph{recurrent} if every finite factor of $w$ appears infinitely often in $w$, and
\item \emph{eventually recurrent} if $w=uw'$ for some finite word $u$ and recurrent word $w'$.	
\end{itemize}
Clearly, every periodic (respectively, eventually periodic) word $w$ is recurrent (resp., eventually recurrent). 

Given an infinite word $w$, the \emph{complexity} of $w$ is the function $p_w(n)$ that counts the number of distinct factors of length $n$ that appear in $w$. Since every factor of $w$ of length $n$ can be extended in at least one way by adding a letter on the right hand end, there is a natural injection from factors of length $n$ to factors of length $n+1$, for all $n$. Consequently, we have.

\begin{prop}[See~\cite{cassaigne:factor-complexity:}]\label{prop-p-increasing}
	For an infinite word $w$, the complexity $p_w$ is increasing.
\end{prop}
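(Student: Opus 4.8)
The plan is to exhibit, for each $n\geq 1$, an explicit injection from the set of length-$n$ factors of $w$ into the set of length-$(n+1)$ factors of $w$; the mere existence of such an injection gives $p_w(n)\leq p_w(n+1)$, and since $n$ is arbitrary this says precisely that $p_w$ is (weakly) increasing.

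First I would fix an infinite word $w=w_1w_2w_3\cdots$ over a finite alphabet together with an integer $n\geq 1$, and take any factor $u$ of $w$ of length $n$, say $u=w_iw_{i+1}\cdots w_{i+n-1}$ for some $i\geq 1$. Because $w$ is infinite, the letter $w_{i+n}$ exists, so $u':=w_iw_{i+1}\cdots w_{i+n-1}w_{i+n}$ is a factor of $w$ of length $n+1$ whose length-$n$ prefix is $u$. I would then define $\phi(u)$ to be one such word $u'$ (choosing arbitrarily among the possible right-extensions if $u$ occurs in several places with different continuations).

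Next I would verify injectivity. Suppose $\phi(u_1)=\phi(u_2)=:v$ for two length-$n$ factors $u_1,u_2$ of $w$. By construction $u_1$ is the length-$n$ prefix of $v$, and so is $u_2$; hence $u_1=u_2$. Thus $\phi$ is an injection from the set of length-$n$ factors of $w$ to the set of length-$(n+1)$ factors, so comparing cardinalities yields $p_w(n)\leq p_w(n+1)$.

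There is essentially no obstacle here: the only place the hypothesis that $w$ is \emph{infinite} (as opposed to merely a long finite word) is used is in guaranteeing the existence of the letter $w_{i+n}$, which ensures that every length-$n$ factor admits a right-extension and hence that $\phi$ is defined on its entire domain. The remainder is a one-line counting argument.
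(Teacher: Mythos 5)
Your proof is correct and is essentially the same argument the paper gives: every length-$n$ factor of an infinite word admits a right-extension to a length-$(n+1)$ factor, and recovering the length-$n$ prefix shows this extension map is injective, so $p_w(n)\leq p_w(n+1)$. Nothing further is needed.
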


In the case of words that are not periodic, we have the following result which dates back to the 1930s.

\begin{thm}[\cite{morse:symbolic-dynami:}]\label{thm-morse-hedlund}
	For an infinite word $w$ that is not periodic or eventually periodic, $p_w$ is strictly increasing.
\end{thm}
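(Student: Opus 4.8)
The statement to prove is: if an infinite word $w$ over a finite alphabet is not periodic or eventually periodic, then $p_w$ is *strictly* increasing. By Proposition~\ref{prop-p-increasing} we already know $p_w$ is (weakly) increasing via the right-extension injection, so the only thing left is to rule out equality $p_w(n) = p_w(n+1)$ for any $n$. The plan is therefore to prove the contrapositive: if $p_w(n) = p_w(n+1)$ for some $n$, then $w$ is eventually periodic.

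Here is the sketch I would carry out. Suppose $p_w(n) = p_w(n+1)$ for some $n \geq 1$. The natural injection from factors of length $n$ to factors of length $n+1$ (sending a length-$n$ factor $u$ occurring in $w$ to the unique... no — rather, each length-$(n+1)$ factor restricts to a length-$n$ prefix, giving a surjection the other way; combined with the injection and finiteness, this map is a bijection). The key consequence: every factor $u$ of $w$ of length $n$ has a \emph{unique} right extension, i.e. there is exactly one letter $a$ such that $ua$ is a factor of $w$. I would establish this carefully: the set of length-$(n+1)$ factors maps onto the set of length-$n$ factors by deleting the last letter, both sets have the same finite cardinality, so this map is a bijection, meaning each length-$n$ factor is the prefix of exactly one length-$(n+1)$ factor. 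Now fix the suffix behaviour of $w$: let $u$ be the factor of length $n$ occurring at some position $i$ of $w$ (say $i$ large enough, or just $i = 1$); since $u$ determines its unique right-extension letter, and that extension's length-$n$ suffix again has a unique right extension, the entire word $w_{i}w_{i+1}w_{i+2}\cdots$ is completely determined by the single length-$n$ factor $w_i \cdots w_{i+n-1}$. Since there are only finitely many factors of length $n$ (at most $p_w(n) < \infty$), among the infinitely many positions $1, 2, 3, \dots$ two positions $i < j$ carry the same length-$n$ factor, say $w_i\cdots w_{i+n-1} = w_j \cdots w_{j+n-1}$. By the determinism just established, $w_{i+k} = w_{j+k}$ for all $k \geq 0$, i.e. $w$ has period $j - i$ from position $i$ onward. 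Hence $w$ is eventually periodic, contradicting the hypothesis. This proves $p_w(n) < p_w(n+1)$ for all $n$.

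The one delicate point — and the main thing to get right rather than the main *obstacle* — is the bijection/determinism step: concluding from $p_w(n) = p_w(n+1)$ that each length-$n$ factor has a unique right extension. One must be slightly careful that the relevant "deletion of last letter" map really is well-defined from length-$(n+1)$ factors to length-$n$ factors and is surjective (each length-$n$ factor, occurring at position $i$, is followed by \emph{some} letter provided $i$ can be taken arbitrarily large — which is fine since $w$ is infinite, so every factor occurs at some position with room to its right). Given surjectivity and equal finite cardinalities, injectivity of this deletion map follows, which is exactly the uniqueness of right extensions. After that, the eventual-periodicity argument is the standard pigeonhole-plus-determinism argument and is routine. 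I would also note explicitly that $p_w(n)$ is finite for every $n$ because the alphabet is finite (at most $|\AAA|^n$ factors of length $n$), which is what makes the pigeonhole step legitimate.
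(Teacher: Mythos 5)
The paper does not prove Theorem~\ref{thm-morse-hedlund}: it is stated as a citation to Morse and Hedlund, and the paper instead proves the adjacent Lemma~\ref{lem-constant-forever}. Your argument is correct and is the standard proof of the Morse--Hedlund theorem via the contrapositive (if $p_w(n)=p_w(n+1)$ for some $n$, then $w$ is eventually periodic). The pivotal observation you isolate --- that equal counts at lengths $n$ and $n+1$ force the last-letter-deletion map (equivalently, the right-extension map) to be a bijection, so each length-$n$ factor extends uniquely to the right --- is precisely what the paper treats as the base case (``$\rho_N$ is a bijection'') in its inductive proof of Lemma~\ref{lem-constant-forever}. From that shared starting point the two arguments diverge: you apply pigeonhole to the infinitely many positions of $w$ to find positions $i<j$ carrying the same length-$n$ window, then read off periodicity from position $i$ onward by the determinism of unique right extension, whereas the paper's lemma instead propagates the bijection inductively to all lengths $k\geq N$ to conclude that $p_w$ is eventually constant. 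Your route is the one that directly delivers the eventual-periodicity conclusion that the theorem needs, and the two points you flag as requiring care --- surjectivity of the deletion map (every length-$n$ factor has an occurrence with room to its right, since $w$ is infinite) and finiteness of $p_w(n)$ (the alphabet is finite, so the pigeonhole step is legitimate) --- are exactly the right ones to make explicit.
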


By contrast, words that are periodic or eventually periodic must have a complexity function that is constant for suitably large $n$. In fact, closer analysis yields the following.

\begin{lemma}[Essentially due to~\cite{morse:symbolic-dynami:}]\label{lem-constant-forever}
For an infinite word $w$ over a finite alphabet, if there exists $N$ such that $p_w(N) = p_w(N+1)$, then $p_w(n) = p_w(N)$ for all $n\geq N$.
\end{lemma}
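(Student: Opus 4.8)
The plan is to prove, by induction on $n \geq N$, the stronger statement that every factor of $w$ of length $n$ has a \emph{unique right extension}: there is exactly one letter $a$ for which $ua$ is again a factor of $w$. Granting this, the natural injection from factors of length $n$ to factors of length $n+1$ used in the discussion preceding Proposition~\ref{prop-p-increasing} becomes a bijection for every $n \geq N$, so $p_w(n+1) = p_w(n)$ for all $n \geq N$, and hence $p_w(n) = p_w(N)$ for all $n \geq N$.

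First I would set up the bookkeeping. Write $F_n$ for the set of factors of $w$ of length $n$, and consider the map $\phi_n \colon F_{n+1} \to F_n$ sending a factor to its prefix of length $n$. This map is surjective because $w$ is infinite: any occurrence $u = w_i\cdots w_{i+n-1}$ of a length-$n$ factor is immediately followed by a letter $w_{i+n}$, so $u$ is the prefix of the length-$(n+1)$ factor $w_i\cdots w_{i+n}$. Thus $p_w(n+1)\geq p_w(n)$ (recovering Proposition~\ref{prop-p-increasing}), and, being a surjection of finite sets of equal size, $\phi_n$ is a bijection precisely when $p_w(n)=p_w(n+1)$. Finally, $\phi_n$ is injective exactly when no two distinct length-$(n+1)$ factors share their length-$n$ prefix, which together with surjectivity says precisely that every factor of length $n$ has a unique right extension. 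So the hypothesis $p_w(N)=p_w(N+1)$ is exactly the base case $n=N$ of the induction.

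For the inductive step, suppose every factor of length $n$ has a unique right extension, and let $v\in F_{n+1}$; I must show $v$ has a unique right extension. Take any occurrence $v = w_i\cdots w_{i+n}$ in $w$ and let $u = w_{i+1}\cdots w_{i+n}$ be the length-$n$ suffix of $v$. Then $u$ is followed in $w$ by the letter $w_{i+n+1}$, and by the inductive hypothesis $w_{i+n+1}$ is the unique right-extension letter of $u$; in particular it depends only on $u$, hence only on $v$. Therefore the letter following $v$ is the same at every occurrence of $v$, so $v$ has a unique right extension. This closes the induction, and the lemma follows as described above.

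The argument is short, and the only point needing care is the inductive step: the upgrade from "the length-$n$ suffix has a unique right extension" to "the full length-$(n+1)$ factor has a unique right extension" is the crux. Infinitude of $w$ is used twice — in the surjectivity of $\phi_n$ and, implicitly, in ensuring that every occurrence of a factor is genuinely followed by a letter, so that "right extension" is always available to be pinned down.
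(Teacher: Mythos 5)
Your proof is correct and follows essentially the same route as the paper's: an induction on $n\geq N$ whose key step shows that unique right-extendability of all length-$n$ factors propagates to length-$(n+1)$ factors by passing to the length-$n$ suffix. The only cosmetic difference is that you phrase the counting via the surjective prefix map $F_{n+1}\to F_n$ whereas the paper uses the injective right-extension map $F_n\to F_{n+1}$; these are equivalent.
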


\begin{proof}
For each $n\geq 1$, let $\rho_n$ denote any natural injection from the factors of length $n$ in $w$ to the factors of length $n+1$ in $w$, in which a single letter is added to the right hand end of each factor. We will be done if we can show that $\rho_n$ is a bijection between the factors of lengths $n$ and $n+1$, for all $n\geq N$, and we show this by induction.  

The base case is clear, so fix $k\geq N$ for $\rho_k$. 
Take a factor $a=a_1\cdots a_{k+1}$ of $w$ of length $k+1$, and let $a_{k+2}$ be the letter such that $\rho_{k+1}(a) = a_1\cdots a_{k+2}$. Suppose that $b$ is any letter for which $a_1\cdots a_{k+1}b$ is a factor of $w$. 

Let $a' = a_2\cdots a_{k+1}$. The words $a'a_{k+2}$ and $a'b$ are both factors of $w$ of length $k+1$ formed by extending $a'$ to its right. Since $\rho_k$ is a bijection, there is only one letter that can be used to extend $a'$ to the right. Thus $\rho_k(a')=a'a_{k+2}=a'b$ so $a_{k+2}=b$. This shows that $a$ extends uniquely to the right, and so $\rho_{k+1}$ is a bijection between the factors of lengths $k+1$ and $k+2$, as required.
\end{proof}

Let $w$ be an infinite word over a finite alphabet $A$. If a finite factor $u$ of $w$ appears infinitely often, we say that $u$ is a \emph{recurrent factor}, otherwise it is a \emph{non-recurrent factor}. The \emph{recurrent complexity} is the function $q_w(n)$ that counts the number of distinct recurrent factors of length $n$ in $w$. Although this notion seems natural enough, it does not seem to have appeared in the literature previously. Clearly $p_w(n)\geq q_w(n)$ for all $n$, and $p_w=q_w$ if and only if $w$ is recurrent. We observe the following.

\begin{thm}\label{thm-recurrent-morse-hedlund}
	For an infinite word $w$, $q_w$ is increasing. Furthermore, if $w$ is not eventually periodic, then $q_w$ is strictly increasing. 
\end{thm}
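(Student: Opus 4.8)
The plan is to prove the two assertions about $q_w$ by adapting the arguments for the ordinary complexity function $p_w$, being careful throughout to track which factors are recurrent.

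For monotonicity: I would mimic the injection argument behind Proposition~\ref{prop-p-increasing}, but restricted to recurrent factors. The key observation is that if $u$ is a recurrent factor of $w$ of length $n$, then $u$ appears infinitely often, and each occurrence of $u$ is followed by some letter; since there are only finitely many letters in the alphabet, by the pigeonhole principle there is some letter $x$ such that $ux$ occurs infinitely often, i.e.\ $ux$ is a recurrent factor of length $n+1$. Choosing such an $x$ for each recurrent $u$ gives a map from recurrent factors of length $n$ to recurrent factors of length $n+1$; this map is injective because $u$ is recovered as the length-$n$ prefix of its image. Hence $q_w(n) \le q_w(n+1)$.

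For strict monotonicity when $w$ is not eventually periodic: here I would combine the above injection with the Morse--Hedlund machinery. Suppose for contradiction that $q_w(n) = q_w(n+1)$ for some $n$. Then the injection constructed above is a bijection, so every recurrent factor $u$ of length $n$ extends to the right \emph{uniquely} within the set of recurrent factors — and in fact extends uniquely among \emph{all} factors of $w$, since a factor of $w$ that occurs only finitely often cannot be extended to the right beyond its last occurrence, so far enough along $w$ every factor that appears is recurrent. The cleanest way to package this is: pass to the tail $w' $ of $w$ obtained by deleting a suitable finite prefix so that every factor of $w'$ of length $n$ (and $n+1$) is recurrent in $w$; then $p_{w'}(n) = q_w(n) = q_w(n+1) = p_{w'}(n+1)$, so by Lemma~\ref{lem-constant-forever}, $p_{w'}$ is eventually constant, whence $w'$ is periodic or eventually periodic by Theorem~\ref{thm-morse-hedlund} (in its contrapositive form). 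But then $w = u w'$ is eventually periodic, contradicting our hypothesis.

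The main obstacle — and the step that needs the most care — is justifying the reduction to a tail $w'$ all of whose factors of the relevant lengths are recurrent. One must argue that there are only finitely many non-recurrent factors of each fixed length (each occurs finitely often, and there are finitely many of them of length $n$ since $p_w(n)$ is finite), so only finitely many occurrences of non-recurrent factors of length $n$ or $n+1$ appear in $w$; deleting a long enough prefix removes all of them, and one should check that deleting a prefix does not turn a recurrent factor into a non-recurrent one nor create genuinely new factors in the tail. Once that bookkeeping is in place, the result follows by citing Lemma~\ref{lem-constant-forever} and Theorem~\ref{thm-morse-hedlund}.
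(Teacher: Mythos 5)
Your proposal is correct and takes essentially the same approach as the paper: the pigeonhole injection for monotonicity, and for strict monotonicity the reduction to a tail $w'$ obtained by deleting a prefix long enough to kill all non-recurrent factors of length $\leq n+1$, so that $p_{w'}(n)=q_w(n)=q_w(n+1)=p_{w'}(n+1)$ and Morse--Hedlund applies. The detour through Lemma~\ref{lem-constant-forever} is unnecessary (the contrapositive of Theorem~\ref{thm-morse-hedlund} gives the conclusion directly), but harmless.
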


\begin{proof}
 For the first part, let $n$ be arbitrary, and let $a$ be any recurrent factor of length $n$. Since $w$ is a word over a finite alphabet $A$, say, there exists at least one symbol $x\in A$ such that the letter after infinitely many instances of the word $a$ is $x$. Thus, $ax$ is a recurrent factor of length $n+1$, which establishes an injection from the set of recurrent factors of length $n$ to those of length $n+1$.
 
 For the second part, suppose that $q_w(n) = q_w(n+1)$ for some $n$. Write $w = uw'$, where $u$ is a finite prefix that has been chosen so that $w'$ contains none of the non-recurrent factors of length up to $n+1$ that appeared in $w$. Note that $q_w(k)=q_{w'}(k)$ for all $k$. Furthermore, by construction we have $q_{w'}(k)=p_{w'}(k)$ for all $k\leq n+1$. Thus
\[
p_{w'}(n) = q_{w'}(n) = q_w(n) = q_w(n+1) = q_{w'}(n+1) = p_{w'}(n+1),
\]
which shows that $w'$ is eventually periodic by Theorem~\ref{thm-morse-hedlund}. However, since $w=uw'$, it follows that $w$ is also eventually periodic.
\end{proof}

\begin{lemma}\label{lem-q-constant-forever}
For an infinite word $w$ over a finite alphabet, if there exists $N$ such that $q_w(N) = q_w(N+1)$, then $q_w(n) = q_w(N)$ for all $n\geq N$.
\end{lemma}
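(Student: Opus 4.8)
The plan is to reduce the statement to Lemma~\ref{lem-constant-forever}, using Theorem~\ref{thm-recurrent-morse-hedlund} together with a description of the recurrent factors of an eventually periodic word. First I would invoke Theorem~\ref{thm-recurrent-morse-hedlund} in contrapositive form: since $q_w$ is increasing, the equality $q_w(N)=q_w(N+1)$ means $q_w$ is not strictly increasing, and so $w$ must be eventually periodic. Hence we may write $w=uw'$ where $u$ is a finite prefix and $w'$ is a periodic word, of period $\omega$ say.

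Next I would show that the recurrent factors of $w$ are exactly the factors of $w'$. If $v$ is a recurrent factor of $w$, then $v$ occurs at infinitely many positions of $w$, and all but finitely many of these occurrences lie entirely beyond the finite prefix $u$, so $v$ is a factor of $w'$. Conversely, any factor $v$ of the periodic word $w'$ occurs at infinitely many positions of $w'$ (shifting by $\omega$ each time), and hence is a recurrent factor of $w$. Consequently $q_w(n)=p_{w'}(n)$ for every $n$.

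Finally, from $q_w(N)=q_w(N+1)$ we obtain $p_{w'}(N)=p_{w'}(N+1)$, so Lemma~\ref{lem-constant-forever} applied to $w'$ gives $p_{w'}(n)=p_{w'}(N)$ for all $n\geq N$. Translating back through the identity $q_w=p_{w'}$ yields $q_w(n)=q_w(N)$ for all $n\geq N$, as required.

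The step needing the most care is the second one: the bookkeeping that separates occurrences of a factor meeting the finite prefix $u$ from those lying entirely within the periodic tail $w'$. Even so, this is routine. An alternative, more self-contained route is a direct proof mirroring that of Lemma~\ref{lem-constant-forever}: using the right-extension injections $\rho_n$ from the proof of Theorem~\ref{thm-recurrent-morse-hedlund} on recurrent factors, and the observation that every factor of a recurrent factor is itself recurrent, one shows by induction that $\rho_n$ is a bijection for all $n\geq N$. In that version, the only ingredient not already present in Lemma~\ref{lem-constant-forever} is the remark that factors of recurrent factors are recurrent, which plays the role that ``factors of factors are factors'' plays there.
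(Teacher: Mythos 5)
Your proposal is correct, and both routes you sketch work. The paper omits the proof with the remark that it is ``essentially the same as that given in Lemma~\ref{lem-constant-forever}'', so the paper's intended argument is your \emph{alternative} route: run the induction from Lemma~\ref{lem-constant-forever} on the right-extension maps $\rho_n$ restricted to recurrent factors, using the fact (established in the first part of Theorem~\ref{thm-recurrent-morse-hedlund}) that right extension maps recurrent factors to recurrent factors, together with your key observation that any factor of a recurrent factor is itself recurrent (which replaces the trivial ``factors of factors are factors'' used in Lemma~\ref{lem-constant-forever}). Your primary route is genuinely different: you first apply the second part of Theorem~\ref{thm-recurrent-morse-hedlund} in contrapositive form to conclude that $w$ is eventually periodic, write $w=uw'$ with $w'$ periodic, prove $q_w = p_{w'}$, and then invoke Lemma~\ref{lem-constant-forever} for $w'$. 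This reduction is cleaner in that it avoids re-running the injection argument, but it is less elementary: it pulls in the Morse--Hedlund-type conclusion about eventual periodicity, which is strictly stronger machinery than the direct induction needs. The direct route is the more self-contained one, and is what the paper has in mind; your reduction via $q_w=p_{w'}$ is a nice modular alternative that makes explicit why recurrent complexity of an eventually periodic word behaves exactly like ordinary complexity of its periodic tail. One small nit: the injections $\rho_n$ on recurrent factors come from the proof of the first part of Theorem~\ref{thm-recurrent-morse-hedlund}, not from Lemma~\ref{lem-constant-forever} as your phrasing suggests; the $\rho_n$ in Lemma~\ref{lem-constant-forever} act on all factors, and it is the recurrent-factor analogue that you need here.
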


We omit the proof of this lemma, since it is essentially the same as that given in Lemma~\ref{lem-constant-forever}.

%
%
%
%
%
%
%
%
\section{Pin class structure}\label{sec-pin-class-structure}

We are now in a position to begin to describe the structure of permutations in pin classes. By definition, a pin permutation $(p_1,\dots,p_n)$ has the property that the set of points $\{p_1,\dots,p_i\}$ for any $i\geq 2$ cannot form a component for a box decomposition. On the other hand, the pin $p_{i}$ is the only point of the pin permutation that cuts the rectangular hull of $\{p_1,\dots,p_{i-1}\}$, thus its removal renders a box decomposition.

\begin{obs}\label{obs-deleting-pin}
For $n\geq 3$, let $u=u_1\cdots u_n$ be a pin word and $\pi_u=(p_1,\dots,p_n)$ the associated pin permutation. For any $1<i<n$, the subpermutation $\pi'$ formed by deleting the point $p_i$ from $\pi_u$ satisfies
\[
\pi' = \pi_{u_1\cdots u_{i-1}} \boxplus \pi_{u_{i+1}\cdots u_n}.
\]
\end{obs}

It is natural to wonder whether every pin permutation gives rise to a $\boxplus$-indecomposable permutation. Unfortunately, this is not quite true, as the following example illustrates.
\[
\begin{tikzpicture}[scale=.3,baseline=28pt]
	\plotpermgrid{3,1,5,2,6,4,7}
	\draw (7,6) -- (6)-| (4) |- (5) -| (2) |- (3) -| (1); 
	\draw[very thick] (0.5,5.5) -- ++(7,0);
	\draw[very thick] (6.5,0.5) -- ++(0,7);
\end{tikzpicture}
\ =\  
\begin{tikzpicture}[scale=.3,baseline=28pt]
	\plotpermgrid{3,1,5,2,6,4,0}
	\draw[very thick] (0.5,5.5) -- ++(7,0);
	\draw[very thick] (6.5,0.5) -- ++(0,7);
\end{tikzpicture}
\ \boxplus\ 
\begin{tikzpicture}[scale=.3,baseline=20pt]
	\plotpermgrid{0,0,0,4}
	\draw[very thick] (0.5,2.5) -- ++(4,0);
	\draw[very thick] (3.5,0.5) -- ++(0,4);
\end{tikzpicture}
\]
\cite{jarvis:pin-classes-i} establishes a complete classification of the box-decomposable pin permutations. We will return to this issue in Section~\ref{sec-two-quadrants}, but for now it suffices to note the following.

\begin{prop}\label{prop-indecomposable-is-factor}
Let $w$ be an infinite pin word, and let $\pi\in\C_w$ be a box-indecomposable permutation. Then $\pi = \pi_{w'}$ for some factor $w'$ of $w$.
\end{prop}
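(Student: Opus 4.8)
The plan is to begin from a witnessing embedding of $\pi$ into $\pi_w$, record which pins of the underlying pin sequence it uses, and show that box-indecomposability forces those pins to form a contiguous run; once that is known, $\pi$ is literally the sub-configuration of $\pi_w$ carved out by a run of consecutive pins, which is order-isomorphic to $\phi$ of the corresponding factor of $w$. Concretely, fix an embedding of $\pi$ in $\pi_w$ and let $i_1<i_2<\dots<i_k$ be the positions, in the pin sequence $(p_1,p_2,\dots)$ of $\pi_w$, of the points making up its image; set $w'=w_{i_1}w_{i_1+1}\cdots w_{i_k}$, the shortest factor of $w$ spanning these positions. It suffices to prove that $\{i_1,\dots,i_k\}$ is an interval, i.e.\ $i_j=i_1+(j-1)$ for every $j$. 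Indeed, a standard feature of the memory encoding --- the property that motivated its use here --- is that deleting an initial (respectively terminal) block of the points of a pin permutation corresponds to deleting the matching block of letters of its encoding; deleting the first $i_1-1$ points of $\pi_w$ and then all points beyond the $(i_k-i_1+1)$-st therefore leaves a configuration order-isomorphic to $\phi(w')$, and if the used positions form an interval that configuration is exactly the image of our embedding, namely $\pi$.

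To prove contiguity, suppose not and let $q$ be the least element of $\{i_1,\dots,i_k\}$ with $q+1\notin\{i_1,\dots,i_k\}$ yet some used position exceeding $q$; so positions $i_1,\dots,q$ are all used, $p_{q+1}$ is skipped, and at least one used position is $\ge q+2$. Writing $R_m$ for the rectangular hull of $\{p_1,\dots,p_m\}$, the definition of a pin sequence gives $R_1\subseteq R_2\subseteq\cdots$ with $p_{m+1}$ outside $R_m$ and separating $p_m$ from $\{p_1,\dots,p_{m-1}\}$; unwinding this spiralling behaviour shows that every pin $p_j$ with $j\ge q+2$ lies diagonally beyond a corner of $R_q$ --- both of its coordinates fall strictly outside the corresponding coordinate-range of $R_q$ --- while the skipped pin $p_{q+1}$ is merely sandwiched within one coordinate-range of $R_q$ and so lies in a \emph{side} region of $R_q$. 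Using this, one seeks a rectangle $B$ containing the origin and all used points of index $\le q$ (all of which lie in $R_q$) for which the used points of index $\ge q+2$ all lie outside $B$ in corner regions; such a $B$ exhibits $\pi$ as a box sum of two non-empty gridded permutations, contradicting box-indecomposability, so no position is skipped.

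The delicate point --- and the step I expect to be the real work --- is producing this rectangle $B$. When the origin already lies inside $R_q$ one simply takes $B=R_q$: the used points of index $\le q$ lie in $R_q=B$ and, by the structural fact above, the used points of index $\ge q+2$ lie in corner regions of $R_q=B$, so the box sum is immediate. When the origin is not yet enclosed by $R_q$ --- i.e.\ the pin sequence has not wrapped around the origin by stage $q$ --- enlarging the hull of the early points to a rectangle through the origin threatens to drag some late pin $p_j$ ($j\ge q+2$) into a side region of $B$, and ruling this out requires the finer geometry of pin sequences (the separating and outward-spiralling properties developed in~\cite{jarvis:pin-classes-i}) rather than just the nesting $R_1\subseteq R_2\subseteq\cdots$. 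The remaining ingredients --- the reduction to contiguity and the identification of a run of consecutive pins with $\phi$ of a factor --- are routine once the machinery of~\cite{jarvis:pin-classes-i} is in hand.
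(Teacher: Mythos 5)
Your proof is the paper's proof in different packaging: the paper isolates your structural claim as a standalone observation (deleting an interior pin $p_i$ from $\pi_{u_1\cdots u_n}$ yields $\pi_{u_1\cdots u_{i-1}}\boxplus\pi_{u_{i+1}\cdots u_n}$, because $p_i$ is the only point cutting the rectangular hull of $\{p_1,\dots,p_{i-1}\}$ while all later pins sit in its corner regions), and then runs your contiguity argument in the equivalent form of a minimal window of consecutive pins witnessing the embedding --- if the window is longer than $|\pi|$, minimality forces its first and last pins to be used, so some interior pin is skipped and $\pi$ embeds in a box sum with used points in both components, contradicting indecomposability.

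The one step you defer --- producing the rectangle $B$ when the origin is not enclosed by $R_q$ --- is not a genuine case, so no appeal to finer spiralling geometry from~\cite{jarvis:pin-classes-i} is needed. In the $2\times 2$-gridded (equivalently, centered) setting of this paper the origin is part of every configuration: $p_2$ is a proper pin for $\{(0,0),p_1\}$, and the hulls relative to which subsequent pins are ``outside'' and ``separating'' contain the origin. This convention is precisely what makes each memory-encoding letter determine a quadrant (e.g.\ why $\sfru$ forces quadrant~1), a fact you already rely on when you place the pins $p_j$ with $j\geq q+2$ in corner regions: the sign constraints coming from their quadrants are what guarantee that ``strictly outside $R_q$'' remains ``strictly outside'' after the hull is enlarged to contain the origin. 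So $R_q$ contains the origin for every $q$, $B=R_q$ always works, and the remainder of your argument (the reduction to contiguity and the identification of a consecutive run of pins with the factor $w'$) is sound.
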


\begin{proof}
Take any box indecomposable permutation $\pi\in\C_w$. We can witness the membership of $\pi$ in $\C_w$ by embedding its points in that of some pin permutation $(p_1,\dots,p_{N})$ for some $N\geq |\pi|$, which we may take to be as small as possible. In turn, this pin permutation corresponds to a factor $w_M\cdots w_{M+N-1}$ of $w$, for some $M$.

If $N > |\pi|$, then since $N$ was taken to be as small as possible, this implies that any given embedding of $\pi$ in $(p_1,\dots,p_N)$ must use both $p_1$ and $p_N$, and hence there is some $p_i$, with $1<i<N$, that is \emph{not} used in the embedding. Observation~\ref{obs-deleting-pin} now implies that $\pi$ embeds in $\pi_{w_M\cdots w_{M+i-2}}\boxplus \pi_{w_{M+i}\cdots w_{M+N-1}}$, with at least one point from each of the two box sum components used in the embedding. This implies that $\pi$ is box decomposable, but this is a contradiction. Hence $N=|\pi|$ and the proof is complete.
\end{proof}

In the context of pin words, we can now derive the following useful result.

\begin{prop}\label{prop-w-recurrent}
If $w$ is a recurrent pin word, then $C_w$ is $\boxplus$-closed.	
\end{prop}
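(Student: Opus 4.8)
The plan is to show that for any two permutations $\sigma, \tau \in \C_w$, the box sum $\sigma \boxplus \tau$ is again in $\C_w$. Since $\C_w$ consists of all finite subpermutations of $\pi_w$, it suffices to embed $\sigma \boxplus \tau$ into $\pi_w$. The key tool is Proposition~\ref{prop-indecomposable-is-factor} together with Observation~\ref{obs-deleting-pin} run in reverse: inserting a pin into a pin permutation realises a box sum of the two pieces on either side. So first I would reduce to the box-indecomposable case. Decompose $\sigma$ and $\tau$ into their box-indecomposable components; it is enough to show that the box sum of any finite collection of box-indecomposable permutations from $\C_w$ lies in $\C_w$, and by induction it is enough to handle $\alpha \boxplus \beta$ where $\alpha, \beta \in \C_w$ are box-indecomposable (taking care that, because of commuting pairs, one may have to be slightly careful about the order, but this is cosmetic: if one order embeds so does the permutation, and the ungridded/gridded class is what we care about).

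Next, by Proposition~\ref{prop-indecomposable-is-factor}, write $\alpha = \pi_{u}$ and $\beta = \pi_{v}$ where $u$ and $v$ are factors of $w$, say $u = w_a \cdots w_b$ and $v = w_c \cdots w_d$. Now I would use recurrence: since $w$ is recurrent, the factor $u$ occurs infinitely often, so there is some occurrence of $u$ in $w$ starting at a position $a' > d$ (i.e.\ strictly to the right of the occurrence of $v$). Thus $w$ contains a factor of the form $v \, x \, u'$ where $u'$ is an occurrence of $u$ and $x$ is the (possibly empty) connecting word in between. Then $w$ contains the factor $v\, x'\, u'$, and I want to say that $\pi_{v x' u'}$, after deleting the points corresponding to $x'$, contains $\pi_v \boxplus \pi_u$. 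Here I need to invoke the right form of Observation~\ref{obs-deleting-pin} iterated: deleting the middle block of pins from a pin permutation $\pi_{v x' u'}$ leaves exactly $\pi_v \boxplus \pi_{x'' u'}$ first, then repeatedly peeling further interior pins, one eventually reaches $\pi_v \boxplus \pi_{u'} = \pi_v \boxplus \pi_u = \beta \boxplus \alpha$. (A mild subtlety: Observation~\ref{obs-deleting-pin} is stated for deleting a single interior pin with $1 < i < n$, and one must ensure the first two letters of the whole factor $vx'u'$ aren't what is being deleted; since $v$ is nonempty and $u'$ is nonempty this is fine, and one can always pad so that there is at least one pin on each side.)

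The main obstacle I anticipate is the bookkeeping around the first two points of a pin word: the pin-permutation construction treats $p_1$ and $p_2$ specially (they are not "proper pins"), so the clean statement "deleting an interior pin gives a box sum" requires $1 < i < n$, and more importantly requires that what remains on each side is itself a legitimate pin permutation. When we splice a copy of $v$ far to the left of a copy of $u$ inside $w$, the copy of $u$ appearing as a later factor of $w$ has its first two letters playing the role of "pin following previous pin" in the memory encoding, not the role of $p_1, p_2$; I would need to check (using the structure of the memory encoding, which records the previous pin's direction) that the gridded permutation read off from that later occurrence of $u$, once the connecting pins to its left are deleted, is genuinely $\pi_u$ sitting in a fresh $2\times 2$ grid, i.e.\ that the box decomposition cleanly separates it. This is exactly the content that makes the memory encoding "more amenable to handling subpermutations" as noted after Figure~\ref{fig-pin-example}, so I expect it to go through, but it is where the care is needed. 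Once that is established, a straightforward induction on the number of box-indecomposable components of $\sigma$ and $\tau$ — at each stage finding a fresh far-right occurrence of the next component's factor, using recurrence again — completes the proof that $\sigma \boxplus \tau \in \C_w$, hence $\C_w$ is $\boxplus$-closed.
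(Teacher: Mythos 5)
Your core idea — reduce to indecomposables, invoke Proposition~\ref{prop-indecomposable-is-factor} to write them as pin-word factors, use recurrence to find a later copy, and then delete the intervening pins via Observation~\ref{obs-deleting-pin} — is exactly the engine of the paper's proof, and all the bookkeeping concerns you raise about the memory encoding are legitimate and indeed the reason that encoding is used. However, the reduction is set up differently from the paper, and your initial statement of the reduction, taken literally, has a gap.

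You claim that ``by induction it is enough to handle $\alpha\boxplus\beta$ where $\alpha,\beta\in\C_w$ are box-indecomposable.'' That is not quite sufficient on its own: once you know $\alpha_1\boxplus\alpha_2\in\C_w$, you cannot then apply the pair case to $(\alpha_1\boxplus\alpha_2)\boxplus\alpha_3$, because $\alpha_1\boxplus\alpha_2$ is box-\emph{decomposable}, so Proposition~\ref{prop-indecomposable-is-factor} gives you no factor of $w$ witnessing it, and your splicing argument does not start. You do rescue this at the end by describing the correct iterated argument (find non-overlapping occurrences of \emph{all} the components' factors at increasing positions, using recurrence repeatedly), but that is a different and heavier reduction than the one you first announce. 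The paper sidesteps the issue entirely by reducing only $\tau$ to the indecomposable case while keeping $\sigma$ arbitrary: it embeds $\sigma$ directly into $\pi_u$ for a finite prefix $u$ of $w$, then finds the single factor for the indecomposable $\tau$ beyond that prefix (not using the first letter of $w'$), so only one splice is needed, and the inductive step is simply to apply the same claim with $\sigma' := \sigma\boxplus\tau_1$. You should either adopt that cleaner reduction or explicitly reframe your induction as ``simultaneously realise all $m$ indecomposable components as disjoint, increasing factors'' rather than ``handle pairs.''
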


\begin{proof}
It suffices to show that if $\sigma,\tau\in\C_w$ with the property that $\tau$ is $\boxplus$-indecomposable, then $\sigma\boxplus\tau\in\C_w$.

Let $u$ be any finite prefix of $w$ such that $\sigma$ embeds in $\pi_u$. Then we can write $w=uw'$.
By Proposition~\ref{prop-indecomposable-is-factor}, the $\boxplus$-indecomposable permutation $\tau$ in $\C$ can be constructed using a factor of $w$. Since $w$ is recurrent, this factor appears infinitely often. In particular, we can find this factor in the word $w'$ in such a way as it does not use the first letter of $w'$, and the embedding of $\sigma\boxplus\tau$ in $\pi_w$ now follows.
\end{proof}

In cases where the defining word $w$ is not recurrent, we can recover a subclass of $\C_w$ that \emph{is} $\boxplus$-closed. The \emph{box-interior} of the class $\C_w$ is defined by
\[
\C_w^\boxplus = \{ \sigma \in \C_w : \sigma \leq \pi_u \text{ for some recurrent factor }u\}.
\]
Clearly $\C_w^\boxplus \subseteq \C_w$ is a permutation class, and $\C_w^\boxplus = \C_w$ if and only if $w$ is recurrent (this follows by an argument similar to that given in the proof of Proposition~\ref{prop-w-recurrent}). However, even more is true:

\begin{thm}[\cite{jarvis:pin-classes-i}]\label{thm-jarvis-box-interior}
For any pin word $w$, we have $\gr(\C_w) = \gr(\C_w^\boxplus)$.	
\end{thm}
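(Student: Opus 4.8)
Proof plan for Theorem~\ref{thm-jarvis-box-interior} ($\gr(\C_w)=\gr(\C_w^\boxplus)$).

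The plan is to show the two inequalities separately. Since $\C_w^\boxplus\subseteq\C_w$ is immediate, we have $\gr(\C_w^\boxplus)\le\gr(\C_w)$ (using that pin classes have genuine growth rates, by the theorem of Jarvis quoted earlier, so this comparison is meaningful). The substance is the reverse inequality $\gr(\C_w)\le\gr(\C_w^\boxplus)$. The idea is that every permutation in $\C_w$ decomposes, via the unique box-indecomposable decomposition, into a $\boxplus$-sum of box-indecomposable pieces, and each such piece — by Proposition~\ref{prop-indecomposable-is-factor} — is of the form $\phi(w')$ for a factor $w'$ of $w$. Only finitely many of these indecomposable pieces can be built from \emph{non-recurrent} factors of $w$: indeed, if $w=uw''$ with $w''$ recurrent, then every factor of $w$ that occurs past the prefix $u$ is recurrent, so non-recurrent factors all lie within a bounded initial segment, and there are only finitely many of them. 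Hence there is a finite set $B$ of box-indecomposable permutations (those arising from non-recurrent factors), and every $\pi\in\C_w$ is a $\boxplus$-sum of elements of $B$ together with box-indecomposable permutations lying in $\C_w^\boxplus$.

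First I would make the ``finitely many bad pieces'' statement precise: choose the prefix $u$ so that $w''$ is recurrent, let $N=|u|$, and observe that any factor $w'$ of $w$ which is not recurrent must begin at a position $\le N$ and have length $\le N$ (a factor entirely inside $w''$ is recurrent, and a factor starting at position $>N$ lies inside $w''$); so the box-indecomposable permutations $\phi(w')$ arising from non-recurrent $w'$ form a finite set $B$, and every box-indecomposable permutation in $\C_w$ not in $B$ lies in $\C_w^\boxplus$. Next I would set up the counting. Let $f_\C$ and $f_{\C^\boxplus}$ denote the generating functions, and use the Cartier--Foata machinery (Lemma~\ref{lem-cartier-foata}, Corollary~\ref{cor-cartier-foata}) to express each as a trace-monoid generating function over the five-letter alphabet $\{a,\ell_1,\ell_2,\ell_3,\ell_4\}$ with the two commutation relations $\ell_1\ell_3=\ell_3\ell_1$, $\ell_2\ell_4=\ell_4\ell_2$. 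The only subtlety is that $\C_w$ and $\C_w^\boxplus$ need not themselves be $\boxplus$-closed; but one can instead apply the trace-monoid expression to the \emph{box closure} of each, since a class and its box closure have the same growth rate is exactly what we are trying to avoid assuming — so more carefully, I would work with $\C_w^\boxplus$, which \emph{is} $\boxplus$-closed (its box-indecomposables appearing in recurrent factors, with commuting pairs handled automatically, by the same argument as Proposition~\ref{prop-w-recurrent}), and bound $|(\C_w)_n|$ in terms of the $\boxplus$-closure of $\C_w^\boxplus\cup B$.

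The key estimate is then combinatorial: any $\pi\in\C_w$ of length $n$, written in its unique decomposition as a sequence of box-indecomposables, uses some number $j$ of pieces from the finite set $B$ (each of bounded size, say $\le M$), contributing total length $\le jM$, interleaved among pieces from $\C_w^\boxplus$. Since $\pi$ must actually embed in a genuine pin permutation of $\C_w$, the pieces from $B$ cannot be arbitrary in number — in fact the non-recurrent factors all lie in the initial segment of $w$, so the whole ``non-recurrent part'' of $\pi$ has bounded size, independent of $n$. Thus $\pi$ is (up to a bounded-size prefix chunk) a box-sum of pieces from $\C_w^\boxplus$, and $|(\C_w)_n| \le K\cdot n^c\cdot \sum_{i\le n}|(\mathcal{D})_i|$ where $\mathcal D$ is the box closure of $\C_w^\boxplus$ and $K,c$ are constants absorbing the finitely many choices of initial chunk and the polynomial overhead of where to splice it; from $\gr(\mathcal D)=\gr(\C_w^\boxplus)$ (which follows directly from the Cartier--Foata formula, as the generating function of the box closure of a class has the same dominant singularity) one concludes $\ugr(\C_w)\le\gr(\C_w^\boxplus)$, and combined with the trivial direction this forces equality.

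The main obstacle I anticipate is handling the interaction between the box-sum decomposition and commuting pairs cleanly: because a permutation may have several box-sum expressions differing by transpositions of commuting adjacent indecomposables, one must either pass to the trace monoid throughout (and check that the ``bad'' pieces in $B$ interact with the commutation relations in a controlled way) or argue directly with the canonical decomposition and accept a polynomial overcount. The cleanest route is probably the trace-monoid one: write $f_{\C_w^\boxplus}(z) = M_g(z)^{-1}$ where $g(\ell_i)$ counts the box-indecomposables of $\C_w^\boxplus$ in quadrant $i$ and $g(a)$ the rest, note that throwing in the finite set $B$ changes each $g$-value by adding a polynomial, and verify that this perturbation does not move the dominant singularity of the resulting rational function — equivalently, that adjoining finitely many generators to a trace monoid does not change the exponential growth rate, which is a standard fact about the spectral radius of the associated transfer/incidence structure. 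Everything else is routine bookkeeping.
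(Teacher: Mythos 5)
The paper does not actually prove this theorem: it is cited from the companion paper \cite{jarvis:pin-classes-i}, so there is no in-text proof to compare against. Your proposal does, however, have a genuine gap.

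Your argument hinges on writing $w = uw''$ with $w''$ recurrent, i.e.\ on $w$ being \emph{eventually recurrent}, and on the consequent claim that all non-recurrent factors live in a bounded initial segment and form a finite set $B$. This is false in general, and the paper itself makes a point of this: the word $w^\star = \phi(b^\star)$ in Section~\ref{sec-non-periodic} is neither recurrent nor eventually recurrent (see the abstract and the discussion preceding Proposition~\ref{prop-liouville-V}). For that word the factors $\phi(01^k0)$ each occur exactly once, at positions tending to infinity, so the non-recurrent factors do not sit in any finite prefix, and the set $B$ is infinite. Worse, even when $w$ \emph{is} eventually recurrent, your claim that every non-recurrent factor ``has length $\le N$'' is unjustified: in $w = c\,(w'')$ with $w''$ recurrent and $c$ a single symbol not occurring in $w''$, every prefix $c, cw''_1, cw''_1w''_2,\ldots$ is non-recurrent, so $B$ is still infinite. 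The argument that rescues the eventually-recurrent case is the one you only gesture at --- in any single embedding of $\pi$ into $\pi_w$, at most $N = |u|$ pins can come from $u$, so the ``non-recurrent part'' of $\pi$ has bounded size --- but this rescue is unavailable when $w$ is not eventually recurrent, because there is no such $N$. Proposition~\ref{prop-liouville-V} applies the theorem precisely to such a $w$, so this is not a degenerate corner case: it is the content of the theorem. You would need a fundamentally different argument (one that does not reduce to ``throw away a bounded prefix'') to cover the general case.
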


%
%
%
%
%
%
\subsection{Visits}

Given a pin word $w$, a \emph{visit} to a quadrant $q$ is a finite contiguous subsequence $w_{i+1}\cdots w_{i+k}$ of $w$ such that the $k$ corresponding points of $\pi_w$ are placed in quadrant $q$, while neither the point corresponding to $w_i$ (if it exists) nor $w_{i+k+1}$ belongs to quadrant $q$. Trivially, we have:

\begin{prop}\label{prop-visits-are-oscillations}
The permutation corresponding to any visit forms an oscillation, which is increasing if it belongs to quadrants 1 or 3, and decreasing otherwise.
\end{prop}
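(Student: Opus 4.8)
The plan is to reduce to a visit to quadrant~$1$, recognise such a visit as an instance of the standard ``oscillating'' pin construction, and then transfer the conclusion to the remaining quadrants using the symmetries of the $2\times 2$ grid.

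For the reduction: the eight symmetries of the square act on pin permutations initiated in the $2\times 2$ grid (each carries a pin sequence to a pin sequence, since rectangular hulls and proper pins are preserved), they permute the four quadrants, and they carry a visit to some quadrant onto a visit to the image quadrant, taking the visit's pattern onto its image. The $180^\circ$ rotation, which acts on patterns as reverse--complement, moves quadrant~$1$ to quadrant~$3$ and sends increasing oscillations to increasing oscillations; a reflection in one of the two gridlines, which acts as reverse or as complement, moves quadrant~$1$ to quadrant~$2$ or to quadrant~$4$ and sends increasing oscillations to decreasing ones. So it is enough to prove that the pattern of a visit to quadrant~$1$ is an increasing oscillation.

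So let $w_{i+1}\cdots w_{i+k}$ be a visit to quadrant~$1$, with corresponding points $q_1=p_{i+1},\dots,q_k=p_{i+k}$. Since pin directions alternate between the horizontal kinds $\sfl,\sfr$ and the vertical kinds $\sfu,\sfd$, and since (as encoded in the memory alphabet) the quadrant of $p_m$ for $m>2$ is determined by the directions of $p_{m-1}$ and $p_m$, with only a right pin after an up pin ($\sfru$) or an up pin after a right pin ($\sfur$) landing in quadrant~$1$, the letters of the visit alternate between $\sfru$ and $\sfur$. Now I claim that for $2\le j\le k$, if $q_j$ is a right pin then relative to $\{q_1,\dots,q_{j-1}\}$ it lies to the right of all of them and immediately below $q_{j-1}$, while if $q_j$ is an up pin then it lies above all of them and immediately to the left of $q_{j-1}$. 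Indeed, if $q_j$ is a right pin then $q_{j-1}$ is an up pin, hence the topmost point among $p_1,\dots,p_{i+j-1}$; and $q_j$, a proper pin for $(p_1,\dots,p_{i+j-1})$, lies to the right of the rectangular hull of those points with height strictly between that of $q_{j-1}$ and the greatest height among $p_1,\dots,p_{i+j-2}$ -- so $q_j$ is to the right of, and above, every point $p_1,\dots,p_{i+j-2}$ (in particular every $q_1,\dots,q_{j-2}$) while below $q_{j-1}$. The up-pin case is symmetric, and $k\le 2$ is immediate. This is precisely the construction of an increasing oscillation -- alternately adjoin a new rightmost point just beneath the current highest point, then a new highest point just left of the current rightmost point -- so $q_1,\dots,q_k$ has an increasing oscillation as its pattern; equivalently, this pattern equals $\pi_u$ for $u=w_{i+1}\cdots w_{i+k}$, a pin permutation whose pin sequence lies entirely in quadrant~$1$, which is an increasing oscillation by definition.

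There is no real obstacle here -- indeed the statement is labelled trivial -- and the only two points needing a moment's care are: (i) that a quadrant-$1$ visit uses only the letters $\sfru$ and $\sfur$, which follows from the alternation of pin directions together with the fact that the quadrant of $p_m$ ($m>2$) is fixed by the directions of $p_{m-1}$ and $p_m$; and (ii) that ``$q_j$ lies just below (or left of) $q_{j-1}$'' is governed only by the other visit points, which holds because $q_j$ must lie outside the hull of \emph{all} earlier points and so dominates each earlier visit point in the relevant coordinate. The symmetry bookkeeping in the reduction step is routine.
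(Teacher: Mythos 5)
Your proof is correct. The paper offers no argument at all for this proposition (it is prefaced only by ``Trivially, we have:''), so there is no ``paper's proof'' to compare against; your write-up simply supplies the details the paper treats as self-evident, namely that a visit to a quadrant is a maximal run of consecutive pins in that quadrant, its memory word alternates between the two letters assigned to that quadrant ($\sfru$ and $\sfur$ for quadrant~1), each new pin is forced to dominate all prior visit points in its new coordinate while sitting just inside the previous pin in the other, and the resulting pattern is therefore exactly the quadrant-$1$ pin permutation $\pi_u$ -- an increasing oscillation. The reduction to quadrant~1 via symmetries of the square, with reverse--complement preserving increase and the two gridline reflections swapping increase for decrease, is also correct and disposes of the remaining three quadrants cleanly.

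One minor point of presentation: in the step where you assert that the relative pattern of $q_1,\dots,q_k$ coincides with $\pi_u$ built from scratch, it is worth stating explicitly (as you do implicitly) that the stronger constraint ``$q_j$ lies outside the hull of \emph{all} of $p_1,\dots,p_{i+j-1}$'' still pins down the same relative order among the visit points as the weaker constraint used in building $\pi_u$, because the only coordinate in which $q_j$ is not extremal among all prior points is the one in which it is sandwiched strictly between $q_{j-1}$ and the hull, and that hull contains all earlier visit points. You say exactly this in your parenthetical remark (ii), so the proof is complete; I am only flagging that this is the one place a reader might pause.
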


Each visit $w_{i+1}\cdots w_{i+k}$ is initiated by an \emph{arrival}, being the first point of the visit (corresponding to $w_{i+1}$), and a \emph{departure}, being the last point (corresponding to $w_{i+k}$). Our work on establishing growth rates will, in part, rely on the analysis of the number and length of visits to different quadrants.

Note the requirement that a visit is a \emph{finite} subsequence. Thus, it is possible for an infinite pin sequence to contain only finitely many visits, or possibly none at all. The number of quadrants which a pin sequence visits infinitely often strongly controls the range of growth rates of the corresponding pin class. Before we demonstrate this, we first show that pin sequences can be freely truncated at the start without impacting the growth rate.

\begin{prop}\label{prop-truncate-gr}
For any pin sequence $w$, and any expression $w=vw'$ where $v$ is a finite prefix, we have $\gr(\C_w)=\gr(\C_{w'})$.	
\end{prop}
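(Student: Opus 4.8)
The plan is to sidestep a direct comparison of $\C_w$ and $\C_{w'}$ — deleting a finite prefix of a pin word can genuinely alter the pin class (for instance, the quadrant occupied by $p_1$ is simply forgotten) — and instead to show that $\C_w$ and $\C_{w'}$ have the \emph{same} $\boxplus$-interior. Theorem~\ref{thm-jarvis-box-interior} then yields
\[
\gr(\C_w)=\gr(\C_w^\boxplus)=\gr(\C_{w'}^\boxplus)=\gr(\C_{w'}),
\]
so the crux is the set equality $\C_w^\boxplus=\C_{w'}^\boxplus$. This holds because the $\boxplus$-interior is determined entirely by the recurrent factors of the defining word, and these are insensitive to the removal of a finite prefix.

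First I would observe that $w$ and $w'$ have exactly the same recurrent factors. Since $v$ is finite, a finite word occurs infinitely often in $w$ if and only if it occurs infinitely often in the (infinite) suffix $w'$; moreover, any factor of $w$ that occurs infinitely often must occur in its entirety inside $w'$, hence is itself a factor of $w'$. Write $R$ for this common family of (nonempty) recurrent factors. Here I use that every factor of a pin word is again a pin word — in particular $w'$ is an infinite pin word, so $\C_{w'}$ is a genuine pin class and $\pi_u$ is defined for each $u\in R$.

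Next I would show that $\pi_u\leq\pi_w$ and $\pi_u\leq\pi_{w'}$ for every $u\in R$. As $u$ is recurrent it occurs in $w$ as $w_{i+1}\cdots w_{i+|u|}$ for some $i\geq 2$; applying Observation~\ref{obs-deleting-pin} to the prefix $w_1\cdots w_{i+|u|}$ with deleted index $i$ shows that removing $p_i$ from $\pi_{w_1\cdots w_{i+|u|}}$ yields $\pi_{w_1\cdots w_{i-1}}\boxplus\pi_u$, whence $\pi_u\leq\pi_w$; the identical argument inside $w'$ gives $\pi_u\leq\pi_{w'}$. Consequently, the membership clause ``$\sigma\in\C_w$'' in the definition of $\C_w^\boxplus$ is automatically satisfied whenever $\sigma\leq\pi_u$ for some $u\in R$, so
\[
\C_w^\boxplus=\{\sigma:\sigma\leq\pi_u\text{ for some }u\in R\}=\C_{w'}^\boxplus,
\]
and an appeal to Theorem~\ref{thm-jarvis-box-interior} finishes the argument.

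I expect the one point requiring care to be the embedding $\pi_u\leq\pi_w$: one needs the pin permutation of a factor to embed — \emph{as a gridded permutation} — into the pin permutation of the full word. Observation~\ref{obs-deleting-pin} provides exactly this, but it applies only when the deleted point lies strictly between the first and last points of the pin sequence, so one must use an occurrence of $u$ that starts at position at least $3$; recurrence of $u$ is precisely what guarantees that such an occurrence exists. The remaining content is routine bookkeeping about which factors occur infinitely often.
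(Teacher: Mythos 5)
Your argument is correct and follows the same route as the paper: both establish the identity $\C_w^\boxplus=\C_{w'}^\boxplus$ and then invoke Theorem~\ref{thm-jarvis-box-interior}. The paper simply asserts that the box interiors coincide, whereas you supply the verification (via Observation~\ref{obs-deleting-pin}, the stability of the set of recurrent factors under removal of a finite prefix, and the embedding $\pi_u\leq\pi_w$ for recurrent $u$), which is a correct expansion of the paper's terse proof.
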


\begin{proof}
Since $v$ is finite, the pin classes $\C_w$ and $\C_{w'}$ have the property that their box interiors are identical. The statement follows by Theorem~\ref{thm-jarvis-box-interior}.
\end{proof}

A particularly useful application of this proposition is the following.

\begin{prop}\label{prop-kill-finite-visits}
For any pin sequence $w$ that visits quadrant $q$ only finitely many times, we have $\gr(\C_w)=\gr(\C_{w'})$ where $w=vw'$ and $w'$ never visits quadrant $q$.	
\end{prop}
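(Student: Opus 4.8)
The plan is to reduce the statement directly to Proposition~\ref{prop-truncate-gr}. That proposition already gives $\gr(\C_w) = \gr(\C_{w'})$ for \emph{any} decomposition $w = vw'$ with $v$ a finite prefix, so everything hinges on choosing a finite prefix $v$ for which the suffix $w'$ has no visit to quadrant $q$ at all.

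To do this, I would first isolate the simple dichotomy underlying the definition of a visit: a point $p_i$ of $\pi_w$ lying in quadrant $q$ either belongs to a visit to $q$ — namely the maximal run of consecutive points in $q$ containing it, provided that run is bounded on the right by a point outside $q$ — or else every point occurring after $p_i$ also lies in $q$, so that $\pi_w$ has an ``infinite tail'' inside $q$. Consequently, the hypothesis that $w$ visits $q$ only finitely often, combined with the fact that each visit is by definition finite, means exactly that only finitely many points $p_i \in q$ are followed by some later point not in $q$. Let $N$ be an integer at least as large as the largest index of such a point (taking $N = 0$ if there is none), and set $v = w_1\cdots w_N$ and $w' = w_{N+1}w_{N+2}\cdots$.

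It then remains to verify that $w'$ has no visit to $q$. The visits to $q$ of $w'$ correspond precisely to the visits to $q$ of $w$ lying entirely within the suffix $w'$; this follows readily from the memory encoding, in which the quadrant of each point of a pin permutation is determined locally by the pin word, so truncating a finite prefix does not disturb which quadrant the remaining points occupy. Any such visit would in particular contain a point $p_i$ of $\pi_w$ with $i > N$ that lies in $q$ and is followed by a later point not in $q$, contradicting the choice of $N$. Hence $w'$ never visits $q$, and Proposition~\ref{prop-truncate-gr} yields $\gr(\C_w) = \gr(\C_{w'})$, completing the proof.

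The only point that requires genuine care is the second paragraph: one must correctly separate the points of $q$ that sit on an infinite tail (and therefore lie in no visit) from those that do, so that the phrase ``finitely many visits'' is faithfully translated into ``finitely many points of $q$ with a later point outside $q$.'' Once that translation is in place the remainder is a routine appeal to Proposition~\ref{prop-truncate-gr}, involving no computation.
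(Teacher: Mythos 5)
Your proof is correct and matches the approach the paper implicitly intends: Proposition~\ref{prop-kill-finite-visits} is presented there as an immediate application of Proposition~\ref{prop-truncate-gr}, with no separate argument supplied. Your elaboration --- choosing $N$ so that every point of $q$ with later index lies on an infinite tail in $q$, and observing that the memory encoding fixes each point's quadrant independently of any truncated prefix --- correctly fills in the unstated details.
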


We now begin our classification of small growth rates.

\begin{thm}\label{thm-finite-visits-gr}
Let $w$ be an infinite pin word that contains only finitely many visits. Then $\gr (\C_w) = \kappa \approx 2.20557$, the largest real root of $z^3-2z^2-1$.
\end{thm}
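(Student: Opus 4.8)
The plan is to show that the box interior $\C_w^\boxplus$ of a pin class with only finitely many visits is precisely the box closure of a finite set of oscillations living in two opposing quadrants, and then to compute the growth rate of that box-closed class using the Cartier--Foata machinery (Corollary~\ref{cor-cartier-foata}). First I would apply Proposition~\ref{prop-kill-finite-visits} repeatedly to pass from $w$ to a tail $w'$ that visits \emph{no} quadrant; by Proposition~\ref{prop-truncate-gr} this does not change the growth rate. Once no quadrant is visited, every point of $\pi_{w'}$ except for a bounded initial segment must lie in a quadrant that is never left once entered --- so the pins eventually stabilise into at most two quadrants, and (since pins alternate horizontal and vertical directions) these two quadrants must be a diagonally opposite pair, say $1$ and $3$ (the case $2,4$ being symmetric). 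After a further finite truncation we may assume $\pi_{w'}$ is an infinite increasing oscillation straddling the origin with all its points in quadrants $1$ and $3$.

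Next I would identify $\C_{w'}^\boxplus$ explicitly. Since $w'$ is (eventually, hence after truncation) a word whose associated permutation is a single infinite oscillation, the box-indecomposable permutations it contains are exactly the finite oscillation factors, and these all lie wholly inside quadrant $1$ or wholly inside quadrant $3$ (plus the trivial one-point and two-point increasing pieces). So in the notation of the Cartier--Foata setup we get $g(a)=0$, while $g(\ell_1)=g(\ell_3)$ is the generating function counting the finite increasing oscillations --- a rational function, by Bassino--Bouvel--Rossin~\cite{bassino:enumeration-of-:} or by direct inspection --- and $g(\ell_2)=g(\ell_4)=0$. Here I should be careful: the box interior is the box \emph{closure} of the recurrently-appearing factors (Proposition~\ref{prop-indecomposable-is-factor} together with the definition of $\C_w^\boxplus$ and Proposition~\ref{prop-w-recurrent}), and since the oscillation is recurrent every oscillation factor that appears appears infinitely often, so $\C_{w'}^\boxplus$ is genuinely box-closed and equals $\bigboxplus$ of the set of all finite increasing oscillations in the two quadrants. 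Plugging into the formula
\[
f_{\C_{w'}^\boxplus}(z) = \frac{1}{1 - 2g(\ell_1) + g(\ell_1)^2} = \frac{1}{(1-g(\ell_1))^2},
\]
the dominant singularity is the smallest positive solution of $g(\ell_1)(z)=1$, and one checks this yields $1/\rho$ with $\rho$ the appropriate root of $z^3-2z^2-1$, i.e.\ growth rate $\kappa$.

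The main obstacle, and the step needing the most care, is the reduction in the first paragraph: a pin word with finitely many visits need not have \emph{all} its points eventually in two opposing quadrants in an obvious way --- I need to argue that "finitely many visits to every quadrant" forces the tail of the pin sequence to settle permanently into one pair of opposite quadrants, using the structural fact that consecutive pins alternate between $\{\sfl,\sfr\}$ and $\{\sfu,\sfd\}$ and that leaving a quadrant and returning to it constitutes a new visit. Concretely: if after some point no new visit begins, then the sequence never re-enters a quadrant it has left, so it occupies a nested decreasing family of quadrant-sets, which must stabilise; a singleton is impossible (pins alternate direction and so cannot stay in one quadrant forever), hence it stabilises at a two-element set, and the alternation forces that pair to be $\{1,3\}$ or $\{2,4\}$. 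I would also need the easy direction --- that $\gr(\C_w)\geq\kappa$ always holds for such $w$ (indeed the infinite oscillation is present, and its finite oscillations already box-close to something of growth rate $\kappa$) --- but this is subsumed by the exact computation above once the structure is pinned down. The remaining ingredients (rationality of the oscillation generating function, the value of its smallest singularity) are routine and can be cited or computed directly.
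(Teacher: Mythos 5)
The central structural claim in your reduction is wrong, and it is wrong in a way that inverts the actual geometry. You argue that after truncating away all visits, the tail of the pin sequence cannot stay in a single quadrant (``pins alternate direction and so cannot stay in one quadrant forever'') and must therefore stabilise into a pair of diagonally opposite quadrants, say $\{1,3\}$. Both halves of this are false. The oscillation $(\sfur\sfru)^*$ alternates between up and right pins at every step, yet every one of its points lies in quadrant~1: the alternation of pin \emph{directions} does not force a change of \emph{quadrant}. Conversely, a pin sequence can never move from quadrant~1 directly to quadrant~3 (or $2\leftrightarrow 4$): if $p_{i}$ lies in quadrant~1 then it is $\sfur$ or $\sfru$, and the only possible successors are $\sfru,\sflu$ (after $\sfur$) or $\sfur,\sfdr$ (after $\sfru$), landing in quadrants~$1,2$ or $1,4$ respectively. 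Opposite quadrants are never adjacent along a pin sequence, so the ``infinite increasing oscillation straddling the origin in quadrants~1 and~3'' that you build your computation around is not a pin permutation at all. The correct conclusion from ``no visits'' is the opposite of yours: the tail lies wholly in a \emph{single} quadrant, and is therefore $(\sfur\sfru)^*$ up to symmetry. (This is also the only place in the argument where the parenthetical ``if it exists'' in the definition of a visit matters: the infinite run in quadrant~1 is not a visit precisely because it has no departure.)

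What saves the numerical answer is a coincidence. With your (impossible) two-quadrant picture you compute $f = 1/(1-g(\ell_1))^2$, whereas the correct one-quadrant picture gives $f = 1/(1-g(\ell_1))$ with $g(\ell_1) = (z+z^3)/(1-z)$. These have the same smallest positive singularity, namely the smallest positive root of $g(\ell_1)(z)=1$, so you would report the same growth rate $\kappa$. But the class you describe is not a pin class and its counting sequence is roughly the square of the correct one, so the argument does not actually establish the theorem. Replacing the second paragraph with the single-quadrant identification (and dropping the two-quadrant Cartier--Foata step in favour of the plain $\boxplus$-closure formula $1/(1-g(\ell_1))$) repairs the proof and recovers the paper's argument.
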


\begin{proof}
	By Proposition~\ref{prop-kill-finite-visits}, we can, by truncating the word if necessary, assume that $w$ in fact contains \emph{no} visits. This means that $w$ belongs to a single quadrant and is thus periodic.  
	
	Without loss generality we have $w = (\sfur\sfru)^\infty$, and the pin class $\C_{w}$ is $\boxplus$-closed by Proposition~\ref{prop-w-recurrent}. By inspection, the number of $\boxplus$-indecomposable permutations in $\C_{w}$ of lengths $1,2,3,\dots$ are $1,1,2,2,\dots$, and thus $\C_{w}$ has generating function
	\[
	f_{C_{w}}(z) = \frac1{1-\frac{z+z^3}{1-z}} = \frac{1-z}{1-2z-z^3}.
	\]
	Thus $\gr(\C_{w}) = \kappa \approx 2.20557$, the largest real root of $z^3-2z^2-1$.
\end{proof}

Any pin word that contains arbitrarily many visits necessarily must visit at least \emph{two} quadrants infinitely often. Our next task is to establish that any pin class whose defining sequence visits more than two quadrants infinitely often has growth rate well above our target growth rate of $\mu\approx 3.28277$. 

\begin{prop}\label{prop-4-quadrants}
Let $w$ be an infinite pin word, and suppose that $\C_w^\gridded$ visits every quadrant infinitely often. Then $\gr(\C^\gridded_w)\geq 2+\sqrt2 \approx 3.414$.
\end{prop}

\begin{proof}
Any class that visits every quadrant infinitely often must contain the box closure 
\[
	\bigboxplus\left\{\nept,\,\nwpt,\,\swpt,\,\sept\right\},
\] 
which is the class $\XXX$ considered at the end of Section~\ref{sec-preliminaries}. The generating function of $\XXX^\gridded$ has denominator $1-4z+2z^2$, and thus $\gr(\XXX) = 2+\sqrt2$, from which the proposition follows.
\end{proof}

Note that, in fact, the \emph{smallest} growth rate of a pin class in all four quadrants is approximately $3.48806$ (being the largest real root of $z^5-5z^4+6z^3 -2z^2 -z-3$) and is achieved by the `Widdershins spiral' defined by the word $w=(\sfur\sfl\sfd\sfr)^\infty$. To see this, consider the following class:
\[
	\bigboxplus\left\{\nept,\,\nwpt,\,\swpt,\,\sept,\,\netwo\right\}
\]
The generating function for this class has denominator $1-4z+z^2+z^3$, and thus has growth rate approximately equal to $3.69109$. Consequently, by considering all symmetries of the above class, the smallest pin class in all four quadrants must be defined by a pin sequence which has the property that, except for finitely many exceptions, every visit is formed of a single letter. It is now straightforward to argue that the only pin sequences that are permissible are $w=(\sfur\sfl\sfd\sfr)^\infty$ and its `reverse', $w=(\sful\sfr\sfd\sfl)^\infty$.

We now establish a lower bound for the growth rates of pin classes that visit three quadrants. Let $\lambda \approx 3.28481$ be the largest positive zero of $z^5-3z^4-z^3+z-1$.

\begin{prop}\label{prop-3-quadrants}
Let $w$ be an infinite pin sequence, such that $\C_w^\gridded$ visits precisely three quadrants infinitely often. Then $\gr(\C^\gridded_w) \geq \lambda$.
\end{prop}

\begin{proof}
Without loss of generality, suppose that $\C^\gridded_w$ visits quadrants 1--3 infinitely often. By truncating and appealing to Proposition~\ref{prop-kill-finite-visits} as necessary, without loss of generality we can assume that $w$ never visits quadrant 4.

We first claim that the following $\boxplus$-indecomposable permutations belong to the box-interior, $\C_w^\boxplus$:
\[
\nept,\ \nwpt,\ \swpt,\ \netwo,\ \swtwo,\ \nthree,\ \wthree
\]
This claim is clear for the three singletons (since $w$ visits all of quadrants 1--3 infinitely often), so now consider any index $k$ such that $w_k$ is an arrival in quadrant 1. This arrival must be from quadrant 2, so $w_{k-1} = \sful$ and $w_k=\sfru$. Furthermore, the next letter $w_{k+1}$ must also belong to quadrant 1, hence $w_{k+1}=\sfur$. Thus, every arrival into quadrant 1 defines a copy of \nthree\ (and hence also \netwo), and there are arbitrarily many such arrivals so these patterns belong to $\C_w^\boxplus$. A similar argument applies in quadrant 3 for the other two patterns.

Next, suppose that \nwtwo\ $\in\C_w^\boxplus$. Then $\C_w^\boxplus$ contains the $\boxplus$-closure of the following eight permutations:
\[
\nept,\ \nwpt,\ \swpt,\ \netwo,\ \nwtwo,\ \swtwo,\ \nthree,\ \wthree
\]
To find the generating function of the $\boxplus$-closure of these permutations, we note that the box indecomposables in quadrant 1 have generating function $z+z^2$, as do those in quadrant 3. By Corollary~\ref{cor-cartier-foata}, the generating function of this $\boxplus$-closure is therefore
\[
\frac1{1-(3z+3z^2+2z^3)+(z+z^2)^2} = \frac1{1-3z-2z^2+z^4},
\]
from which we can see that $\gr(\C_w) \geq 3.542$ (the largest real root of $z^4-3z^3-2z^2+1$).

Thus, we may now assume that $\C_w^\boxplus$ does not contain \nwtwo. By a similar argument to that used earlier, we can find arbitrarily many indices $k$ such that $w_{k-2}w_{k-1}w_kw_{k+1} w_{k+2} = \sfdl\sfld\sful\sfru\sfur$, and hence \nwfive\ $\in\C_w^\boxplus$. 

Thus, in this case $\C_w^\boxplus$ contains the following eight permutations.
\[
\nept,\ \nwpt,\ \swpt,\ \netwo,\ \swtwo,\ \nthree,\ \wthree,\ \nwfive.
\]
The generating function of this $\boxplus$-closure is
\[
\frac1{1-(3z+2z^2+2z^3+z^5)+(z+z^2)^2} = \frac1{1-3z-z^2+z^4-z^5},
\]
and therefore $\gr(\C_w) \geq \lambda$ (the largest real root of $z^5-3z^4-z^3+z-1$). 
\end{proof}

In fact, the \emph{smallest} pin class in three quadrants is given by the word $w=(\sfur\sfl\sfd\sfl\sfu\sfr)^\infty$ and has growth rate approximately 3.36132 (equal to the largest real root of $z^5-4z^4+2z^3+z^2-2z+1$). The details of the argument that establishes this are similar to, but more intricate than, that given above, and requires some additional theory presented in~\cite{jarvis:pin-classes-i}.

%
%
%
%
%
%
\section{Visits to two quadrants and periodic words}\label{sec-two-quadrants}

We have seen that when a pin sequence contains only finitely many visits, or contains arbitrarily many visits to three or four quadrants, then the growth rate of the corresponding pin class must equal $\kappa$ or exceed $\lambda \approx 3.28481$, respectively. 

It remains to consider the possible growth rates for pin classes whose sequences visit precisely two quadrants infinitely often.  By symmetry and Proposition~\ref{prop-kill-finite-visits}, we can restrict our attention to pin classes whose sequences only ever visit quadrants 1 and 2, and do so infinitely often.

Before we go further, we need to make the relationship between factors of a pin sequence and the $\boxplus$-indecomposable permutations more precise. First, as mentioned earlier there exist pin words that define $\boxplus$-decomposable permutations. This presents no problem for sufficiently long pin words in two quadrants:

\begin{lemma}[Lemma 6.1 of~\cite{bv:wqo-uncountable:}, see also~\cite{jarvis:pin-classes-i}]\label{lem-two-quadrant-indecs}
Let $w$ be a pin sequence that visits precisely two quadrants, and let $f$ be a finite factor of $w$ of length at least 3. Then $\pi_f$ is $\boxplus$-indecomposable. 
\end{lemma}

From an enumerative perspective, we would also like there to be a one-to-one correspondence between pin words appearing as factors in a pin sequence $w$, and box-indecomposable permutations in the pin class $\C_w$. In general this is not true -- for a full classification, see~\cite{jarvis:pin-classes-i} -- but in two quadrants it again holds for sufficiently long pin words.

\begin{lemma}[Proposition 6.3 of~\cite{bv:wqo-uncountable:}]\label{lem-two-quadrant-no-collisions}
Let $w$ be a pin sequence that visits precisely two quadrants, and let $f$ and $f'$ be finite factors of $w$ of length at least 4. If $\pi_f = \pi_{f'}$, then $f=f'$. 
\end{lemma}


%
%
\subsection{Pin sequences from binary words}\label{subsec-bin-to-pin-words}

As we are now restricting our attention to pin sequences that visit precisely two quadrants, we can introduce a method to construct these pin sequences from binary words. This construction is essentially the same as that used by~\cite{bv:wqo-uncountable:}, but here we undertake a slightly more careful enumerative analysis.

A pin sequence that is wholly contained in quadrants 1 and 2 must comprise a series of left and right steps, interleaved with up steps. Given an infinite binary sequence $b$, there is therefore a natural injection $\phi$ to a two-quadrant pin sequence $w=\phi(b)$, via $\phi(0)=\sfl\sful$ and $\phi(1)=\sfr\sfur$. Note that $\phi$ can never produce a pin sequence that begins with $\sful$ or $\sfur$, and thus in general it is not surjective. 

However, since we are primarily concerned with growth rates of pin classes, note that for any pin sequence $w$ that begins with $\sful$ or $\sfur$, we can simply remove the first letter to form the pin sequence $w'$, and then $\gr(\C_w)=\gr(\C_{w'})$ by Proposition~\ref{prop-truncate-gr}. Consequently, we can without loss of generality restrict our attention to pin sequences that are the images under $\phi$ of binary sequences.

We now evaluate the complexity and recurrent complexity of $w=\phi(b)$, given the complexity $p_b$ and recurrent complexity $q_b$ of $b$, and thus count the number of $\boxplus$-indecomposable permutations of each length in $\C_w$ and $\C_w^\boxplus$. This argument is essentially the same as that given by~\cite{jarvis:pin-classes-i}, but we include it here for completeness.

For a binary factor $a$ of length $n$ appearing in $b$, $\phi(a)$ is a pin word of length $2n$ appearing as a factor of $w=\phi(b)$. In addition, if a pin word $x$ is formed by removing the first and/or last letters of $\phi(a)$, then (trivially) $x$ is also a factor of $\phi(b)$. Furthermore, the only occurrences of $x$ as a factor in $\phi(b)$ are those that embed in an occurrence of $\phi(a)$. Thus, each factor $a$ of length $n\geq 2$ in the binary word $b$ accounts for one word $\phi(a)$ of length $2n$, two words of length $2n-1$ (delete the first or last letter of $\phi(a)$), and one word of length $2n-2$ (delete both the first and last letters of $\phi(a)$). These considerations yield:

\begin{prop}\label{prop-bin-to-pin}
Let $b$ be an infinite binary sequence, and let $w = \phi(b)$. Then, for $n\geq 1$,
\begin{gather*}
p_w(n) = 
 \begin{cases}
  p_b(k) + p_b(k+1) & n=2k\\
  2p_b(k+1) &n=2k+1	
 \end{cases}\\
q_w(n) = 
 \begin{cases}
  q_b(k) + q_b(k+1) & n=2k\\
  2q_b(k+1) &n=2k+1.
 \end{cases}
\end{gather*}
\end{prop}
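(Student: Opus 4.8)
The plan is to count factors of $w=\phi(b)$ of a given length by sorting them according to where they begin relative to the block structure imposed by $\phi$. Recall that $\phi$ replaces each binary letter by a block of two pin letters, with $\phi(0)=\sfl\sful$ and $\phi(1)=\sfr\sfur$; the key structural feature is that in any such block the two letters are determined by one binary letter, and consecutive blocks are glued together with no interaction, so a factor of $w$ either starts at the left edge of a block or one letter into a block. I would make this precise first: every factor $x$ of $w$ of length $m$ occurs inside a unique minimal ``inflated factor'' $\phi(a)$, where $a$ is a binary factor of $b$, obtained from $\phi(a)$ by deleting at most the first letter and at most the last letter; and conversely, given a binary factor $a$ of length $n\geq 2$, the word $\phi(a)$ (length $2n$) and its three truncations — delete the first letter, delete the last letter, delete both (lengths $2n-1,\ 2n-1,\ 2n-2$) — are all factors of $w$. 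This is exactly the bookkeeping sketched in the paragraph preceding the statement, and I would just record it as the combinatorial core.

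The main step is then an injection-and-surjection argument at a fixed length. Fix $n=2k+1$ odd. A factor of $w$ of length $2k+1$ cannot be all-of-a-block-aligned; it must ``straddle'' so that it consists of $k$ full blocks plus one extra letter, either on the left end or on the right end. Deleting that extra letter in the appropriate direction recovers $\phi(a)$ for a binary factor $a$ of length exactly $k+1$ (after also extending to fill out the block at the opposite end), and distinct odd factors give distinct such data $(a,\text{side})$; conversely each binary factor $a$ of length $k+1$ yields exactly two pin factors of length $2k+1$, namely $\phi(a)$ with its first letter removed and $\phi(a)$ with its last letter removed, and these two are distinct since $\phi(a)$ begins with a letter from $\{\sfl,\sfr\}$ and ends with a letter from $\{\sful,\sfur\}$. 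Hence $p_w(2k+1)=2p_b(k+1)$. For $n=2k$ even, a factor of length $2k$ is either block-aligned — in which case it equals $\phi(a)$ for a binary factor $a$ of length $k$ — or offset by one, in which case it equals $\phi(a)$ with both its first and last letters removed for a binary factor $a$ of length $k+1$. These two families are disjoint (a block-aligned factor begins with a letter of $\{\sfl,\sfr\}$, an offset one begins with a letter of $\{\sful,\sfur\}$), and within each family the correspondence with binary factors is a bijection, giving $p_w(2k)=p_b(k)+p_b(k+1)$. The case $n=1$ is immediate since $w$ uses both letters of some $\phi(0)$ or $\phi(1)$ block structure: $p_w(1)$ equals the number of distinct pin letters appearing, which for any $b$ using both symbols is $p_b(1)+p_b(2)$ or $2p_b(1)$ as the formula dictates; I would check the small cases ($b$ eventually constant) do not cause an off-by-one.

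For the recurrent complexity $q_w$, I would observe that every step above respects recurrence: a binary factor $a$ occurs infinitely often in $b$ if and only if $\phi(a)$ occurs infinitely often in $w$ (since occurrences of $\phi(a)$ in $w$ are in bijection with occurrences of $a$ in $b$, by the uniqueness of the minimal inflated factor), and likewise a truncation of $\phi(a)$ is recurrent in $w$ iff $\phi(a)$ itself is, since any occurrence of a truncation lies inside an occurrence of $\phi(a)$. Therefore the same bijections, restricted to recurrent factors, give $q_w(2k)=q_b(k)+q_b(k+1)$ and $q_w(2k+1)=2q_b(k+1)$. I expect the only real obstacle to be the careful edge-case analysis — making sure the ``minimal inflated factor $\phi(a)$'' of a given pin factor is genuinely unique, and that the two truncations producing odd-length factors are always distinct and exhaust the odd-length factors — rather than anything structurally deep; the argument is essentially a clean accounting once the block-alignment dichotomy is set up.
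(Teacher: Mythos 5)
Your proposal is correct and follows essentially the same accounting as the paper: each binary factor $a$ of length $n$ corresponds bijectively to $\phi(a)$ (length $2n$), its two one-sided truncations (length $2n-1$), and its two-sided truncation (length $2n-2$), with injectivity guaranteed because the parity of a pin letter's position in $w$ (odd positions carry $\sflu,\sfru$; even positions carry $\sful,\sfur$) pins down the block alignment and hence recovers $a$ from any of these truncations. The paper gives this argument informally in the paragraph preceding the proposition and states the result as a consequence; your write-up makes the injectivity/disjointness and the recurrence-preservation explicit, but the underlying approach is identical.
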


By Lemmas~\ref{lem-two-quadrant-indecs} and~\ref{lem-two-quadrant-no-collisions}, every factor of length $n\geq 4$ uniquely defines a $\boxplus$-indecomposable permutation, and thus we have:

\begin{prop}\label{prop-bin-to-box}
Let $b$ be an infinite binary sequence, and let $w=\phi(b)$. The number of $\boxplus$-indecompo\-sable permutations in $\C_w$ of each length $n\geq 4$ is $p_b(k) + p_b(k+1)$ if $n=2k$, and $2p_b(k+1)$ if $n=2k+1$. Furthermore, if $w$ visits two quadrants infinitely often, then there are 2 $\boxplus$-indecomposable permutations of each length 1 and 2, and $2p_b(2)-2$ of length $3$.

Similarly, the number of $\boxplus$-indecomposable permutations in $\C_w^\boxplus$ of each length $n\geq 4$ is $q_b(k) + q_b(k+1)$ if $n=2k$, and $2q_b(k+1)$ if $n=2k+1$. Furthermore, if $w$ visits two quadrants infinitely often, then there are 2 $\boxplus$-indecomposable permutations of each length 1 and 2, and $2q_b(2)-2$ of length $3$.
\end{prop}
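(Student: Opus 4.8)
The plan is to combine the enumeration of factors from Proposition~\ref{prop-bin-to-pin} with the two correspondence results, Lemmas~\ref{lem-two-cell-indecs} and~\ref{lem-two-cell-no-collisions}. First I would handle the case $n \geq 4$: by Lemma~\ref{lem-two-cell-indecs}, every factor $f$ of $w$ of length at least $3$ yields a $\boxplus$-indecomposable permutation $\pi_f$, and by Lemma~\ref{lem-two-cell-no-collisions}, distinct factors of length at least $4$ yield distinct permutations. Conversely, Proposition~\ref{prop-indecomposable-is-factor} tells us that every $\boxplus$-indecomposable permutation of $\C_w$ arises as $\pi_{f}$ for some factor $f$ of $w$, and since $\pi_f$ determines $|f|$, a $\boxplus$-indecomposable permutation of length $n \geq 4$ must come from a factor of length exactly $n$. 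Hence for $n \geq 4$ the number of $\boxplus$-indecomposable permutations in $\C_w$ of length $n$ is exactly $p_w(n)$, and the stated formula follows by substituting the expressions for $p_w(2k)$ and $p_w(2k+1)$ from Proposition~\ref{prop-bin-to-pin}. The $\boxplus$-interior case $\C_w^\boxplus$ is identical, using that the relevant factor can be taken recurrent precisely when the corresponding permutation lies in $\C_w^\boxplus$ (this is the definition of the $\boxplus$-interior together with Proposition~\ref{prop-indecomposable-is-factor} applied inside the recurrent part), so the count becomes $q_w(n)$, and again we substitute the formulas for $q_w$.

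The remaining work is the small-length cases $n = 1, 2, 3$, which must be done by hand since Lemma~\ref{lem-two-cell-no-collisions} only applies for length $\geq 4$ and the singletons/length-2 patterns need direct inspection. For $n=1$: since $w$ visits quadrants $1$ and $2$ arbitrarily often, both \nept\ and \nwpt\ occur, and no other singleton does (quadrants $3,4$ are never visited), so $p_w(1)=2$. For $n=2$: the $\boxplus$-indecomposable permutations of length $2$ available are the two increasing/decreasing patterns straddling the origin horizontally, namely those drawn from a right-then-up or left-then-up step pair that crosses the vertical axis — I would check by enumerating the possible two-point box-indecomposable configurations in quadrants $1$ and $2$ that exactly two of them occur for any $w$ visiting both quadrants arbitrarily often, giving $p_w(2)=2$. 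For $n=3$: here I would count factors of $w$ of length $3$ directly. By Proposition~\ref{prop-bin-to-pin}, $p_w(3) = 2 p_b(2)$, i.e. there are $2p_b(2)$ factors of length $3$; but Lemma~\ref{lem-two-cell-indecs} guarantees each such factor gives a $\boxplus$-indecomposable permutation of length $3$, while the collision-freeness fails only for a controlled number of pairs. I expect precisely two such coincidences (corresponding, roughly, to the two ways a length-$3$ window can sit inside a $\phi(\cdot)$-block versus straddle two blocks producing the same three-point pattern), which accounts for the correction term, yielding $p_w(3) = 2p_b(2) - 2$. The $q$-versions of these three cases are identical word-for-word, since arbitrarily many visits to two quadrants forces these small patterns to be recurrent.

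The main obstacle I anticipate is pinning down the length-$3$ count $p_w(3) = 2p_b(2) - 2$: one must verify carefully that among the $2p_b(2)$ factors of $w$ of length $3$, the map to $\boxplus$-indecomposable permutations is surjective onto the set of length-$3$ indecomposables of $\C_w$ but has exactly one fibre of size larger than one, or two fibres of size two, etc. — the precise "$-2$" must be justified by an explicit check of which three-letter pin words (in the memory alphabet, restricted to quadrants $1$ and $2$) produce the same underlying permutation. This is the step where the structural input from Jarvis~\cite{jarvis:pin-classes-i} on $\boxplus$-decomposable and colliding pin words of small length is really needed; everything else is bookkeeping on top of Propositions~\ref{prop-indecomposable-is-factor} and~\ref{prop-bin-to-pin} and the two cited lemmas.
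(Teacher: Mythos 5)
Your approach matches what the paper does: the paper states the proposition with only the one-sentence observation that Lemmas~\ref{lem-two-cell-indecs} and~\ref{lem-two-cell-no-collisions} make length-$\geq 4$ factors inject into $\boxplus$-indecomposables, leaving surjectivity and the small cases implicit. You correctly identify that Proposition~\ref{prop-indecomposable-is-factor}, together with the observation $|f|=|\pi_f|$, supplies the surjection, so the $n\geq 4$ count is precisely $p_w(n)$ (respectively $q_w(n)$), and Proposition~\ref{prop-bin-to-pin} finishes that part.

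Two small corrections to your length-$2$ and length-$3$ discussion, though you already flag appropriate uncertainty there. For $n=2$, the two $\boxplus$-indecomposables are \netwo\ and \nwtwo, each lying wholly inside one quadrant; the length-$2$ gridded permutations that straddle the vertical axis, with one point in each of quadrants~1 and~2, are in fact box-\emph{decomposable} (one of the two points always lies in a corner region of the hull of the other together with the origin), so your description has the picture backwards. For $n=3$, a $\phi$-block has length~$2$, so no length-$3$ window sits inside a block; every length-$3$ factor straddles a block boundary, and the two collisions are both between a window straddling a $01$ transition and a window straddling a $10$ transition. Concretely $\sflu\sful\sfru$ (from $01$) coincides with $\sfur\sflu\sful$ (from $10$), and $\sful\sfru\sfur$ (from $01$) coincides with $\sfru\sfur\sflu$ (from $10$). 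The two windows arising from $00$ (wholly in quadrant~2) are distinct from each other, as are those from $11$, and there are no further coincidences, so the correction is exactly $-2$ whenever both $01$ and $10$ occur, which the two-cells-arbitrarily-often hypothesis guarantees. The rest of your write-up, including the passage to $\C_w^\boxplus$ via recurrent factors, is sound.
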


\subsection{A spectrum of growth rates}

Let $v= (\sfru\sfu\sfl\sfu)^\infty = \phi((10)^\infty)$. In the next two propositions, we show that $v$ is essentially the unique pin sequence in two quadrants for which $\C_v$ has the smallest possible growth rate.

\begin{prop}\label{prop-gr-nu}
$\gr(\C^\gridded_v) = \nu$, where $\nu\approx 3.06918$ is the largest real root of $z^4-3z^3-2$.	
\end{prop}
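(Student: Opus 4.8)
The plan is to compute the generating function for the $\boxplus$-interior $\C_v^\boxplus$ directly, read off its dominant singularity, and invoke Theorem~\ref{thm-jarvis-box-interior} to conclude $\gr(\C_v) = \gr(\C_v^\boxplus)$. Since $v = \phi((10)^*)$ is periodic, hence recurrent, Proposition~\ref{prop-w-recurrent} gives that $\C_v$ is $\boxplus$-closed and $\C_v = \C_v^\boxplus$, so we may work with $\C_v$ throughout. The strategy is to enumerate the $\boxplus$-indecomposable permutations wholly contained in quadrant~1 (and by symmetry those in quadrant~2), together with the remaining $\boxplus$-indecomposables that straddle both quadrants, and then apply the Cartier--Foata formula from Corollary~\ref{cor-cartier-foata} in the form displayed at the end of Section~\ref{sec-preliminaries}.

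First I would nail down the structure of the binary word $b = (10)^*$: its complexity is $p_b(1) = 2$ and $p_b(n) = 2$ for all $n\geq 1$, since the only factors of length $n$ are the two cyclic shifts of $(10)^{n}$ truncated to length $n$. Then Proposition~\ref{prop-bin-to-box} applies: the number of $\boxplus$-indecomposable permutations of length $n\geq 4$ in $\C_v$ is $p_b(k) + p_b(k+1) = 4$ when $n = 2k$, and $2p_b(k+1) = 4$ when $n = 2k+1$; while the small cases give $p_v(1) = p_v(2) = 2$ and $p_v(3) = 2p_b(2) - 2 = 2$. So the total count of $\boxplus$-indecomposables is $2, 2, 2, 4, 4, 4, \dots$, with generating function $2z + 2z^2 + 2z^3 + 4z^4/(1-z)$. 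Next I would split this count by quadrant: one must check, by examining which factors $\phi(a)$ sit entirely in quadrant~1, that the $\boxplus$-indecomposables in quadrant~1 are exactly the increasing oscillations realised as visits there (Proposition~\ref{prop-visits-are-oscillations}), count them by length, and do the same for quadrant~2, with the straddling permutations making up the rest; call the quadrant-$1$ and quadrant-$2$ generating functions $g(\ell_1) = g(\ell_2) =: h(z)$ and the straddling one $g(a)$. Then $f_{\C_v}(z) = \bigl(1 - (g(a) + 2h(z)) + h(z)^2\bigr)^{-1}$ (the $\ell_1\ell_3$ and $\ell_2\ell_4$ terms vanish since only quadrants~1 and~2 are occupied, and these do not commute).

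The main obstacle I anticipate is the careful bookkeeping in the quadrant split: determining precisely which lengths of oscillation occur as maximal visits in quadrants~1 and~2 under the word $(\sfru\sfu\sfl\sfu)^*$, and being sure the small-length corrections ($n = 1, 2, 3$) are assigned to the right generating functions — the general formulas in Proposition~\ref{prop-bin-to-box} only kick in for $n\geq 4$, so lengths $1,2,3$ must be handled by direct inspection of $\pi_v$. Once $g(a)$ and $h(z)$ are in hand as explicit rational functions, substituting into the Cartier--Foata expression and clearing denominators should produce $f_{\C_v}(z)$ with denominator proportional to $z^4 - 3z^3 - 2$ (up to a unit and reciprocation conventions), so that the dominant singularity is at $z = 1/\nu$ where $\nu$ is the largest real root of $z^4 - 3z^3 - 2$, giving $\gr(\C_v^\gridded) = \nu \approx 3.06918$. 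A final sanity check would be to verify numerically that the leading coefficients of $f_{\C_v}(z)$ match the expected counts of permutations of small length in $\C_v$.
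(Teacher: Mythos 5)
Your overall strategy matches the paper's: compute the counting sequence of $\boxplus$-indecomposable permutations (which you correctly find to be $2,2,2,4,4,4,\dots$, giving the generating function $g(z)=\tfrac{2z+2z^4}{1-z}$), invoke $\boxplus$-closure of $\C_v$ via Proposition~\ref{prop-w-recurrent}, and apply Cartier--Foata to obtain the class generating function. However, there is a concrete error in your Cartier--Foata step. You write $f_{\C_v}(z) = \bigl(1 - (g(a) + 2h(z)) + h(z)^2\bigr)^{-1}$, with a correction term $h(z)^2$. But as your own parenthetical remark observes, the only commutation rules are $\ell_1\ell_3$ and $\ell_2\ell_4$, and since $\C_v$ occupies only quadrants 1 and 2 we have $g(\ell_3)=g(\ell_4)=0$. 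Both correction products $g(\ell_1)g(\ell_3)$ and $g(\ell_2)g(\ell_4)$ therefore vanish \emph{entirely}, and the formula collapses to the plain
\[
f_{\C_v}(z) \;=\; \frac{1}{1 - \bigl(g(a) + g(\ell_1) + g(\ell_2)\bigr)} \;=\; \frac{1}{1 - g(z)}.
\]
There is no $h(z)^2$ term at all. Quadrant~1 and quadrant~2 are adjacent, not opposite, so a quadrant-1 indecomposable never commutes with a quadrant-2 one; the term you added would be justified only if both a quadrant and its opposite were occupied. As written, your formula would produce a wrong denominator polynomial unless $h(z)=0$, which it is not (a single point in quadrant 1 is $\boxplus$-indecomposable).

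This error also makes the quadrant-by-quadrant split, which you flag as the main anticipated obstacle, entirely unnecessary: once the correction terms vanish, only the \emph{total} generating function $g(z)$ of $\boxplus$-indecomposables enters, and you never need to determine $g(\ell_1)$, $g(\ell_2)$, or $g(a)$ individually. This is precisely what the paper does: it goes straight from the count $2,2,2,4,4,\dots$ to
\[
f_{\C_v}(z) = \frac{1}{1-\tfrac{2z+2z^4}{1-z}} = \frac{1-z}{1-3z-2z^4},
\]
whose dominant singularity gives $\gr(\C_v) = \nu$, the largest real root of $z^4-3z^3-2$. If you delete the $h(z)^2$ term and notice that the split by quadrant can be bypassed, your argument becomes identical to the paper's.
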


\begin{proof}
Since $v$ is periodic, $\C_{v}$ is $\boxplus$-closed by Proposition~\ref{prop-w-recurrent}. By inspection, there are 2 $\boxplus$-indecomposable permutations of each length $n=1$, 2 and 3 in $\C_v$. Since $v=\phi((01)^\infty)$ and $p_{(01)^\infty}(n)=2$ for all $n$, it follows by Proposition~\ref{prop-bin-to-box} that there are precisely 4 $\boxplus$-indecomposable permutations of each length $n\geq 4$ in $\C_v$. Thus the generating function for $\C_w$ is
\[f(z) = \frac1{1-\frac{2z+2z^4}{1-z}} = \frac{1-z}{1-3z-2z^4}\]
and the proposition follows.
\end{proof}

\begin{prop}[\cite{jarvis:pin-classes-i}]\label{prop-pins-two-quadrants}
Suppose that $w$ visits exactly two quadrants infinitely often. Then $\gr(\C^\gridded_w)\ge \nu$, with equality if and only if up to symmetry there exists a finite word $w'$ such that $w=w'(\sfl\sfu\sfr\sfu)^\infty$.
\end{prop}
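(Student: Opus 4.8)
\emph{Plan.} The plan is to push the whole question through the binary-word correspondence of Section~\ref{subsec-bin-to-pin-words} and then compare against the extremal word $(01)^\infty$. First I would perform the standard reductions from the beginning of Section~\ref{sec-two-quadrants}: use Proposition~\ref{prop-kill-finite-visits} to discard the finitely many visits to the other two quadrants, apply a symmetry of the grid so that the two quadrants visited arbitrarily often are $1$ and $2$, and use Proposition~\ref{prop-truncate-gr} to delete a leading letter so that $w = \phi(b)$ for an infinite binary word $b$. None of these steps changes $\gr(\C_w)$, which equals $\lgr(\C_w)$ because pin classes have growth rates, so it suffices to treat the reduced $w$. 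Since $w$ visits both quadrants arbitrarily often, both letters occur infinitely often in $b$, hence both are recurrent factors and $q_b(1) = 2$.

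\emph{Counting box-indecomposables.} Next I would read off the number $c_n$ of $\boxplus$-indecomposable permutations of length $n$ in $\C_w^\boxplus$ from Proposition~\ref{prop-bin-to-box}: namely $c_1 = c_2 = 2$, $c_3 = 2q_b(2)-2$, and for $n \ge 4$, $c_n = q_b(k)+q_b(k+1)$ if $n=2k$ and $c_n = 2q_b(k+1)$ if $n=2k+1$. Since $q_b$ is increasing by Theorem~\ref{thm-recurrent-morse-hedlund}, we get $q_b(n) \ge q_b(1) = 2$ for all $n$, and hence $c_n \ge d_n$ for all $n$, where $(d_n)_{n\ge 1} = (2,2,2,4,4,4,\dots)$ is exactly the sequence of $\boxplus$-indecomposable counts of $\C_v$ used in the proof of Proposition~\ref{prop-gr-nu}. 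Inspecting the formulas also shows that $c_n = d_n$ for every $n$ if and only if $q_b(n) = 2$ for every $n$.

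\emph{The inequality.} Then, because $\C_w^\boxplus$ and $\C_v$ are $\boxplus$-closed and live in only two quadrants, the commutation terms in Corollary~\ref{cor-cartier-foata} vanish, so
\[
f_{\C_w^\boxplus}(z) = \frac{1}{1 - h_w(z)}, \qquad f_{\C_v}(z) = \frac{1}{1 - h_v(z)},
\]
with $h_w(z) = \sum_{n\ge 1} c_n z^n$ and $h_v(z) = \sum_{n\ge 1} d_n z^n = \tfrac{2z+2z^4}{1-z}$. As in the earlier propositions, the dominant singularity is a simple pole at the least positive root $\rho_w$ (resp.\ $\rho_v$) of $h(z)=1$, and $\gr(\C_w^\boxplus) = 1/\rho_w$, $\gr(\C_v) = 1/\rho_v = \nu$. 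Since $h_w - h_v$ has nonnegative coefficients we get $\rho_w \le \rho_v$, and Theorem~\ref{thm-jarvis-box-interior} gives
\[
\lgr(\C_w) = \gr(\C_w) = \gr(\C_w^\boxplus) = 1/\rho_w \ge 1/\rho_v = \nu,
\]
which survives undoing the initial reductions; this proves the first assertion.

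\emph{The equality case, and the main obstacle.} For the rigidity statement I would argue: if $c_m > d_m$ for even one $m$ then $h_w(z) > h_v(z)$ for all $z>0$, so $h_w(\rho_v) > 1$, forcing $\rho_w < \rho_v$ and $\gr(\C_w) > \nu$. Hence $\gr(\C_w) = \nu$ forces $c_n = d_n$ for all $n$, so $q_b(n) = 2$ for all $n$; then $q_b$ is not strictly increasing, so $b$ is eventually periodic by Theorem~\ref{thm-recurrent-morse-hedlund}, say $b = ub'$ with $b'$ periodic, and since every factor of $b'$ is recurrent, $p_{b'} = q_{b'}$ agrees with $q_b$ and is identically $2$; a periodic binary word of complexity identically $2$ has primitive period $2$, so $b'$ is $(01)^\infty$ or $(10)^\infty$, and therefore, up to a shift and up to swapping the two letters, $b = u\,(01)^\infty$. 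Applying $\phi$ and reinstating the discarded symmetry and finite prefix yields $w = w'(\sfl\sfu\sfr\sfu)^*$ up to symmetry; the converse is immediate, since for such $w$ we may discard $w'$ by Proposition~\ref{prop-truncate-gr} and $(\sfl\sfu\sfr\sfu)^*$ is a shift of $v$, so $\gr(\C_w) = \gr(\C_v) = \nu$ by Proposition~\ref{prop-gr-nu}. The one genuinely delicate point is this equality analysis — extracting the exact normal form of $b$ from $q_b \equiv 2$ (which rests on the classical fact, here obtained by feeding the Morse--Hedlund results of Section~\ref{sec-preliminaries} through the recurrent complexity, that the alternating word is the unique minimal-complexity binary word) and carefully threading the symmetries and truncations back through the reduction; the analytic comparison in the third paragraph is routine.
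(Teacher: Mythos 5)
Your proof is correct, and the opening reductions, the use of Proposition~\ref{prop-bin-to-box} to read off the $\boxplus$-indecomposable counts, and the coefficient-by-coefficient comparison against the sequence for $\C_v$ are essentially what the paper does. Where you diverge is in the equality analysis. The paper argues concretely: it supposes that $00$ (resp.\ $11$) is a recurrent factor of $b$, computes that the $\boxplus$-indecomposable counts then dominate $2,2,4,6,6,\dots$ with generating function $(2z+2z^3+2z^4)/(1-z)$, and deduces $\gr(\C_w)\ge 3.24>\nu$; hence equality forces $00$ and $11$ to be non-recurrent, so $b$ is eventually alternating. Your route instead extracts from the analytic comparison that equality forces $q_b\equiv 2$, then invokes Theorem~\ref{thm-recurrent-morse-hedlund} (the recurrent Morse--Hedlund statement) to conclude $b$ is eventually periodic, and finally identifies the period as $2$ from the constant complexity. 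Both are sound; the paper's version is more elementary and self-contained (it never needs the full Morse--Hedlund rigidity at this point), while yours leans more heavily on the word-complexity machinery already set up in Section~\ref{sec-preliminaries}, which makes the mechanism of the extremal case more transparent and foreshadows how the same theorem is deployed later in Lemma~\ref{lem-sub-mu-periodic} and Theorem~\ref{thm-classification}. One small technical point worth making explicit if you were to write this up: to conclude $\gr(\C_w^\boxplus)=1/\rho_w$ and to evaluate $h_w(\rho_v)$ you implicitly need $\rho_v$ to lie strictly inside the radius of convergence of $h_w$; this holds because $c_n\le p_w(n)=O(2^{n/2})$ by Proposition~\ref{prop-bin-to-pin}, giving radius at least $1/\sqrt2>\rho_v$, but it is the kind of step a careful referee would want stated.
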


\begin{proof}
By symmetry and since $\C_w=\C_{w'}$ for any infinite suffix $w'$ of $w$ (by Proposition~\ref{prop-truncate-gr}), it suffices to consider pin sequences $w$ that only visit quadrants 1 and 2. Thus $w$ is a word over the alphabet $\{\sfur,\sful,\sflu,\sfru\}$, and indeed we may also assume that $w=\phi(b)$ for some binary sequence $b$ (by removing the first letter of $w$ if necessary).

We consider the recurrent complexity of $b$, and therefore that of $w$. Each arrival of $w$ into a quadrant is witnessed in $b$ by an occurrence of $01$ or $10$. Thus $q_b(1)=2$ and by Theorem~\ref{thm-recurrent-morse-hedlund} therefore $q_b(n)\geq 2$ for all $n$. Proposition~\ref{prop-bin-to-box} now implies that the number of $\boxplus$-indecomposable permutations in $\C_w^\boxplus$ of lengths $n=1,2,3,4,\dots$ is at least $2,2,2,4,\dots$.  The proof of Proposition~\ref{prop-gr-nu} now shows therefore that $\gr(\C_w) = \gr(\C_w^\boxplus) \geq \nu$.

Now suppose that $b$ contains $00$ as a recurrent factor. Then $q_b(3)\geq 3$, and so the sequence of  $\boxplus$-indecomposable permutations in $\C_w^\boxplus$ of lengths $n=1,2,3,4,\dots$ is at least $2,2,4,6,\dots$. This sequence has generating function
\[\frac{2z+2z^3+2z^4}{1-z}\]
and hence
\[
\gr(\C_w^\boxplus)\geq \gr\left(\frac{1-z}{1-3z-2z^3-2z^4}\right) > 3.24.
\]
(Recall that the growth rate of a generating function refers to the growth rate of the sequence of coefficients of the formal power series.)

Similarly, if $b$ contains $11$ as a recurrent factor, then again we have $\gr(\C_w^\boxplus) > 3.24$.

Thus, if $\gr(\C_w)=\nu$, then $b$ cannot contain infinitely many copies of $00$ or $11$, and hence $b=b'(01)^\infty$ for some finite word $b'$. We now take $w'=\phi(b')$, so that $w=w' (\sfl\sfu\sfr\sfu)^\infty$, and the proof follows by Proposition~\ref{prop-truncate-gr} and Proposition~\ref{prop-gr-nu}.
\end{proof}

As well as showing that the pin sequence $v$ is essentially the unique pin sequence of smallest growth rate, the above proof in fact also shows that if either of the binary factors $00$ or $11$ appears as recurrent factors in a binary word $b$, then $\gr(\C_{\phi(b)}) > 3.24$. Our next task is to investigate the two-quadrant classes that contain one or both of these factors infinitely often. 

For positive integers $k$ and $\ell$, Let $b_{k,\ell}= (0^k1^\ell)^\infty$, and let $w_{k,\ell} = \phi(b_{k,\ell}) = ((\sfl\sfu)^k(\sfr\sfu)^\ell)^\infty$ denote the periodic pin sequence of period $2(k+\ell)$. Set $\nu_{k,\ell}=\gr(\C_{w_{k,\ell}})$, and in the case where $k=1$, we suppress the `1' and write simply $\nu_\ell$. See Figure~\ref{fig-w-2-1} for an example. 

\begin{figure}
\centering
\begin{tikzpicture}[scale=.25]
	\plotpermgrid{16,19,10,13,4,7,0,3,1,5,2,9,6,11,8,15,12,17,14}
	\draw (7.5,1) -- (1) -| (3) |- (2) -| (5) |- (4) -| (7) |- (6) -| (9) |- (8) -| (11) |- (10) -| (13) |- (12) -| (15) |- (14) -| (17) |- (16) -| (19); 
	\draw[very thick] (7,.5) -- ++(0,19);
\end{tikzpicture}\par 
	\caption{The start of the infinite permutation corresponding to the word $w_{1,2}$.}\label{fig-w-2-1}
\end{figure}

We begin with the cases in which a pin sequence visits both quadrants 1 and 2 infinitely often, and infinitely many of those visits in each quadrant contain at least three points.

\begin{prop}\label{prop-w-2-2-gr}
Let $w$ be a pin sequence which visits only quadrants 1 and 2, and in which both $\sfr\sfu\sfr$ and $\sfl\sfu\sfl$ are recurrent factors. Then $\gr(\C_w) \geq \nu_{2,2} \approx 3.39752$, with equality if and only if $w$ is eventually equal to $w_{2,2}= \phi(b_{2,2})=(\sfl\sfu\sfl\sfu\sfr\sfu\sfr\sfu)^\infty $.
\end{prop}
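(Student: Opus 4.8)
The plan is to follow the template established in Propositions~\ref{prop-gr-nu} and~\ref{prop-pins-two-quadrants}: translate the hypotheses into a statement about the recurrent complexity $q_b$ of the underlying binary sequence $b$, use Proposition~\ref{prop-bin-to-box} to count the $\boxplus$-indecomposable permutations in $\C_w^\boxplus$, and then compare generating functions. By symmetry and Proposition~\ref{prop-truncate-gr}, we may assume $w=\phi(b)$ for a binary sequence $b$ in which both $00$ and $11$ are recurrent factors. For the lower bound, I would first pin down the smallest possible recurrent complexity of such a $b$: since both $00$ and $11$ recur, we have $q_b(2)=3$ (the factors $00,11$ and at least one of $01,10$, hence both by Theorem~\ref{thm-recurrent-morse-hedlund}'s injectivity applied backwards — actually one needs that a $0$-block and a $1$-block must be adjacent somewhere), so $q_b(2)\geq 3$, giving $q_b(2)=3$ as the minimum, $q_b(3)\geq 4$, $q_b(4)\geq 5$, and in general $q_b(n)\geq n+1$ if $b$ is not eventually periodic (Theorem~\ref{thm-recurrent-morse-hedlund}), with $q_b(n)\geq n+1$ achievable only by the periodic word $(0^21^2)^*$ up to the relevant symmetry. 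Then Proposition~\ref{prop-bin-to-box} converts this into: the number of $\boxplus$-indecomposables in $\C_w^\boxplus$ of lengths $1,2,3,4,5,6,\dots$ is at least $2,2,6,9,11,13,\dots$ (with the $q_b(k)+q_b(k+1)$ and $2q_b(k+1)$ formulas for $n\geq 4$, and the special small-length values), and a Corollary~\ref{cor-cartier-foata} computation of the box-closure generating function yields a denominator whose largest real root is $\nu_{2,2}$.

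The cleanest route for the equality case is to compute $\nu_{2,2}=\gr(\C_{w_{2,2}})$ directly. Since $w_{2,2}=\phi((0^21^2)^*)$ is periodic, $\C_{w_{2,2}}$ is $\boxplus$-closed by Proposition~\ref{prop-w-recurrent}; $p_{(0^21^2)^*}(n)=4$ for $n\geq 2$ (and $=2$ for $n=1$), so by Proposition~\ref{prop-bin-to-box} the $\boxplus$-indecomposable counts are $2,2,6,8,8,8,\dots$, giving generating function $\dfrac{1}{1-\left(2z+6z^3+8z^4+\frac{8z^5}{1-z}\right)}$; simplifying gives a rational function whose denominator, after clearing, has $\nu_{2,2}\approx 3.39752$ as its largest real root — and one should check this matches the $\nu_{2\ell}$-type polynomial $z^{8}-4z^{7}+3z^{6}-2z^{5}-z^{4}+2z^{3}+1$ from Theorem~\ref{thm-classification} with $\ell=2$. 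I would then note that any $b$ with $q_b$ strictly larger than the minimum at some length forces, via the same generating-function monotonicity argument used in Proposition~\ref{prop-pins-two-quadrants}, a strictly larger growth rate, so equality in $\lgr(\C_w)\geq\nu_{2,2}$ forces $q_b(n)=n+1$ for all $n$; by the Morse–Hedlund classification of words of complexity $n+1$ (these are exactly the eventually periodic words with the minimal period structure here, i.e. $b$ eventually periodic with period word a rotation of $0^21^2$), combined with the constraint that both $00$ and $11$ recur, $b$ must be eventually equal to $(0^21^2)^*$ up to symmetry, hence $w$ eventually equals $w_{2,2}$.

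The main obstacle I anticipate is the equality analysis: going from "$\lgr(\C_w)=\nu_{2,2}$" to "$w$ is eventually $w_{2,2}$" requires more than just bounding $q_b$ at one length. One needs that \emph{every} extra recurrent factor (at any length) strictly increases the growth rate, which is where Theorem~\ref{thm-recurrent-morse-hedlund} and Lemma~\ref{lem-q-constant-forever} do the work: if $q_b(n)>n+1$ for some $n$ then $q_b(m)\geq m+2$ for all $m\geq n$ (by Lemma~\ref{lem-q-constant-forever} applied to the increasing function $q_b-(\cdot)$... more precisely, $q_b$ increasing plus the no-plateau behaviour forces a uniform gain), so $\C_w^\boxplus$ dominates a class with strictly larger growth rate, a contradiction. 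Then one must rule out the eventually-periodic words of complexity exactly $n+1$ that are \emph{not} rotations of $0^21^2$ — e.g. ruling out periods like $0^31^3$ or $0^k1^\ell$ with $k+\ell>4$, which have complexity $>n+1$, and checking that among period words giving complexity $n+1$ only those using two $0$'s and two $1$'s are compatible with both $00$ and $11$ recurring. This is a finite but slightly fiddly case check; everything else is the routine generating-function bookkeeping already exemplified in the preceding propositions.
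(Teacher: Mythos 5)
You have the right high-level template — pass to the binary sequence $b$ with $w=\phi(b)$, bound the recurrent complexity $q_b$, convert via Proposition~\ref{prop-bin-to-box}, and compare generating functions — but several of the concrete complexity claims are incorrect in a way that breaks both the lower-bound computation and the equality analysis.

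First, the minimal recurrent complexity is misidentified. If both $00$ and $11$ recur then, since $0$'s and $1$'s alternate between blocks, $01$ and $10$ must also recur, so $q_b(2)\geq 4$, not $q_b(2)=3$ as you settle on. The extremal word $(0011)^*$ has $q_b(1)=2$ and $q_b(n)=4$ \emph{constantly} for $n\geq 2$; its recurrent complexity is emphatically \emph{not} $n+1$. Consequently the correct minimal box-indecomposable counting sequence, via Proposition~\ref{prop-bin-to-box}, is $2,2,6,8,8,8,\dots$ (generating function $(2z+4z^3+2z^4)/(1-z)$), not $2,2,6,9,11,13,\dots$ as you state; the latter would actually give a \emph{larger} growth rate and so would not be a valid lower bound argument anyway. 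Relatedly, $\nu_{2,2}$ is not one of the $\nu_\ell$ of Theorem~\ref{thm-classification} (those have $k=1$); it is the largest real zero of $z^4-3z^3-4z-2$ and satisfies $\nu_{2,2}>\mu$, so the cross-check against $z^8-4z^7+3z^6-2z^5-z^4+2z^3+1$ is not meaningful.

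Second, the equality analysis needs a different mechanism. Since the extremal complexity is constant $4$ from $n=2$ on, ``forcing $q_b(n)=n+1$'' is not the right characterization, and the appeal to Lemma~\ref{lem-q-constant-forever} to derive a uniform $m+2$ gain doesn't go through. The paper instead observes that if any of $000$, $111$, $101$, $010$ is a recurrent factor then $q_b(3)\geq 5$, which already pushes the box-indecomposable sequence up to at least $2,2,6,9,10,10,\dots$ and hence $\gr(\C_w^\boxplus)\geq 3.423>\nu_{2,2}$. So equality forces all four of $000,111,101,010$ to be non-recurrent, and together with $00,11,01,10$ recurring this forces $b$ to be eventually $(0011)^*$, i.e.\ $w$ eventually $w_{2,2}$. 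That finite list of length-$3$ factors to exclude is what does the work; the Morse--Hedlund ``complexity $n+1$'' characterization you invoke applies to Sturmian-type extremes and is the wrong tool here.
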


\begin{proof}
First, $b_{2,2}=(0011)^\infty$ has complexity function $p_b(1)=2$ and $p_b(n)=4$ for $n\geq 2$. Thus $w_{2,2}$ has complexity function $p_w(1)=p_w(2)=2$, $p_w(3)=6$ and $p_w(n)=8$ for $n\geq 4$. This sequence has generating function $(2z+4z^3+2z^4)/(1-z)$. Since $w_{2,2}$ is periodic, it follows that 
\[
\nu_{2,2} = \gr(\C_{w_{2,2}}) = \gr\left( \frac{1-z}{1-3z-4z^3-2z^4}\right) \approx 3.39752.
\]
For reference $\nu_{2,2}$ is the largest real zero of $z^4-3z^3-4z-2$.

Now consider a pin sequence $w$ as in the statement of the proposition. As before, we may assume that $w=\phi(b)$ for some binary sequence $b$. 

Since $w$ contains both $\sfr\sfu\sfr$ and $\sfl\sfu\sfl$ as recurrent factors, $b$ must contain both $00$ and $11$ as recurrent factors. Furthermore, $b$ must also contain $01$ and $10$, and hence $p_b(2)\geq 4$. Therefore, the counting sequence of $\boxplus$-indecomposable permutations in $\C_w^\boxplus$ is at least $2,2,6,8,8,\dots$. This is the same sequence as the complexity of $w_{2,2}$, hence $\gr(\C_w)=\gr(\C_w^\boxplus) \geq \nu_{2,2}$. 

Now suppose that in addition, $b$ contains $000$ as a recurrent factor. In this case, $q_b(3)\geq 5$, and this implies that the counting sequence of $\boxplus$-indecomposable permutations in $\C_w^\boxplus$ is at least $2,2,6,9,10,10,\dots$. This has generating function $(2z+4z^3+3z^4+z^5)/(1-z)$ from which we find that $\gr(\C_w^\boxplus)\geq 3.423$.

Similar analysis applies if $b$ contains any of $111$, $101$ or $010$ as recurrent factors. Consequently, if $\gr(\C_w)=\nu_{2,2}$, then $b$ cannot contain any of $000$, $111$, $101$ or $010$ as recurrent factors, which implies that $b=b'(0011)^\infty$ for some finite word $b'$. The result now follows by the same arguments as used previously.
\end{proof}

We now investigate pin classes whose corresponding sequences visit quadrants 1 and 2 and whose growth rates are below $\nu_{2,2}$. We begin by considering the classes generated by the periodic sequences $w_{k,\ell}$. Since $\nu_{k,\ell} < \nu_{2,2}$, it follows from Proposition~\ref{prop-w-2-2-gr} that $k=1$ or $\ell=1$.

\begin{lemma}\label{lem-nu-1-ell}
We have $\nu_{2} = \nu_{1,2} \approx 3.24796$ (the largest real zero of $z^4-3z^3-2z-2$), while for $\ell>2$ the growth rate $\nu_{\ell}$ is equal to the largest real zero of
\[
z^{2\ell}-4z^{2\ell-1}+3z^{2\ell-2}-2z^{2\ell-3}-z^{2\ell-4}+2z^{2\ell-5}+1.
\]
\end{lemma}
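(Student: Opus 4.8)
The plan is to compute the generating function for $\C_{w_{1,\ell}}$ directly, using the fact that $w_{1,\ell}$ is periodic and hence $\C_{w_{1,\ell}}$ is $\boxplus$-closed by Proposition~\ref{prop-w-recurrent}. By Proposition~\ref{prop-bin-to-box}, once I know the complexity function $p_{b_{1,\ell}}$ of the binary word $b_{1,\ell}=(01^\ell)^*$, I obtain the number of $\boxplus$-indecomposable permutations of each length in $\C_{w_{1,\ell}}$, and then Corollary~\ref{cor-cartier-foata} (in the simplified two-cell form $f_\C = 1/(1 - g_1 - g_2 + g_1 g_2)$, where $g_1,g_2$ are the generating functions for box-indecomposables in quadrants~1 and~2, respectively, since there are no box-indecomposables straddling the two cells here) gives $f_{\C_{w_{1,\ell}}}$ as a rational function whose denominator, after clearing, has largest real root $\nu_\ell$.

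The first concrete step is the word-combinatorics computation: for $b = (01^\ell)^*$ I need $p_b(n)$ for all $n$. This periodic word has period $\ell+1$, so $p_b$ is eventually constant; one checks that $p_b(n) = n+1$ for $1 \le n \le \ell$ and $p_b(n) = \ell+1$ for $n \ge \ell$ (the distinct factors of length $n \le \ell$ being determined by the position of the unique $0$, or its absence). Feeding this into Proposition~\ref{prop-bin-to-box}: for $n = 2k$ the count of box-indecomposables is $p_b(k)+p_b(k+1)$, and for $n=2k+1$ it is $2p_b(k+1)$, with the small-length corrections $p_w(1)=p_w(2)=2$ and $p_w(3)=2p_b(2)-2 = 2\cdot 3 - 2 = 4$ when $\ell \ge 2$. (The case $\ell=2$ should be singled out because $p_b(2)=3=\ell+1$ already, so the ``ramp'' $p_b(n)=n+1$ only has one nontrivial term, which is why $\nu_2=\nu_{1,2}$ has a slightly different-looking minimal polynomial; treating $\ell > 2$ and $\ell=2$ separately, as the statement does, is the clean route.)

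Next I assemble the generating function. Writing $A(z) = \sum_n a_n z^n$ for the generating function of the box-indecomposable count $(a_n)$, and noting that the two quadrants are symmetric so that $g_1 = g_2 = A(z)/2$ — actually more carefully, the box-indecomposables wholly in quadrant~1 versus quadrant~2 must be separated, but by the left-right symmetry they contribute equally, and a box-indecomposable of length $\ge 3$ in two quadrants lies wholly in one quadrant only if\ldots here I should instead just use the single-letter-alphabet version: since (by Lemma~\ref{lem-two-cell-indecs}) every factor of length $\ge 3$ gives a box-indecomposable, and these are each an oscillation confined to one quadrant, $g(\ell_1)+g(\ell_2)$ together with $g(a)$ for the length-$1$ and length-$2$ pieces will need to be sorted out. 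The cleanest bookkeeping: box-indecomposables of length $1$ are the two singletons (one per quadrant), of length $2$ are the two monotone pairs (one increasing in quadrant~1, one decreasing in quadrant~2), and of length $\ge 3$ each lies in a single quadrant. So $g(\ell_1) = z + z^2 + (\text{odd-length pieces in quadrant 1}) + \cdots$; by symmetry $g(\ell_1)=g(\ell_2) =: h(z)$, $g(a)=0$, and $f_{\C} = 1/(1 - 2h + h^2) = 1/(1-h)^2$. Then $2h(z) = A(z)$ gives $f_\C = 4/(2-A(z))^2$, and $\gr = 1/\rho$ where $\rho$ is the least positive root of $A(z)=2$. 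Summing the geometric-plus-ramp series $A(z)$ in closed form and clearing denominators will produce exactly the stated degree-$2\ell$ polynomial; the $z^{2\ell-5}$ and $+1$ terms are the tell-tale signs of the $z^{\ell+1}$-periodicity interacting with the even/odd split, so the arithmetic must be done carefully but is routine.

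The main obstacle I anticipate is purely the algebraic bookkeeping: getting the finitely many low-length corrections exactly right (the $p_w(3)=2p_b(2)-2$ adjustment, and whether $\ell=2$ versus $\ell\ge 3$ changes the shape of the ramp), and then summing $A(z)$ — a finite ramp $\sum_{k} (\text{something}) z^{2k}$ plus geometric tails in both the even and odd gradings — and clearing to a polynomial without sign errors. There is no conceptual difficulty: periodicity hands us $\boxplus$-closedness, Proposition~\ref{prop-bin-to-box} hands us the indecomposable counts, and Corollary~\ref{cor-cartier-foata} hands us the generating function; the only real work is verifying that the resulting rational function's denominator is (a unit times) $z^{2\ell}-4z^{2\ell-1}+3z^{2\ell-2}-2z^{2\ell-3}-z^{2\ell-4}+2z^{2\ell-5}+1$ and that its largest real root is the relevant one (which follows since $f_\C$ has nonnegative coefficients, so its dominant singularity is real and positive by Pringsheim). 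For $\ell=2$ one simply records the specialization $z^4-3z^3-2z-2$ and checks it directly against the $\ell=2$ complexity data.
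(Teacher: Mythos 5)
There is a genuine gap in the final enumeration step: you have the wrong formula for the generating function of a box-closed class living in quadrants~1 and~2, and it would give a wrong polynomial.

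Two errors compound. First, the claim that every box-indecomposable of length $\ge 3$ ``is an oscillation confined to one quadrant'' is false. By Proposition~\ref{prop-visits-are-oscillations} a \emph{visit} is confined to one quadrant, but a \emph{factor} of $w$ of length $\ge 3$ (which by Lemmas~\ref{lem-two-cell-indecs} and~\ref{lem-two-cell-no-collisions} is what parametrises the box-indecomposables) can, and usually does, span several visits. For instance $\sfur\sflu\sful$ has one point in quadrant~1 and two in quadrant~2. So $g(a)\neq 0$; in fact most box-indecomposables contribute to $g(a)$, not to $g(\ell_1)$ or $g(\ell_2)$. Second, even granting your (false) claim that $g(a)=0$ and $g(\ell_1)=g(\ell_2)=h$, the formula $f_\C = 1/(1-2h+h^2)$ is not the one supplied by Corollary~\ref{cor-cartier-foata}: the quadratic correction terms there are $g(\ell_1)g(\ell_3)$ and $g(\ell_2)g(\ell_4)$ only, because only \emph{opposing} quadrants commute. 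Quadrants~1 and~2 are adjacent, not opposing, so there is no $g(\ell_1)g(\ell_2)$ term. With $g(\ell_3)=g(\ell_4)=0$, the Cartier--Foata formula collapses to
\[
f_\C(z) = \frac{1}{1 - \bigl(g(a)+g(\ell_1)+g(\ell_2)\bigr)} = \frac{1}{1 - g(z)},
\]
where $g$ is simply the total generating function for box-indecomposables -- exactly the quantity Proposition~\ref{prop-bin-to-box} already hands you, with no need to split by quadrant at all. Your proposed $f_\C = 4/(2-A)^2$ therefore places the dominant singularity at $A(z)=2$ rather than at $A(z)=1$; in the base case $\ell=1$ (Proposition~\ref{prop-gr-nu}, where $A(z)=(2z+2z^4)/(1-z)$) this yields a growth rate of roughly $2.1$ rather than $\nu_1\approx 3.069$, so the error is not harmless. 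Apart from this, the remainder of your plan -- computing $p_{b_{1,\ell}}(n)=\min(n+1,\ell+1)$, feeding it into Proposition~\ref{prop-bin-to-box} with the small-length corrections, summing the ramp to a closed form, and reciprocating the smallest positive singularity -- is exactly the paper's argument; once the enumeration formula is corrected to $f_\C=1/(1-g_{1,\ell})$, clearing $(1-z)^2$ in the denominator does produce $z^{2\ell}-4z^{2\ell-1}+3z^{2\ell-2}-2z^{2\ell-3}-z^{2\ell-4}+2z^{2\ell-5}+1$ as claimed.
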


\begin{proof}
By inspection, $b_{1,\ell} = (01^\ell)^\infty$ has complexity function
\[
p_{b_{1,\ell}}(n) =\begin{cases}
n+1&1\leq n\leq \ell\\
\ell+1&n> \ell.
\end{cases}
\]
Thus, the generating function for the $\boxplus$-indecomposable permutations in $\C_{w_{1,\ell}}$ is
\[
g_{1,\ell}(z)=\frac{2z+2z^3+3z^4+z^5+z^6+\cdots + z^{2\ell-1}}{1-z}  = \frac{2z-2z^2+2z^3+z^4-2z^5-z^{2\ell}}{(1-z)^2}
\]
if $\ell > 2$, and if $\ell=2$ then the generating function is 
\[
g_{1,2}(z) = \frac{2z+2z^3+2z^4}{1-z}.
\]
The result in the lemma now follows by standard combinatorial analysis.
\end{proof}

The following table gives the first few growth rates for these classes.

{\centering
\begin{tabular}{c|l|c}
	$\ell$& Sequence of box decomposables& $\nu_{\ell}=\gr(\C_{w_{1,\ell}})$\\\hline
	1& $2,2,2,4,\dots$&3.06918\\
	2& $2,2,4,6,6,\dots$&3.24796\\
	3& $2,2,4,7,8,8,\dots$&3.27963\\
	4& $2,2,4,7,8,9,10,10,\dots$&3.28248\\
	5& $2,2,4,7,8,9,10,11,12,12,\dots$&3.28274\\
	6& $2,2,4,7,8,9,10,11,12,13,14,14,\dots$&3.28277\\\hline
\end{tabular}\par}

%
%
%
%
%
\section{Non-periodic classes in two quadrants}\label{sec-non-periodic}

As $\ell\to\infty$, the family of counting sequences in the table at the end of the previous section converges to the sequence $2,2,4,7,8,9,\dots$ with generating function
\[
g_\star(z) = \frac{2z-2z^2+2z^3+z^4-2z^5}{(1-z)^2}.
\]
The growth rate of $1/(1-g_\star(z))$ is $\mu \approx 3.28277$, the largest real zero of $z^5-4z^4+3z^3-2z^2-z+2$. It is this growth rate that features in our two main theorems (Theorems~\ref{thm-at-mu} and~\ref{thm-classification}), although note that a priori, we have not yet exhibited a single pin class with growth rate $\mu$. In fact, certainly there cannot exist a periodic pin sequence $w$ that generates a set of box-indecomposable permutations with generating function $g_\star(z)$, since the number of box-indecomposables generated by such a word must be eventually constant.

In this section we establish Theorem~\ref{thm-at-mu} by constructing uncountably many pin classes of growth rate $\mu$, each defined by a recurrent sequence. Additionally, we exhibit a pin class defined by a sequence that is neither recurrent nor eventually recurrent, also of growth rate $\mu$. In Section~\ref{sec-classification}, we go on to prove that the only possible growth rates of pin classes below $\mu$ are those in the table at the end of the preceding section.

A \emph{Sturmian sequence} is an infinite binary sequence $s$ that has complexity function $p_s(n)=n+1$ for all $n$. One example of such a sequence is the \emph{Fibonacci sequence},
\[
0100101001001\cdots
\]
which is most easily defined as the fixed point of the substitution in which $0\mapsto 01$ and $1\mapsto 0$.

Here, we state only a minimal number of properties of these sequences, and refer the reader to Chapter~6 of~\cite{fogg:substitutions:} for further information.

\begin{enumerate}[(1)]
	\item There are uncountably many Sturmian sequences with pairwise distinct sets of factors.  
	\item All Sturmian sequences are recurrent.
	\item Sturmian sequences are the minimal sequences that are not periodic, in the sense that any sequence for which there exists $n$ such that the sequence contains the same number of factors of length $n+1$ as of length $n$ is necessarily eventually periodic.
\end{enumerate}

The following result follows by Proposition~\ref{prop-bin-to-pin}.

\begin{prop}\label{prop-sturmian-to-pin}
For a Sturmian sequence $s$, $\phi(s)$ contains $n+3$ distinct factors of length $n\geq 1$.	
\end{prop}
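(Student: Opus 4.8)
The plan is to apply Proposition~\ref{prop-bin-to-pin} directly, using the defining property of a Sturmian sequence. Recall that $\phi$ is the injection sending a binary sequence $b$ to the two-quadrant pin sequence $w=\phi(b)$ via $\phi(0)=\sfl\sful$ and $\phi(1)=\sfr\sfur$, and that Proposition~\ref{prop-bin-to-pin} expresses $p_w(n)$ in terms of $p_b$: namely $p_w(2k)=p_b(k)+p_b(k+1)$ and $p_w(2k+1)=2p_b(k+1)$. So the entire argument is a substitution followed by arithmetic.

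First I would recall that a Sturmian sequence $s$ has $p_s(n)=n+1$ for all $n\geq 1$, and also $p_s(0)=1$ (the empty factor), which will be needed for the case $n=2$, i.e.\ $k=1$ where $p_s(k)=p_s(1)$ but the formula $p_w(2k)=p_s(k)+p_s(k+1)$ with $k=1$ reads $p_s(1)+p_s(2)=2+3=5=1+3\cdot 1$; more carefully, I should double-check the small cases to be safe, but these are routine. Then for $n=2k$ with $k\geq 1$ we get $p_w(n)=p_s(k)+p_s(k+1)=(k+1)+(k+2)=2k+3=n+3$, and for $n=2k+1$ with $k\geq 0$ we get $p_w(n)=2p_s(k+1)=2(k+2)=2k+4=n+3$. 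In both parities this yields $p_{\phi(s)}(n)=n+3$, which is exactly the claimed statement.

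The only subtlety worth flagging is the boundary behaviour at $n=1$: here $n=2k+1$ with $k=0$, so the relevant formula is $p_w(1)=2p_b(1)$. For a Sturmian sequence $p_s(1)=2$ (it is a binary sequence using both letters, since otherwise it would be periodic), so $p_w(1)=4=1+3$, consistent with the formula. This matches the observation in the excerpt (e.g.\ in Proposition~\ref{prop-bin-to-box}) that a two-quadrant pin sequence visiting both cells has $p_w(1)=2$ — wait, that is a different claim there because $\phi(s)$ uses a four-letter alphabet $\{\sfur,\sful,\sflu,\sfru\}$, and here I am genuinely counting distinct length-$1$ factors among all four letters. So I would just make sure the statement is read as counting factors over the memory-encoding alphabet, in which case $n+3$ at $n=1$ gives $4$ and there is no conflict.

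In summary, there is essentially no obstacle here: the proof is a one-line invocation of Proposition~\ref{prop-bin-to-pin} together with $p_s(n)=n+1$, with a brief check that both parity cases and the small value $n=1$ give $n+3$. I would present it as: ``By Proposition~\ref{prop-bin-to-pin}, $p_{\phi(s)}(2k)=p_s(k)+p_s(k+1)=2k+3$ and $p_{\phi(s)}(2k+1)=2p_s(k+1)=2k+4$, so in all cases $p_{\phi(s)}(n)=n+3$.'' If anything, the only thing to be careful about when writing the final proof is not to mis-state which formula applies when $n$ is small, and to be consistent about the convention $p_s(0)=1$ if it is invoked.
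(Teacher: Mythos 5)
Your proof is correct and takes exactly the same route as the paper, which simply observes that the result follows from Proposition~\ref{prop-bin-to-pin} together with $p_s(n)=n+1$; the parity check you spell out is the whole content. (One minor slip: in your aside on $n=2$ you wrote ``$5=1+3\cdot 1$'', where you meant $5=2+3=n+3$; this does not affect the argument.)
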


We can now prove the following.

\begin{thm}\label{thm-at-mu}
There exist uncountably many distinct pin sequences $w$ with $\gr(\C_w)=\mu$.	
\end{thm}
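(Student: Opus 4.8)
The plan is to combine the uncountability of Sturmian sequences (property (1)) with the growth-rate computation implicit in the table at the end of Section~\ref{sec-two-quadrants}, applied via the binary-to-pin machinery of Section~\ref{subsec-bin-to-pin-words}. First I would take any Sturmian sequence $s$ and set $w=\phi(s)$. Since $s$ is Sturmian it has both $0$ and $1$ as factors (indeed $p_s(1)=2$), so the pin sequence $w$ visits both quadrants 1 and 2, and does so arbitrarily often because Sturmian sequences are recurrent; hence $\C_w$ is $\boxplus$-closed by Proposition~\ref{prop-w-recurrent}. By Proposition~\ref{prop-sturmian-to-pin}, the number of distinct factors of $w$ of length $n\geq 1$ is $n+3$, and since $w$ is recurrent these are all recurrent factors. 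Using Lemmas~\ref{lem-two-cell-indecs} and~\ref{lem-two-cell-no-collisions} (via Proposition~\ref{prop-bin-to-box}) the number of $\boxplus$-indecomposable permutations in $\C_w$ of length $n\geq 4$ equals $p_w(n)=n+3$, while the small cases give $2,2,4$ for $n=1,2,3$ (here $p_w(3)=2p_s(2)-2 = 2\cdot 3 - 2 = 4$). So the counting sequence of $\boxplus$-indecomposables is exactly $2,2,4,7,8,9,10,\dots$, which is the sequence with generating function $g_\star(z)$.

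Next I would read off the growth rate. Since $\C_w$ is $\boxplus$-closed and every $\boxplus$-indecomposable permutation appearing lies wholly in quadrant 1 or quadrant 2 except for finitely many exceptions, I need to be slightly careful about the Cartier–Foata correction terms $g(\ell_1)g(\ell_3)+g(\ell_2)g(\ell_4)$. Because $w$ never visits quadrants 3 or 4, we have $g(\ell_3)=g(\ell_4)=0$, so those correction terms vanish entirely and $f_{\C_w}(z) = 1/(1-g_\star(z))$ exactly. The dominant singularity of this rational (well, algebraic in the $\ell>2$ truncations, but here genuinely rational after clearing the $(1-z)^2$ denominator) function is the smallest positive root of $1-g_\star(z)=0$, i.e.\ of $(1-z)^2 - (2z-2z^2+2z^3+z^4-2z^5) = 0$, which rearranges to $z^5-4z^4+3z^3-2z^2-z+2$ (up to sign); its largest real zero is $\mu$, and correspondingly $\gr(\C_w)=\mu$. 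I would also invoke Proposition~\ref{prop-w-recurrent} (or directly the fact that $\C_w$ is $\boxplus$-closed and the counting sequence is eventually a polynomial times a geometric) to guarantee the growth rate genuinely exists and equals the reciprocal of that singularity rather than just being bounded by it.

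Finally, for the uncountability: there are uncountably many Sturmian sequences that are distinct in the sense that they contain different sets of factors (property (1)). Since $\phi$ is injective and, more to the point, distinct factor sets of $s$ yield distinct factor sets of $\phi(s)$ (a factor of $\phi(s)$ either is $\phi(a)$ for a binary factor $a$ of $s$, or differs from such by deleting a first/last letter, so the binary factor set of $s$ is recoverable from the factor set of $\phi(s)$), the map $s\mapsto \phi(s)$ produces uncountably many distinct pin sequences $w$, each with $\gr(\C_w)=\mu$. Strictly one should note that two distinct pin \emph{classes} might in principle coincide, but distinctness of factor sets of $w$ plus Lemma~\ref{lem-two-cell-no-collisions} shows the classes $\C_w$ themselves are pairwise distinct; in any case the theorem as stated only asks for uncountably many distinct pin \emph{sequences}.

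\textbf{Main obstacle.} The only genuinely delicate point is confirming that the counting sequence of $\boxplus$-indecomposables is precisely $2,2,4,7,8,9,\dots$ and not merely asymptotically so — in particular verifying the small-length values $n=1,2,3$ and checking that the Cartier–Foata correction terms contribute nothing (which they don't, since quadrants 3 and 4 are never visited). Everything else is an assembly of results already proved: Proposition~\ref{prop-bin-to-box} for the enumeration, Proposition~\ref{prop-w-recurrent} for $\boxplus$-closure, the Sturmian facts for uncountability, and a routine singularity analysis for the growth rate.
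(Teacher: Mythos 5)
Your proposal is correct and follows essentially the same route as the paper's proof: map Sturmian sequences to pin sequences via $\phi$, use Proposition~\ref{prop-sturmian-to-pin} together with Proposition~\ref{prop-bin-to-box} and Lemmas~\ref{lem-two-cell-indecs} and~\ref{lem-two-cell-no-collisions} to obtain the box-indecomposable counting sequence $2,2,4,7,8,9,\dots$, invoke $\boxplus$-closure (via recurrence and Proposition~\ref{prop-w-recurrent}) to conclude $f_{\C_w}=1/(1-g_\star(z))$ and hence $\gr(\C_w)=\mu$, and finish with the uncountability of Sturmian sequences together with Lemma~\ref{lem-two-cell-no-collisions} to separate the resulting classes. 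Your explicit remarks about the vanishing Cartier--Foata correction terms and the small-length values simply spell out what the paper handles more tersely.
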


\begin{proof}
Given a Sturmian sequence $s$, Proposition~\ref{prop-sturmian-to-pin} tells us that $\phi(s)$ contains precisely $n+3$ distinct factors of each length $n$. By considering small lengths separately, and noting that each pin factor of length $4$ or more gives rise to a distinct box-indecomposable permutation (by Lemmas~\ref{lem-two-quadrant-indecs} and~\ref{lem-two-quadrant-no-collisions}), we see that the sequence of box indecomposables in $\C_{\phi(s)}$ is $2,2,4,7,8,9,\dots$. Thus the box indecomposables in $\C_{\phi(s)}$ have generating function $g_\star(z)$.

Since the Sturmian sequence $s$ is recurrent, it follows that $\C_{\phi(s)}$ is $\boxplus$-closed, and thus the generating function for $\C_s$ is $1/(1-g_\star(z))$, and hence $\gr(\C_{\phi(s)}) = \mu$.

Finally, given two distinct Sturmian sequences $s$ and $t$, the fact that $s$ and $t$ have distinct sets of factors means that $\phi(s)$ and $\phi(t)$ also have distinct sets of factors, and thus by Lemma~\ref{lem-two-quadrant-no-collisions} $\C_{\phi(s)}$ and $\C_{\phi(t)}$ have distinct sets of box-indecomposable permutations. The result now follows by property (1) above.	
\end{proof}

Our second task in this section is to exhibit one further pin class with growth rate $\mu$, which is \emph{not} box-closed. Define $b^{(1)} = 10$, and for $i\geq 2$ iteratively define
\[
b^{(i)} = b^{(i-1)}1^i0.
\]
Define $b^\star = \lim_{i\to\infty}b^{(i)} = 10\,110\,1110\,11110\,111110\,\cdots$, and let $w^\star = \phi(b^\star)$. Note that $b^\star$ (and hence also $w^\star$) is not periodic or recurrent, since any factor of the form $01^i0$ appears precisely once in $b^\star$. Consequently, the class $\C_{w^\star}$ is not box-closed, but we can appeal to Theorem~\ref{thm-jarvis-box-interior}

\begin{prop}\label{prop-liouville-V}
$\gr(\C_{w^\star}) = \mu$.
\end{prop}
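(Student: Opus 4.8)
The plan is to compute the growth rate of $\C_{w^\star}$ via its $\boxplus$-interior, using Theorem~\ref{thm-jarvis-box-interior} to reduce to $\gr(\C_{w^\star}^\boxplus)$ and then Proposition~\ref{prop-w-recurrent}-style reasoning (adapted to the box interior) to get a clean generating function. First I would identify the recurrent factors of $b^\star$: every factor of the form $01^i0$ occurs exactly once, so the only recurrent binary factors are those that occur inside arbitrarily long blocks $1^i$ together with transitions $01$ and $10$ — concretely, the recurrent factors of $b^\star$ are exactly those of the eventually periodic word $(01)^*$ shifted, no wait — more carefully, a factor is recurrent in $b^\star$ iff it appears in infinitely many of the blocks $1^i0$, i.e. iff it is of the form $1^j$, $1^j0$, $01^j$, or $01^j$ with at most one $0$; so the recurrent binary factors of length $n$ are $1^n$, $0 1^{n-1}$, $1^{n-1}0$, $1^{a}01^{b}$ with $a+b=n-1$ and $a,b\geq 1$ — hmm, but $1^a01^b0$ with two zeros is not recurrent. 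So $q_{b^\star}(n) = $ (number of factors with zero $0$'s, which is $1$) $+$ (number with exactly one $0$, which is $n$) $= n+1$. Thus $b^\star$ has the same recurrent complexity as a Sturmian sequence: $q_{b^\star}(n) = n+1$.

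Next I would invoke Proposition~\ref{prop-bin-to-box}: since $q_{b^\star}(n)=n+1$, the number of $\boxplus$-indecomposable permutations in $\C_{w^\star}^\boxplus$ of length $n\geq 4$ is $q_{b^\star}(k)+q_{b^\star}(k+1) = 2k+3$ for $n=2k$ and $2q_{b^\star}(k+1)=2k+4$ for $n=2k+1$, while for small lengths $q_{w^\star}(1)=q_{w^\star}(2)=2$ and $q_{w^\star}(3)=2q_{b^\star}(2)-2 = 4$. This is precisely the sequence $2,2,4,7,8,9,\dots$ whose generating function is $g_\star(z)$. Then I would argue that $\C_{w^\star}^\boxplus$ is $\boxplus$-closed (this is the analogue of Proposition~\ref{prop-w-recurrent} applied to the box interior — any two permutations built from recurrent factors can be combined because a recurrent factor can be found arbitrarily far along the word without using its initial letter, exactly the argument sketched after the definition of $\C_w^\boxplus$), so its generating function is $1/(1-g_\star(z))$, whose dominant singularity gives growth rate $\mu$. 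Finally, $\gr(\C_{w^\star}) = \gr(\C_{w^\star}^\boxplus) = \mu$ by Theorem~\ref{thm-jarvis-box-interior}.

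The main obstacle is the careful determination of $q_{b^\star}$, i.e. correctly enumerating the recurrent factors of $b^\star$. One must check that factors containing two or more $0$'s are genuinely non-recurrent (they sit inside a unique $\dots 1^i 0 1^{i+1} 0 \dots$ window, since the block lengths are strictly increasing and hence each consecutive pair of block-lengths occurs once), and that every factor with at most one $0$ really does recur (it embeds in $1^i 0$ for all sufficiently large $i$). A secondary point requiring care is the passage to the box interior being $\boxplus$-closed: I would spell out that for $\sigma \leq \pi_u$ with $u$ recurrent and $\tau$ $\boxplus$-indecomposable with $\tau \leq \pi_{u'}$, $u'$ recurrent, one can realize $\sigma \boxplus \tau$ inside $\pi_{w^\star}$ by locating a copy of $u'$ beyond the prefix supporting $\sigma$ and disjoint from its last used point — this uses that $u'$ occurs infinitely often — and that the resulting permutation still lies in $\C_{w^\star}^\boxplus$ because $u u'$-type factors (suitably chosen) are themselves recurrent; alternatively one can cite that $\gr(\C_{w^\star}^\boxplus)$ equals the growth rate of the $\boxplus$-closure of its indecomposables, which has the stated generating function. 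Everything else is the routine singularity analysis already carried out for $g_\star$ in the paragraph preceding the statement.
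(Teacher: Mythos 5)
Your proposal takes essentially the same route as the paper: identify the recurrent factors of $b^\star$ as precisely those containing at most one $0$ (so $q_{b^\star}(n)=n+1$), feed this through Proposition~\ref{prop-bin-to-box} to get the box-indecomposable counting sequence $2,2,4,7,8,9,\dots$ with generating function $g_\star(z)$, and conclude $\gr(\C_{w^\star})=\mu$ via Theorem~\ref{thm-jarvis-box-interior}. The paper phrases the recurrent-factor analysis as a claimed set equality between $\C_{w^\star}^\boxplus$ and the $\boxplus$-closure of $\{\pi_w : w$ a factor of $\phi(1^k01^\ell)\}$ rather than via $q_{b^\star}$, but the content is identical.

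One caution on a secondary point you raise: your sketched from-scratch argument that $\C_{w^\star}^\boxplus$ is $\boxplus$-closed (``locate a copy of $u'$ beyond the prefix supporting $\sigma$'') does not survive scrutiny when $w$ itself fails to be recurrent, because a factor of $w^\star$ that contains both a copy of $u$ and a later disjoint copy of $u'$ will in general contain two or more left steps and hence is \emph{not} a recurrent factor, so exhibiting $\sigma\boxplus\tau$ inside $\pi_{w^\star}$ in this way does not by itself witness membership of $\C_{w^\star}^\boxplus$. The paper does not attempt such a direct argument; it relies on the $\boxplus$-closedness of the box interior (and the reduction $\gr(\C_w)=\gr(\C_w^\boxplus)$) as facts imported from~\cite{jarvis:pin-classes-i}, which is also what your parenthetical fallback does and is the safer choice here.
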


\begin{proof}
We claim that 
\[ \C_{w^\star}^\boxplus = \bigboxplus \left\{ \pi^\gridded_w : w \text{ is a factor of }\phi(1^k01^\ell)=(\sfr\sfu)^k\sfl\sfu(\sfr\sfu)^\ell\text{ for some }k,\ell\right\}.\]
That the left hand side contains the right is clear, since every sequence of the form $1^k01^\ell$ is a recurrent factor of $b^\star$. Conversely, note that any factor of $w^\star$ which contains two or more left steps appears precisely once in $w^\star$. Thus, the only factors of $w^\star$ that are recurrent are those that contain at most one left step, and these are precisely the factors that appear in the right hand side of the claimed equality.


It is now straightforward to check that $\C_{w^\star}^\boxplus$ contains $n+3$ box-indecomposable permutations of each length $n\geq 4$, while for $n=1,2,3$ there are $2,2,4$ box-indecomposable permutations, respectively. By the earlier calculations, this tells us that $\gr(\C_{w^\star}^\boxplus)=\mu$, and hence $\gr(\C_{w^\star})= \mu$ by Theorem~\ref{thm-jarvis-box-interior}.
\end{proof}

%
%
%
%
%
%
\section{Classification of pin classes below \texorpdfstring{$\mu$}{mu}}\label{sec-classification}

We now complete a full classification of the pin classes whose growth rates are less than $\mu$. Such pin classes must be defined by sequences that visit at most two quadrants infinitely often (by Propositions~\ref{prop-4-quadrants} and~\ref{prop-3-quadrants}), and Proposition~\ref{prop-kill-finite-visits} allows us to restrict our attention to pin sequences that visit at most two quadrants. Consequently, we can now proceed by studying infinite binary sequences, and utilise the injection~$\phi$ described in Subsection~\ref{subsec-bin-to-pin-words}.

\begin{lemma}\label{lem-sub-mu-periodic}
For any infinite binary sequence $b$, if $\gr(\C_{\phi(b)})<\mu$, then $b$ is eventually periodic.	
\end{lemma}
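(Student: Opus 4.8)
The plan is to prove the contrapositive: if $b$ is not eventually periodic, then $\gr(\C_{\phi(b)}) \geq \mu$. By Theorem~\ref{thm-jarvis-box-interior} it suffices to work with $\C_{\phi(b)}^\boxplus$, so the real target is to show that the recurrent complexity $q_b$ is large enough. First I would invoke Theorem~\ref{thm-recurrent-morse-hedlund}: if $b$ is not eventually periodic, then $q_b$ is strictly increasing, hence $q_b(n) \geq n+1$ for all $n$ (using $q_b(1)\geq 2$, which holds since a sequence visiting two quadrants arbitrarily often contains both $0$ and $1$ recurrently). Feeding $q_b(n)\geq n+1$ into Proposition~\ref{prop-bin-to-box} gives a lower bound on the number of $\boxplus$-indecomposables in $\C_{\phi(b)}^\boxplus$ of each length; comparing against the sequence $2,2,4,7,8,9,\dots$ with generating function $g_\star(z)$ shows $\gr(\C_{\phi(b)}^\boxplus) \geq \mu$, and we are done.

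The subtlety is that $q_b(n) \geq n+1$ alone is \emph{not} quite enough at the small lengths $n=1,2,3$: Proposition~\ref{prop-bin-to-box} translates $q_b$ into box-indecomposable counts only for pin-lengths $\geq 4$, and separately records $q_w(1)=q_w(2)=2$, $q_w(3)=2q_b(2)-2$. So I would handle two cases according to whether $q_b(2)=2$ or $q_b(2)\geq 3$. If $q_b(2)\geq 3$ (equivalently $b$ contains $00$ or $11$ as a recurrent factor), then actually one of the earlier estimates already gives a growth rate bound comfortably exceeding $\mu$ — indeed the argument in the proof of Proposition~\ref{prop-pins-two-quadrants} shows $\gr \geq 3.24$ in that case is too weak, so instead I would note that if $b$ is not eventually periodic and contains $00$ recurrently then $q_b(3)\geq 4$ and more carefully $q_b$ strictly increasing from $q_b(2)\geq 3$ gives $q_b(n)\geq n+1$ with a genuine head start; chasing this through yields a bound strictly above $\mu$. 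If $q_b(2)=2$, then $b$ avoids $00$ and $11$ as recurrent factors, which (as in the proof of Proposition~\ref{prop-w-2-2-gr} / Proposition~\ref{prop-pins-two-quadrants}) forces $b$ to be eventually equal to $(01)^*$ — hence eventually periodic, contradicting our assumption. So the only surviving case is $q_b(2)\geq 3$ with $b$ not eventually periodic, where $q_b(n)\geq n+1$ together with the head start at length $2$ suffices.

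Making this precise, in the surviving case the box-indecomposable counting sequence of $\C_{\phi(b)}^\boxplus$ dominates $2,2,4,7,8,9,10,\dots$ term by term (the length-$3$ term is $2q_b(2)-2\geq 4$, and for $n=2k, 2k+1$ with $k\geq 2$ one gets $q_b(k)+q_b(k+1)\geq (k+1)+(k+2)$ and $2q_b(k+1)\geq 2(k+2)$, which match or exceed the coefficients of $g_\star$). Since the generating function $1/(1-g_\star(z))$ has dominant singularity at $1/\mu$, and adding nonnegative terms to the indecomposable generating function can only pull the dominant singularity of $1/(1-g(z))$ inward (equivalently, increase the growth rate), we conclude $\gr(\C_{\phi(b)}^\boxplus) \geq \mu$, and hence $\gr(\C_{\phi(b)}) \geq \mu$ by Theorem~\ref{thm-jarvis-box-interior}.

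The main obstacle I anticipate is the bookkeeping at small lengths: the clean inequality $q_b(n)\geq n+1$ from Morse--Hedlund only controls $q_b$ asymptotically, but the translation through $\phi$ in Proposition~\ref{prop-bin-to-box} is sensitive at $n=1,2,3$, and the ``borderline'' Sturmian case ($q_b(n)=n+1$ exactly) is exactly where $\gr=\mu$ — so I must be careful that non-eventually-periodic sequences which are \emph{not} Sturmian genuinely give $q_b(n)\geq n+2$ for some $n$, pushing the growth rate up to or past $\mu$, while Sturmian (and eventually-Sturmian, which is not eventually periodic) sequences land exactly at $\mu$. In fact this shows the lemma's conclusion is slightly informal: ``eventually periodic'' in the statement should be read in light of Theorem~\ref{thm-at-mu}, and the honest content is that $\gr < \mu$ forces eventual periodicity, which is what the contrapositive above delivers. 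Verifying the term-by-term domination monotonicity claim (that enlarging $g$ enlarges the growth rate of $1/(1-g)$) is routine via Pringsheim's theorem, so I would not belabour it.
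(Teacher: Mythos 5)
Your contrapositive argument is correct and uses exactly the same ingredients as the paper: Theorem~\ref{thm-recurrent-morse-hedlund} to get strict increase of $q_b$ when $b$ is not eventually periodic, the translation via Proposition~\ref{prop-bin-to-box}, and a coefficientwise comparison of the resulting $\boxplus$-indecomposable counting sequence against the Sturmian one with generating function $g_\star$. The paper packages this slightly more compactly -- it picks the smallest $n$ with $q_b(n)<n+1$, deduces $q_b(n)=q_b(n-1)$, and concludes from the second part of Theorem~\ref{thm-recurrent-morse-hedlund} -- but the logic is the same; two small things to tidy in yours: the case split on $q_b(2)$ is superfluous (non-eventual-periodicity already forces $q_b(1)=2$ and strict increase gives $q_b(2)\ge 3$, so the $q_b(2)=2$ branch is vacuous), and your intermediate claim that this case ``yields a bound strictly above $\mu$'' is not correct -- Sturmian sequences achieve $\gr=\mu$ exactly, as you yourself note at the end.
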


\begin{proof}
Since $\gr(\C_{\phi(b)})<\mu$ and $\mu$ is the growth rate of the pin class associated with Sturmian sequences, the recurrent complexity function $q_b$ must satisfy, for some $n$, $q_b(n) < n+1$. 

Pick the smallest such $n$, so that $q_b(n-1) \geq n$. Since $q_b(n)\geq q_b(n-1)$ (by the first part of Theorem~\ref{thm-recurrent-morse-hedlund}), it follows that $q_b(n) = q_b(n-1)=n$. Now, by the second part of Theorem~\ref{thm-recurrent-morse-hedlund} we see that $b$ is eventually periodic.
\end{proof}

With this established, we now narrow the range of potential binary sequences that give rise to classes with growth rate below $\mu$.

\begin{prop}\label{prop-not-above-sturmian}
Let $b$ be a binary sequence for which $q_b(n) > n+1$ for some $n$. Then $\gr(\C_{\phi(w)}) \geq \mu$.
\end{prop}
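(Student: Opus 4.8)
The plan is to show that if the recurrent complexity of a binary sequence $b$ exceeds $n+1$ for some $n$, then the counting sequence of box-indecomposable permutations in $\C_{\phi(b)}^\boxplus$ dominates, term-by-term from some point on, the sequence $2,2,4,7,8,9,\dots$ whose associated class has growth rate $\mu$. Since $\gr(\C_{\phi(b)}) = \gr(\C_{\phi(b)}^\boxplus)$ by Theorem~\ref{thm-jarvis-box-interior}, and since having a term-wise larger counting sequence for the indecomposables forces a growth rate at least as large (the generating function $1/(1-g(z))$ has a dominant singularity that moves left as the coefficients of $g$ increase), this will give the conclusion.

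First I would invoke Theorem~\ref{thm-recurrent-morse-hedlund}: $q_b$ is increasing, and if $b$ is not eventually periodic then $q_b$ is strictly increasing, so $q_b(n) \geq n+1$ for all $n$ automatically. So the only way to have $q_b(m) > m+1$ for some $m$ while not already dominating the Sturmian complexity $n+1$ everywhere would be the eventually-periodic case — but there $q_b$ is eventually constant, and if it ever exceeds $n+1$ then it must have been strictly increasing up to that point, giving $q_b(n) \geq n+1$ for all $n$ in the relevant range anyway. The cleanest formulation: from $q_b(m) > m+1$ together with monotonicity of $q_b$ (first part of Theorem~\ref{thm-recurrent-morse-hedlund}), we get $q_b(n) \geq n + 2$ for all $n \geq m$, and $q_b(n)\geq n+1$... actually let me be careful — monotonicity only gives $q_b(n) \geq q_b(m) > m+1$ for $n\geq m$, not $q_b(n)\geq n+2$. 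But strict monotonicity does: if $b$ is not eventually periodic, $q_b$ is strictly increasing so $q_b(n)\geq q_b(m) + (n-m) > n+1$, i.e. $q_b(n)\geq n+2$ for $n\geq m$; and for $n \le m$ strict increase from $q_b(1)\geq 2$ gives $q_b(n)\geq n+1$. If $b$ \emph{is} eventually periodic, I would handle it directly: then $\C_{\phi(b)}$ is essentially a $\C_{w_{k,\ell}}$-type class or a finite modification, and Lemma~\ref{lem-nu-1-ell} together with the table shows that once $q_b$ reaches or exceeds $n+2$ for some $n$ — equivalently the period is large enough — the growth rate is at least $\nu_\ell$ for $\ell$ large, which is still $<\mu$... so this needs more thought; the eventually periodic case with $q_b(m)>m+1$ must in fact produce something with growth rate $\geq \mu$, and I would pin this down by noting that $q_b(m) > m+1$ forces $b$ to have a recurrent factor not of the form $1^k 0 1^\ell$, which by the combinatorial analysis underlying Proposition~\ref{prop-liouville-V} adds extra box-indecomposables beyond the $g_\star$ count.

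The key computational step, then, is to feed the bound $q_b(n) \geq n+2$ (for $n \geq$ some threshold) through Proposition~\ref{prop-bin-to-box}: the number of box-indecomposables in $\C_{\phi(b)}^\boxplus$ of length $2k$ is $q_b(k) + q_b(k+1) \geq (k+2)+(k+3) = 2k+5$, and of length $2k+1$ is $2q_b(k+1) \geq 2k+6$, which term-by-term matches or exceeds the coefficients of $g_\star(z) = (2z - 2z^2 + 2z^3 + z^4 - 2z^5)/(1-z)^2$ for large enough degree (the coefficient of $z^n$ in $g_\star$ is exactly $n+3$ for $n\geq 4$, and $2k+5 \geq 2k+3 = n+3$ when $n=2k$, similarly $2k+6\geq 2k+4 = n+3$ when $n = 2k+1$). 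Finitely many initial discrepancies do not affect the growth rate. The main obstacle, as flagged above, is the eventually-periodic case, where $q_b$ is only eventually constant rather than strictly increasing: there I need to argue separately that $q_b(m) > m+1$ already forces enough extra recurrent factors — and hence enough extra box-indecomposables at all large lengths via the periodicity — to push the growth rate up to at least $\mu$; I expect this reduces to observing that an eventually periodic $b$ with $q_b(m) > m+1$ has, in its periodic tail, the Sturmian-or-larger factor count at every scale up to the period, which combined with periodicity forces $q_b(n) \geq n+1$ for \emph{all} $n$ (not just up to $m$), placing it in the same regime as the Sturmian case.
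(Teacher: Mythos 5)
Your argument works in the non--eventually-periodic case, where strict monotonicity of $q_b$ gives $q_b(n)\geq n+2$ from $m$ onward and a direct term-by-term comparison with $g_\star$ goes through. But your proposed resolution of the eventually periodic case is incorrect, and this is precisely the case that requires the real idea. You assert that an eventually periodic $b$ with $q_b(m)>m+1$ is forced to have $q_b(n)\geq n+1$ for \emph{all} $n$, ``placing it in the same regime as the Sturmian case.'' That cannot happen: if $b$ is eventually periodic then $q_b$ is eventually constant (all recurrent factors live in the periodic tail), so $q_b(n)\geq n+1$ must fail for all sufficiently large $n$. Consequently the count of box-indecomposables in $\C_{\phi(b)}^\boxplus$ is eventually constant as well, while the coefficients of $g_\star$ grow linearly --- so term-by-term domination of $g_\star$ is simply not available in this case, and your comparison mechanism breaks down.

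The paper handles this by a different kind of comparison. From $q_b(m)>m+1$ and monotonicity of $q_b$ together with Lemma~\ref{lem-q-constant-forever}, one extracts a $k>1$ with $q_b(n)\geq s_k(n)$ for \emph{all} $n$, where $s_k(n)=n+1$ for $n<k$ and $s_k(n)=k+2$ for $n\geq k$; note the jump of $2$ at $n=k$. Pushing $s_k$ through Proposition~\ref{prop-bin-to-box} gives a generating function $g_s(z)$ whose coefficients are eventually \emph{smaller} than those of $g_\star$, so a coefficient-wise argument cannot work. Instead one computes
\[
g_s(z)-g_\star(z)=z^{2k-2}\,\frac{1-2z^2}{(1-z)^2}\geq 0 \quad\text{for } z\in[0,\tfrac12],
\]
and concludes $g_b(z)\geq g_s(z)\geq g_\star(z)$ on $[0,\tfrac12]$, so the smallest positive root of $g_b(z)=1$ is at most $1/\mu$. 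This interval-wise comparison of generating function \emph{values}, rather than coefficients, is the step your proposal is missing; without it the eventually periodic case is left open.
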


\begin{proof}
First, recall that the sequence of $\boxplus$-indecomposable permutations generated by a Sturmian sequence via the mapping $\phi$ has the generating function
\[
g_\star(z) = \frac{2z-2z^2+2z^3+z^4-2z^5}{(1-z)^2}.
\]
Note that $g_\star(1/\mu) = 1$, and $z=1/\mu \approx 0.30462$ is the smallest positive real solution of $g_\star(z)=1$. 

For each $k>1$, set
\[s_k(n) = \begin{cases}
 n+1 & n < k\\
 k+2 & n \geq k.
\end{cases}
\]
Consider any binary sequence $b$ which satisfies the hypothesis of the proposition. Note that $b$ must have recurrent complexity satisfying $q_b(n) \geq s_k(n)$ for some $k>1$ and for all $n$. (The case $k=1$ can be excluded, since $q(1) \leq 2$ for any binary sequence.)

Suppose first that $k\geq 4$ (we will handle small cases separately later). Let $g_b(z)$ denote the generating function for the non-empty $\boxplus$-indecomposable permutations in $\C_{\phi(b)}^\boxplus$. By applying Proposition~\ref{prop-bin-to-box} to the sequence $s_k(n)$, we see that
\[
[z^n]g_b(z) \geq \begin{cases}
 	2&n\leq 2\\
 	4&n=3\\
 	n+3& 4\leq n \leq 2k-3\\
 	2k+2&n=2k-2\\
 	2k+4&n\geq 2k-1.
\end{cases}
\]
The generating function for the coefficients on the right hand side of the above expression is
\[
g_s(z)=\frac{2z-2z^2+2z^3+z^4-2z^5 + z^{2k-2}-2z^{2k} }{(1-z)^2}.
\]
Importantly, we have that
\[
g_s(z) - g_\star(z) = \frac{z^{2k-2}-2z^{2k}}{(1-z)^2} = z^{2k-2}\frac{1-2z^2}{(1-z)^2} \geq 0 \text{ for }0\le z\le\tfrac12.
\]
Thus for $0\le z\le\tfrac12$ we see that $g_b(z) \geq g_s(z) \geq g_\star(z)$. Now $z=1/\mu\approx 0.30562$ is the smallest positive real solution to $g_\star(z)=1$, and since $g_b(z) \geq g_\star(z)$ for $0\le z\le\tfrac12$ it follows that the smallest positive real solution to $g_b(z)=1$ is at most $1/\mu$. This value of $z$ is the reciprocal of $\gr\left(\C_{\phi(b)}^\boxplus\right)$, from which we see that
\[
\gr(\C_{\phi(b)}) = \gr\left(\C_{\phi(b)}^\boxplus\right) \geq \mu.
\]
We now briefly cover the cases $k=2$ and $k=3$. When $k=2$, the sequence of $\boxplus$-indecomposable permutations in $\C_{\phi(b)}^\boxplus$ dominates the sequence $2,2,6,8,8,8,\dots$, which has generating function $(2z+4z^3+2z^4)/(1-z)$. The solution to $(2z+4z^3+2z^4)/(1-z)=1$ is $z\approx 0.29433$, and therefore $\gr\left(\C_{\phi(b)}^\boxplus\right) \geq 1/0.29434 \approx 3.397 > \mu$.

Similarly, when $k=3$, the sequence of $\boxplus$-indecomposables is at least $2,2,4,8,10,10,\dots$, and this yields a class whose growth rate is at least $3.310 > \mu$.
\end{proof}

The following theorem now completes the classification of small pin sequences.

\begin{thm}\label{thm-classification}
Let $w$ be an infinite pin sequence such that $\gr(\C_w) < \mu$. Then $w$ is eventually periodic, and either $\gr(C_w) =\kappa$, or $\gr(C_w) = \nu_{\ell}$ for some $\ell \geq 1$.
\end{thm}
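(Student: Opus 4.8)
The plan is to assemble the pieces already established in Sections~\ref{sec-pin-class-structure}--\ref{sec-classification} into a clean case analysis governed by how many quadrants $w$ visits arbitrarily often. First I would invoke Propositions~\ref{prop-4-cells} and~\ref{prop-3-cells}: if $w$ visits three or four quadrants arbitrarily often, then $\lgr(\C_w^\gridded)\geq 3.28481 > \mu$, contradicting $\gr(\C_w)<\mu$. Hence $w$ visits at most two quadrants arbitrarily often. If it visits zero or one quadrant arbitrarily often, then $w$ has only finitely many visits, and Theorem~\ref{thm-finite-visits-gr} gives $\gr(\C_w)=\kappa$; this is the first conclusion. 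So the substantive case is that $w$ visits exactly two quadrants arbitrarily often.

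In that case, by symmetry and Proposition~\ref{prop-kill-finite-visits} we may assume $w$ visits only quadrants $1$ and $2$, and then (deleting an initial $\sful$ or $\sfur$ if necessary, using Proposition~\ref{prop-truncate-gr}) that $w=\phi(b)$ for some infinite binary sequence $b$, without changing $\gr(\C_w)$. Now Lemma~\ref{lem-sub-mu-periodic} applies: since $\gr(\C_{\phi(b)})<\mu$, the sequence $b$ is eventually periodic, so write $b = b'c^*$ for a finite word $b'$ and a primitive word $c$; by Proposition~\ref{prop-truncate-gr} we may assume $b=c^*$ outright. Because $w$ visits both quadrants arbitrarily often, $c$ contains both a $0$ and a $1$, so after a cyclic rotation (which only changes the finite prefix, again harmless by Proposition~\ref{prop-truncate-gr}) we may write $c = 0^{k_1}1^{\ell_1}\cdots 0^{k_r}1^{\ell_r}$ for positive integers $k_i,\ell_i$.

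The remaining work is to show that $\gr(\C_{\phi(c^*)})<\mu$ forces $c^* = b_{1,\ell}$ up to symmetry, i.e. $r=1$ and $\min(k_1,\ell_1)=1$; then Lemma~\ref{lem-nu-1-ell} (together with Proposition~\ref{prop-gr-nu} for $\ell=1$) identifies the growth rate as $\nu_\ell$ and finishes the proof. For this I would argue via the recurrent complexity $q_b=p_b$ (since $b$ is periodic it is recurrent) and Proposition~\ref{prop-not-above-sturmian}: if $p_b(n)>n+1$ for some $n$ then $\gr(\C_{\phi(b)})\geq\mu$, contradiction, so $p_b(n)\leq n+1$ for all $n$. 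A periodic binary word with $p_b(n)\leq n+1$ for all $n$ has a very restricted shape — in fact $p_b$ must be bounded, say $p_b(n)=m+1$ for $n\geq m$, and one checks combinatorially that a periodic word whose complexity is eventually constant at value $m+1$ and which never exceeds $n+1$ must be (up to symmetry and rotation) exactly $(0^m1)^*$ or, when we also allow both $00$ and $11$ to be barred, the shorter cases; the constraint $p_b(2)\leq 3$ already rules out $c$ containing both $00$ and $11$, which by Proposition~\ref{prop-w-2-2-gr} would push the growth rate to $\nu_{2,2}>\mu$ anyway. Thus $c$ has the form $0^k1$ or $01^\ell$ up to symmetry, i.e. $b=b_{1,\ell}$, and we are done.

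The main obstacle is the final combinatorial step: pinning down exactly which periodic binary sequences satisfy $p_b(n)\leq n+1$ for all $n$. The cleanest route is probably to note that $p_b(1)\in\{1,2\}$ and $p_b(2)\leq 3$ already force $c$ (up to symmetry) to avoid one of $00$, $11$ — say $00$ — so $b\in\{(01^\ell)^*\}$ after grouping; then one directly verifies from Lemma~\ref{lem-nu-1-ell}'s complexity computation that $p_{b_{1,\ell}}(n)=n+1$ for $n\leq\ell$ and $=\ell+1$ thereafter, which is consistent, whereas any block structure with $r\geq 2$ (two or more maximal runs of $1$s separated by a single $0$, e.g. $01^a01^b$ with $a\neq b$, or with an internal $00$) violates $p_b(n)\leq n+1$ for suitable $n$. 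Alternatively, one can bypass the structural classification entirely by observing that among two-quadrant periodic sequences only the $w_{k,\ell}$ and their symmetries are candidates once $00$ and $11$ cannot both recur, and then the table following Lemma~\ref{lem-nu-1-ell} shows $\nu_{1,\ell}<\mu<\nu_{k,\ell}$ for $k,\ell\geq 2$, isolating exactly $\nu_\ell=\nu_{1,\ell}$.
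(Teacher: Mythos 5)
Your proposal follows the same global skeleton as the paper (reduce to two quadrants via Propositions~\ref{prop-4-cells} and~\ref{prop-3-cells}, pass to a binary sequence via $\phi$, invoke Lemma~\ref{lem-sub-mu-periodic} for eventual periodicity, reduce to the periodic case via Proposition~\ref{prop-truncate-gr}, and use Proposition~\ref{prop-not-above-sturmian} to bound the complexity by $n+1$). The divergence -- and the genuine gap -- is in the final step. You attempt to upgrade the complexity bound to a \emph{structural} classification: you claim that any periodic binary word with $p_b(n)\leq n+1$ for all $n$ must be (up to symmetry and rotation) $(0^\ell 1)^*$. This is false. For example, the periodic word $b=(00101)^*$ has complexity $p_b(n)=\min(n+1,5)$ for all $n\geq 1$, exactly matching $b_{1,4}=(01111)^*$, yet $(00101)$ and $(00001)$ are not cyclic rotations of one another (they do not even have the same number of $1$s), nor are they related by complementation or reversal. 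In general the periodic binary words satisfying $p_b(n)\leq n+1$ are the periodic \emph{balanced} words (conjugates of powers of Christoffel words), which form a strictly larger family than the $b_{1,\ell}$ and their symmetries. Your alternative bypass at the end -- restricting attention to the $w_{k,\ell}$ once $00$ and $11$ cannot both recur -- suffers from the same problem, since $(00101)^*$ avoids $11$ but is not of the form $b_{k,\ell}$.

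The paper's proof avoids this trap entirely by never claiming anything about the structure of $b$. After Proposition~\ref{prop-not-above-sturmian} gives $p_b(n)\leq n+1$, it applies Lemma~\ref{lem-constant-forever} together with Proposition~\ref{prop-p-increasing} to conclude that $p_b$ must have the exact form $p_b(n)=\min(n+1,\ell+1)$ for some $\ell$: since $p_b(1)=2$ and $p_b$ is increasing, the first time it fails to increase it becomes constant forever, and the upper bound $n+1$ then pins it down exactly. The growth rate is then a function of $p_b$ alone, via Proposition~\ref{prop-bin-to-box} and the generating function $g_{1,\ell}(z)$ computed in Lemma~\ref{lem-nu-1-ell}. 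Your final conclusion happens to be correct -- the growth rate is indeed $\nu_\ell$ even when $b$ is, say, $(00101)^*$ rather than $(01111)^*$ -- but your justification for it is not, because it rests on a false structural claim. The repair is straightforward: drop the attempt to identify $b$ with $b_{1,\ell}$, and argue purely at the level of the complexity function, as the paper does.
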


\begin{proof}
By the remarks at the beginning of this section, we can restrict our attention to pin sequences $w$ that visit at most two quadrants, and thus $w=\phi(b)$ for some binary word $b$. Lemma~\ref{lem-sub-mu-periodic} shows that $b$ is eventually periodic, and note that $b$ is eventually periodic if and only if $w$ is eventually periodic. 

To establish the growth rate of $\C_w$, note that by Proposition~\ref{prop-truncate-gr} we can assume that $w$ is in fact periodic. Thus $p_b(n)=q_b(n)$ for all $n$, and the resulting class $\C_{\phi(b)}$ is $\boxplus$-closed. 
Proposition~\ref{prop-not-above-sturmian} shows that a binary sequence $b$ for which $\gr(\C_{\phi(b)})<\mu$ has a complexity function that satisfies $p_b(n) \leq n+1$ for all $n$, and Lemma~\ref{lem-constant-forever} now implies that there exists $\ell\geq 2$ such that 
\[
p_b(n) = \begin{cases}
 n+1 & n<\ell\\
 \ell+1 &n\geq \ell.	
 \end{cases}
\]
The statement and proof of Lemma~\ref{lem-nu-1-ell} now complete the proof: the generating function for the $\boxplus$-indecomposable permutations in $\C_{\phi(b)}=\C_{\phi(b)}^\boxplus$ is $g_{\phi(b)} = g_{1,\ell}(z)$, and consequently $\gr(\C_{\phi(b)}) =\nu_{\ell}$.
\end{proof}

%
%
%
%
%
%
%
%
%
%

%
%
%
%
%
%
%
%

%
%
%
%
%
%
%
%
\section{Concluding remarks}\label{sec-conclusion}

At first sight, the phase transition we have established at $\mu$ might
 appear to bear some resemblance to the one at $\kappa$ (as observed by~\cite{vatter:small-permutati:}). For example, $\kappa\approx 2.20557$ is the first growth rate of uncountably many classes, while $\mu$ is the first growth rate of uncountably many \emph{pin} classes. However, they do also display significant differences; note that for any $\epsilon >0$, there are only finitely many growth rates of pin classes less than $\mu-\epsilon$, yet the first accumulation point of permutation classes occurs at growth rate 2 (which is the limit of the growth rates of permutation classes enumerated by the generalised Fibonacci numbers), and in fact $\kappa$ is an accumulation point of accumulation points. 
 
 In fact, $\mu$ has one further curious property: it is also an accumulation point of growth rates \emph{from above} as the following family of pin classes shows. For $k\geq 2$, consider the pin sequence $w_k = \left((\sfr\sfu)^k\sfl\sfu(\sfr\sfu)^{2k}\right)^\infty$. This generates a pin class whose sequence of $\boxplus$-indecomposable permutations has generating function
 \[
 g_k(z) = \frac{2z-2z^2+2z^3+z^4-2z^5+z^{2k+2}-z^{4k}-z^{4k+4}}{(1-z)^2}
 \]
 It is clear that the series expansion of $g_k(z)$ approaches that of $g_\star(z)$ as $k\to\infty$, and straightforward to verify that the solutions to $g_k(z)=1$ are smaller than $1/\mu$, and monotone increasing with $k$. Thus, the growth rate of the class $\C_{w_k}$ exceeds $\mu$, but approaches $\mu$ as $k\to\infty$.
 
 \acknowledgements We are grateful to the referees' careful reading of an earlier draft, which improved the readability of the paper.
 

\def\cprime{$'$}

\end{document}